\numberwithin{equation}{section}
\numberwithin{figure}{section}
\theoremstyle{plain}
\newtheorem{thm}{\protect\theoremname}[section]
\theoremstyle{remark}
\newtheorem{rem}[thm]{\protect\remarkname}
\theoremstyle{definition}
\newtheorem{definition}[thm]{\protect\definitionname}
\theoremstyle{plain}
\newtheorem{lem}[thm]{\protect\lemmaname}
\theoremstyle{plain}
\newtheorem{cor}{Corollary}[thm]
\providecommand{\definitionname}{Definition}
\providecommand{\factname}{Fact}
\providecommand{\lemmaname}{Lemma}
\providecommand{\remarkname}{Remark}
\providecommand{\theoremname}{Theorem}
\newcommand\bE{\mathbb{E}}
\newcommand\bP{\mathbb{P}}
\title[Curvature-driven manifold fitting]{Curvature-driven manifold fitting under unbounded isotropic noise}
\author{Ruowei Li and Zhigang Yao}
\address{Zhigang Yao: Department of Statistics and Data Science, National University of Singapore, 21 Lower Kent Ridge Road, Singapore 117546;
Center of Mathematical Sciences and Applications, Harvard University, 20 Garden Street, Cambridge USA 02138}
\email{zhigang.yao@nus.edu.sg; zhigang.yao@cmsa.fas.harvard.edu}
\address{Ruowei Li: Department of Statistics and Data Science, National University of Singapore, 21 Lower Kent Ridge Road, Singapore 117546}
\email{ruoweili@nus.edu.sg}
\begin{document}

\maketitle
\begin{abstract}
Manifold fitting aims to reconstruct a 
low-dimensional manifold from high-dimensional data, whose framework is established by Fefferman et al. \cite{fefferman2020reconstruction,fefferman2021reconstruction}. 
This paper studies the recovery of a compact \(C^3\) 
submanifold \(\mathcal{M} \subset \mathbb{R}^D\) with dimension $d<D$ and positive reach \(\tau\) from observations \(Y = X + \xi\), where \(X\) is uniformly distributed on \(\mathcal{M}\) and \(\xi \sim \mathcal{N}(0, \sigma^2 I_D)\) denotes isotropic Gaussian noise.  
To project any points $z$ in a tubular neighborhood \(\Gamma\) of \(\mathcal{M}\) 
onto \(\mathcal{M}\),
we construct a sample-based 
estimator $F:\Gamma\to\mathbb{R}^D$ by a normalized local kernel with the theoretically derived bandwidth \(r = c_D\sigma\). 
Under a sample size of \(O(\sigma^{-3d-5})\), we establish with high probability the uniform asymptotic expansion  
\[
F(z) = \pi(z) + \frac{d}{2} H_{\pi(z)} \sigma^2 + O(\sigma^3), \qquad z \in \Gamma,
\]
where \(\pi(z)\) is the projection of \(z\) onto \(\mathcal{M}\) and \(H_{\pi(z)}\) is the mean curvature vector of \(\mathcal{M}\) at \(\pi(z)\). 
The resulting manifold \(F(\Gamma)\) has reach bounded below by \(c \tau\) for \(c>0\) and achieves a state-of-the-art Hausdorff distance of \(O(\sigma^2)\) to \(\mathcal{M}\).
Numerical experiments confirm the quadratic decay of the reconstruction error and demonstrate the computational efficiency of the estimator $F$. Our work provides a curvature-driven framework for denoising and reconstructing manifolds with second-order accuracy.

\end{abstract}

\section{Introduction}



%
Real-world data often exhibit high-dimensional observations with few intrinsic degrees of freedom. This aligns with the manifold hypothesis, which states that such data in $\mathbb{R}^D$ lie near a manifold $\mathcal{M}$ with dimension $d<D$. 
To recover this latent structure, manifold learning methods construct low-dimensional embeddings where Euclidean distances approximate geodesic ones, as seen in
 Isometric Mapping \cite{tenenbaum2000global}, Locally Linear Embedding \cite{roweis2000nonlinear, donoho-hessian-lle}, Laplacian Eigenmaps \cite{belkin2003laplacian}. 
In practice, data are contaminated by noise from measurement or transmission.
Manifold fitting aims to reconstruct a smooth \(\mathcal{M}\) from noisy samples while preserving its intrinsic geometry and topology. 
Applications include denoising single-cell data \cite{yao2024single}, refining RNA structural models \cite{wu2025principal}, recovering data in Cryo-electron microscopy and nanophotonic light detection \cite{kim2021nanophotonics}, and wind-direction analysis \cite{dang2015wind}.
It also provides a foundation for manifold-aware generative models, such as GANs \cite{goodfellow2014generative} and manifold fitting with CycleGAN~\cite{yao2023manifold}.

\subsection{Geometric intuition}
In Riemannian geometry, the tangent space provides a first-order linear approximation to a smooth submanifold. A precise description of its local embedding requires second-order geometric information which is encoded in the mean curvature vector (Figure \ref{Fig:meancurvature}). Defined as the normalized trace of the second fundamental form,  or equivalently the average of the principal curvatures $\kappa_i$ (see Section  \ref{se:Geometric_quantities}). The mean curvature vector is the fundamental second-order extrinsic quantity characterizing the submanifold’s bending toward the normal space.
For instance, on a sphere it points inward, while on 
a saddle it vanishes.

\begin{figure}[ht]
    \centering   
    \includegraphics[width = 0.55\textwidth]{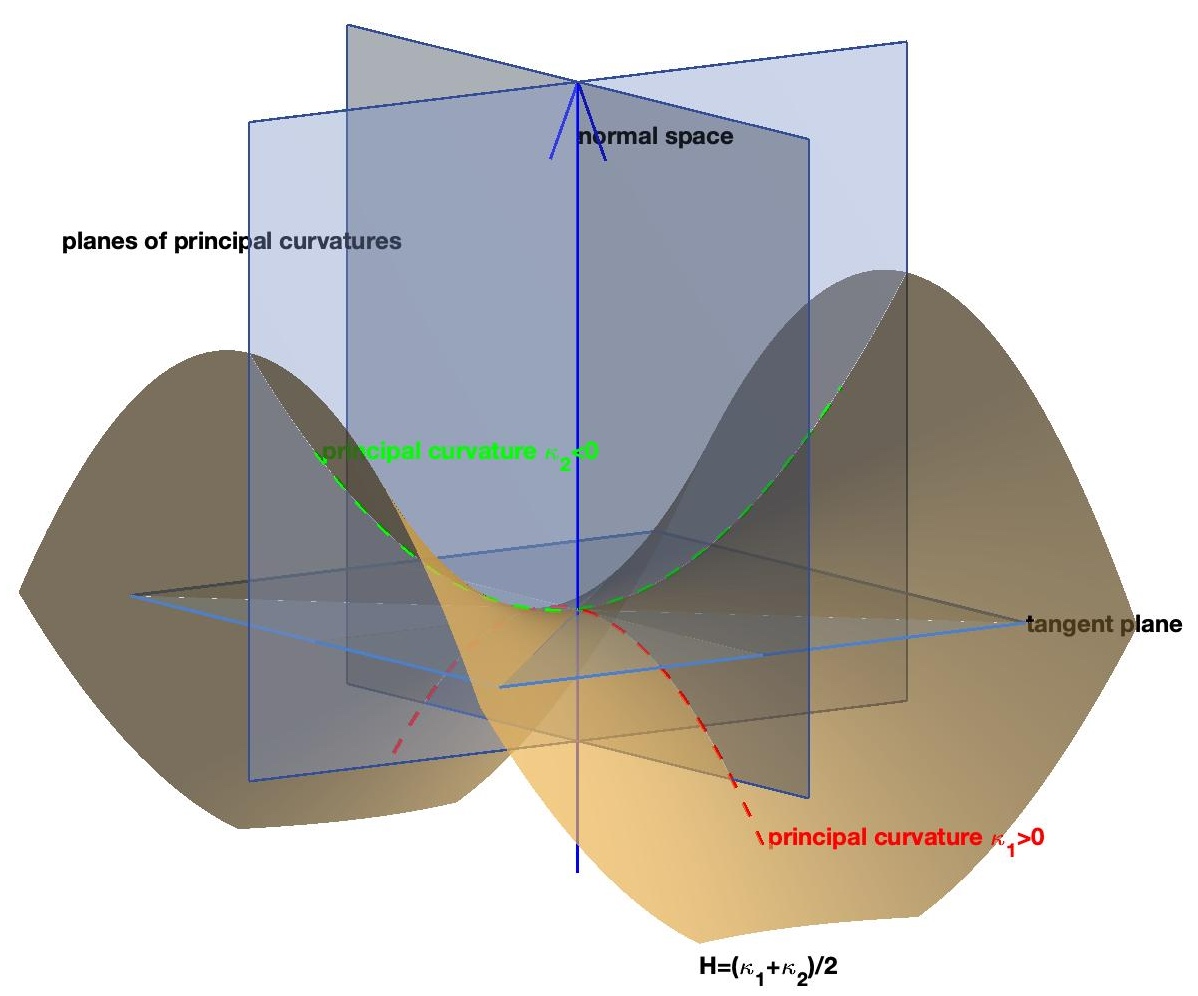}
    \caption{The mean curvature $H$ of a saddle. 
    }\label{Fig:meancurvature}
\end{figure}

Consider the additive Gaussian noise model 
\(Y = X + \xi\), where the random vector \(X\) is uniformly distributed on the \(d\)-dimensional manifold \(\mathcal{M} \subset \mathbb{R}^D,D>d\) and \(\xi \sim \mathcal{N}(0, \sigma^2 I_D)\) represents isotropic Gaussian noise. 
As shown in Figure \ref{fig:noise-samples},
when samples $x$ from the manifold $\mathcal{M}$ are perturbed by isotropic ambient noise (blue dashed circles), the density of noise samples  (simply seen as covered frequency) encodes geometric information of the underlying manifold. Intuitively, 
the mean curvature vector governs the distribution of the noise, resulting in a higher density in the inward normal direction compared to the outward direction (red dashed lines). This phenomenon is also discussed in the framework of ridge theory \cite{genovese2014nonparametric}. The symmetry of the density function within the tangent space along the principal curvature directions leads to a negligible bias. Therefore, the curvature influences the density of the noisy point \(y\), that is, \(p_\sigma(y)\). A precise mathematical characterization is provided in Theorem \ref{th:probability noisy} and Theorem \ref{th:score_noisy}, which distinguishes our work from \cite{genovese2014nonparametric}. This geometric insight guides the construction of our curvature-driven manifold fitting method.


\begin{figure}[htbp]
    \begin{tikzpicture}    
        \draw (0.2 ,0) node[above] {$\mathcal{M}$} arc (50:130:6); 
        
        \draw [dotted] (-8,1.4)--(-4,1.4) node[right] {$x$}--(0,1.4)  node[above] {$T_{x}\mathcal{M}$};
        \draw [blue, densely dotted] (-4,1.4) circle (0.8);
        \draw [blue, densely dotted] (-4.3,1.4) circle (0.8);
        \draw [blue, densely dotted] (-4.6,1.35) circle (0.8);
        \draw [blue, densely dotted] (-4.9,1.3) circle (0.8);
        \draw [blue, densely dotted] (-5.3,1.25) circle (0.8);
        
        \draw [blue, densely dotted] (-3.7,1.4) circle (0.8);
        \draw [blue, densely dotted] (-3.4,1.35) circle (0.8);
        \draw [blue, densely dotted] (-3.1,1.3) circle (0.8);
        \draw [blue, densely dotted] (-2.7,1.25) circle (0.8);

        \draw [blue, densely dotted] (-2.7,1.3)--(-1.95,1.52) node[above = 1pt, left = 2pt,black] {$\sigma$};
        
        \fill  (-4,1.4) circle (1pt);

        \draw [<->, densely dotted] (-7.5,0) -- (-7.5,1.4) node[below = 20pt, left = 0.1pt]{$O(r^2)$};
        \draw [<->, densely dotted] (-4,1.4) -- (-7.5,1.4) node[above=2pt, right = 30pt]{$r$};

        \draw [-, densely dotted] (-4,0) -- (-4,3) node[below = 5pt, right = 3pt]{$T_x^\bot \mathcal{M}$};

        \draw [->, densely dotted] (-4,1.4) -- (-4,0.2) node[below = 1pt, right = 3pt]{$H_x$};

        \draw [<->, red, densely dotted] (-4,0.6) -- (-4,2.2); 

        \draw [<->, red, densely dotted] (-3.75,0.6) -- (-3.6,2.19);
        \draw [<->, red, densely dotted] (-3.45,0.58) -- (-3.2,2.15);
        \draw [<->, red, densely dotted] (-3.2,0.53) -- (-2.85,2.07);
        \draw [<->, red, densely dotted] (-2.93,0.5) -- (-2.48,2);

        \draw [<->, red, densely dotted] (-4.25,0.6) -- (-4.4,2.19);
        \draw [<->, red, densely dotted] (-4.49,0.58) -- (-4.8,2.15);
        \draw [<->, red, densely dotted] (-4.75,0.55) -- (-5.15,2.07);
        \draw [<->, red, densely dotted] (-5.01,0.50) -- (-5.55,1.99);

    \end{tikzpicture}
    \caption{ 
    The figure illustrates samples corrupted by isotropic noise. Points \(x \in \mathcal{M}\) on the underlying submanifold are displaced by isotropic noise \(\sigma\) (blue dashed circles), and the dominant bias of the density of noise appears in the normal direction (red dashed lines). At \(x\), 
     \(T_x \mathcal{M}\), $T_x^\bot \mathcal{M}$ and $H_x$ denote the tangent space, normal space  and mean curvature vector respectively.
     At scale \(r\), \(\mathcal{M}\) deviates from \(T_x \mathcal{M}\) in the normal direction by order  \(O(r^2)\).
}\label{fig:noise-samples}
\end{figure}

\subsection{Current manifold fitting methods} 
We introduce two fundamental techniques in current manifold fitting methods \cite{genovese2014nonparametric,mohammed2017manifold,fefferman2018fitting,yao2019manifold,fefferman2021fitting}. The first technique  (Figure \ref{Fig:PreviousWork}(a)) applies local Principal Component Analysis (PCA) \cite{Tyagi2013Tangent} to samples within a Euclidean ball of radius \(r\) centered on an observed sample \(y\). It identifies the \(d\)-dimensional subspace by minimizing the sum of squared distances to all samples lying in this neighborhood, yielding an estimated tangent space \(\widehat{T_{y}\mathcal{M}}\) as an approximation to the true tangent space \(T_{\pi(y)}\mathcal{M}\) at the projection \(\pi(y)\).
This approach introduces two intrinsic errors:
\[d(\mathcal{M},\widehat{T_{y}\mathcal{M}})=d(\mathcal{M},T_{\pi(y)}\mathcal{M})+d(T_{\pi(y)}\mathcal{M},\widehat{T_{y}\mathcal{M}}).\]
First, the local linear approximation of the manifold by a tangent space incurs a curvature-induced normal deviation of order \(O(r^2)\), as depicted in Figure \ref{fig:noise-samples}. Hence, the principled guidance for selecting the critical scale $r$ or the number $k$ of \(k\)-Nearest Neighbors (\(k\)-NN) is key. Second, the estimated tangent space \(\widehat{T_{y}\mathcal{M}}\) at the noisy point \(y\) inherently differs from the tangent space \(T_{\pi(y)}\mathcal{M}\) at the latent projection point $\pi(y)$. These biases propagate into  subsequent steps such as the estimation of normal spaces~\cite{yao2019manifold} and density functions \cite{yao2023manifold}.
 The second technique (Figure \ref{Fig:PreviousWork}(b)) relies on local averaging of samples within spherical or cylindrical neighborhoods to reduce sampling variability. It is commonly implemented via mean shift \cite{Fukunaga1975} or kernel density estimation (KDE) \cite{Parzen1962}.
The choice of bandwidth $r_i$ is equally critical: a scale too small fails to suppress variance, while one too large introduces geometric bias. A theoretically grounded and precise method for bandwidth selection remains a challenge. 

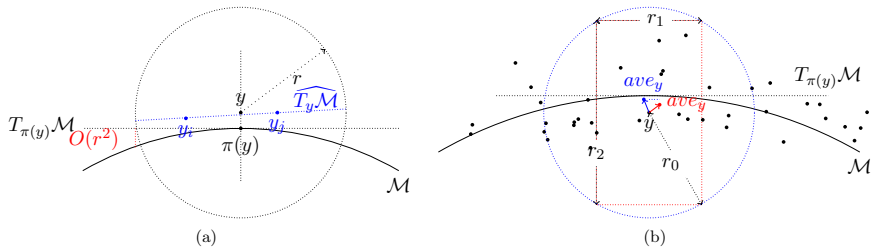
\begin{figure}[htbp]
    \centering
    \resizebox{.95\textwidth}{!}{
        \subfigure[]{
            \begin{tikzpicture}
                \draw (3,-0.8040) node[below] {$\mathcal M$} arc (60:120:6);
                \draw [densely dotted] (3,0) --(-3,0) node[above=0pt, left=0pt] {$T_{\pi (y)}\mathcal M$};
                \draw [red, densely dotted] (-2,0) --(-2,-0.38) node[above=5pt, left=5pt] {$O(r^2)$};
                \draw [densely dotted] (0,0.3) circle (2);
                \fill (0,0) node[below]{$\pi (y)$} circle (1pt);
                \fill (0,0.3) node[above]{$y$} circle (1pt);
                \draw [densely dotted] (0,-1) -- (0,1.5);
                \draw [dotted,->] (0,0.3) -- (1.599,1.5) node[left = 15pt,below=10pt]{$r$};
                
                \fill [blue] (-1.041889, 0.19115348) node[below]{$y_i$} circle (1pt);
                \draw [blue,densely dotted] (-2, 0.1482578) -- (2,0.3688866)
                node[above=5pt, left=0pt] {$\widehat {T_{y}\mathcal M}$};;

                \fill [blue] (0.695935336485, 0.30) node[below]{$y_j$} circle (1pt);
            \end{tikzpicture}
            \label{Fig:locPCA}
        }

        \subfigure[]{
            \begin{tikzpicture}    
                \draw (0 ,0) node[below] {$\mathcal M$} arc (60:120:8);
                \draw [densely dotted] (-6.9,1.0718) --(-4,1.0718) --(-0.6,1.0718) node[above] {$T_{\pi (y)}\mathcal M$};
                \draw[red, densely dotted] (-5,-1) rectangle (-3.,2.5);
                \draw[densely dotted, <->] (-5,2.5) --(-3,2.5) node[left = 16pt,fill=white] {$r_1$};
                \draw[densely dotted, <->] (-5,2.5) --(-5,-1) node[above = 21pt, fill=white] {$r_2$};
                \draw[densely dotted, <->] (-4,0.75) --(-3,-1) node[above = 22pt, left=8pt, fill=white] {$r_0$};
                \fill (-2.9811,0.6126) circle (1pt);
                \fill (0.1380,0.7562) circle (1pt);
                \fill (-5.1115,0.0816) circle (1pt);
                \fill (-5.0900,0.5550) circle (1pt);
                \fill (-1.4464,0.1926) circle (1pt);
                \fill (-1.7007,1.3119) circle (1pt);
               \fill (-5.9252,0.2342) circle (1pt);
              \fill (-4.9937,0.3653) circle (1pt);\fill (-0.1001,0.2172) circle (1pt);\fill (-0.7597,0.9058) circle (1pt);\fill (-3.8094,2.1171) circle (1pt);\fill (-6.0577,0.9415) circle (1pt);\fill (-4.1842,0.5993) circle (1pt);\fill (-1.7753,0.7675) circle (1pt);\fill (-0.9972,0.9182) circle (1pt);\fill (-6.5370,1.6932) circle (1pt);\fill (-3.7693,1.4801) circle (1pt);\fill (-5.3435,0.6265) circle (1pt);\fill (-3.4319,0.7174) circle (1pt);\fill (-5.9580,0.5476) circle (1pt);\fill (-2.8541,1.2264) circle (1pt);\fill (-0.2795,0.4803) circle (1pt);\fill (-0.6471,0.6389) circle (1pt);\fill (-3.2643,0.6250) circle (1pt);\fill (-2.4899,0.6930) circle (1pt);\fill (-2.0460,1.2104) circle (1pt);\fill (-3.3017,2.2170) circle (1pt);\fill (-2.5288,0.4695) circle (1pt);\fill (-5.1595,0.9918) circle (1pt);\fill (-4.5391,1.6005) circle (1pt);\fill (-7.3842,0.3415) circle (1pt);\fill (0.0170,0.4563) circle (1pt);\fill (-7.3544,0.5188) circle (1pt);\fill (-5.9907,0.3842) circle (1pt);\fill (-5.4281,0.5091) circle (1pt);\fill (-6.2409,1.2324) circle (1pt);\fill (-3.7367,1.5424) circle (1pt);\fill (-5.4203,0.5675) circle (1pt);
                
                \fill (-4,0.75) node[below]{$y$} circle (1pt);
                \draw [blue, densely dotted] (-4,0.75) circle (2);
                \fill [blue] (-4.1,1) node[above]{$ave_y$} circle (1pt);
                \fill [red] (-3.8,0.9) node[below,right]{$ave_y$} circle (1pt);
                \draw [blue,densely dotted] (-4,1) -- (-3.8,1);
                \draw [blue,->] (-4,0.75) -- (-4.1,1);    
                \draw [red,->] (-4,0.75) -- (-3.8,0.9); 
            \end{tikzpicture}
            \label{Fig:localmean}
        }
    }
    \caption{(a) Local PCA: For an observed point \(y\), its projection \(\pi(y)\) on \(\mathcal{M}\), the tangent space \(T_{\pi(y)}\mathcal{M}\), and the estimated tangent space \(\widehat{{T}_{y}\mathcal{M}}\) constructed from neighboring points \(y_i, y_j\); (b) Local averaging: The average points \( ave_y\) of observed samples (black dots) within a spherical (blue) or cylindrical (red) neighborhood centered at \(y\).}
    \label{Fig:PreviousWork}
\end{figure}
Then we present the existing manifold fitting methods in detail. Methods based on Delaunay triangulation assume noiseless samples \cite{cheng2005manifold,boissonnat2009manifold}, while deconvolution-based estimators achieve minimax rates under restrictive noise models \cite{genovese2012minimax,genovese2012manifold}. Later, ridge 
estimation via KDE was proposed in \cite{genovese2014nonparametric}. 

The method \cite{mohammed2017manifold} estimates the manifold via an approximate squared-distance function derived from noiseless samples using KDE and local PCA. 
The follow-up work \cite{fefferman2018fitting} introduces a method robust to ambient noise $\sigma$. It constructs an approximate distance function by successively estimating the local normal space from noisy samples via local PCA. The manifold estimator is defined as its zero set, achieving a Hausdorff error of \(O(\sigma)\) and a lower-bounded of reach (see Definition  \ref{def:reach}) with high probability.

The approach in \cite{yao2019manifold} employs the local sample mean to reference the underlying manifold and estimates the normal space via a locally weighted average of normal spaces on samples. Although its theoretical error remains \(O(\sigma)\), this technique substantially reduces computational complexity. 

In \cite{fefferman2021fitting}, point refinement reduces sampling bias by averaging within hyper-cylinders around initial estimates. Integrating this into the framework of \cite{fefferman2018fitting} produces an estimator with Hausdorff error \(O(\sigma^2)\) and a lower-bounded reach. However, the refinement assumes access to noiseless samples from the latent manifold, contradicting the noisy observation model.

The work \cite{yao2023manifold} proposes a two-stage estimator with a sample complexity \(O(\sigma^{-(d+3)})\). A spherical local mean determines projection directions, which then define cylindrical domains where a second local mean to obtain the manifold estimate. This estimator achieves a Hausdorff distance of \(O(\sigma^2\log(1/\sigma))\) and a lower-bounded reach with high probability, though a uniform estimation remains unestablished. 

In this paper, we demonstrate that a local radial mean shift with the theoretically derived bandwidth \(r = c_D \sigma\) effectively projects noisy samples toward the underlying manifold under the additive Gaussian noise model (Figure \ref{fig:estimatorF}). The estimator achieves a state-of-the-art error of \(\frac{d}{2} H_{\pi(z)} \sigma^2\), which is the second-order bias induced by the mean curvature. Thus, our framework performs not merely denoising but a statistical inversion of the data-generating geometry.

\subsection{Main results}

We assume $\mathcal{M}$ is a compact, $C^3$-smooth $d$-dimensional submanifold of $\mathbb{R}^D$, and ambient space $\mathbb{R}^D$ is equipped with the Euclidean metric $g_E$ and $L^2$-norm  $\|\cdot\|$. Suppose $\mathcal{M}$ has finite volume $\operatorname{Vol}(\mathcal{M})=\int_\mathcal{M}d\mu$ and positive reach $\tau>0$. Consider a random vector $Y \in \mathbb{R}^D$ that can be expressed as
\begin{equation}
    \label{eq:Add_model}
    Y = X + \xi,
\end{equation}
where $X \in \mathbb{R}^D$ is an unobserved random vector following the uniform distribution $\omega$ with respect to the induced volume measure $d\mu$ on $\mathcal{M}$, and $\xi \sim \phi_\sigma$ represents the $D$-dimensional isotropic Gaussian noise in ambient-space, 
that is,
$$\phi_\sigma (\xi)= (\frac{1}{2\pi \sigma^2})^{\frac{D}{2}}e^{-\frac{\|\xi\|^2}{2\sigma^2}}.$$ 
The density of $Y$ can be viewed as the convolution of $\omega$ and $\phi_\sigma$, 
\begin{equation}
    \label{eq:def:nu}
    p_\sigma (y) = \int_\mathcal{M} \phi_\sigma(y-x)\omega(x)\,d\mu(x).
\end{equation}



 Assume observed samples $\{y_i\}_{i=1}^N \subset \mathbb{R}^D$ being $N$ independent and identical realizations of $Y$.
Let $\varphi_r:\mathbb{R}^D\to(0,\infty)$ be a smooth compact supported rotation-invariant kernel with bandwidth $r>0$, for example, a truncated Gaussian kernel
\begin{equation}\label{eq:truncated Gaussian kernel}
    \varphi_r(z) =\phi_r (z) \chi\!\big(\tfrac{\|z\|}{\sqrt{2}r}\big)=({2\pi r^2})^{-\frac{D}{2}}e^{\frac{-\|z\|^2}{2r^2}}\,\chi\!\big(\tfrac{\|z\|}{\sqrt{2}r}\big),
\end{equation}
where \(\chi:[0,\infty)\to[0,1]\) is a smooth radial function
satisfying
\[
\chi(t)=1\ \text{ for } 0\le t\le \rho_0<1,\qquad
\chi(t)=0\ \text{ for } t\ge 1.
\]
Then $\varphi_r\in C_0^{\infty}(B_D(0,\sqrt{2}r))$. Let $$r=c_D\sigma,\quad c_D:=(\tfrac{\int_{B_D(0,1)} \varphi_1(\sqrt{2}u) \,du}{2\int_{B_D(0,1)} (u^{(1)})^2 \varphi_1(\sqrt{2}u)\, du})^{\frac{1}{2}},$$
then we can construct a local mean shift estimator
$F:\mathbb{R}^D\to\mathbb{R}^D$ as
\begin{equation}\label{eq:def:F(z)}
F(z):=\frac{\sum_{j=1}^{N}\varphi_r(y_j-z)y_j}{\sum_{j=1}^{N}\varphi_r(y_j-z)}. 
\end{equation}

\begin{figure}[htbp]
    \begin{tikzpicture}    
        \draw (0.2 ,0) node[above] {$\mathcal{M}$} arc (50:130:6); 
        
        \draw [dotted] (-8,1.4)--(-4,1.4) node[left] {$\pi(z)$}--(0,1.4)  node[above] {$T_{\pi(z)}\mathcal{M}$};
        \draw [dotted] (-4,2) node[above] {$z$} --(-4,1.0718);
        
        \fill [blue] (-4,2) circle (1pt);
        \fill [blue] (-4,1.4) circle (1pt);
        
        \fill [blue] (-4.05,1.2) node[below = 3pt] {$F(z)$} circle (1pt); 
        
        \draw [blue, densely dotted] (-4,2) circle (2);
        
        \draw [->, blue, densely dotted] (-4,2) -- (-5.7,3) node[below = 11pt, right = 3pt]{$r$};
        \draw [->, thick] (-4,1.4) -- (-4,2) node[below = 11pt, right = 3pt]{$v_z$};

        \fill (-2.9811,0.9126) circle (1pt); \fill (0.1380,0.7562) circle (1pt); \fill (-5.1115,0.0816) circle (1pt); \fill (-5.0900,0.5550) circle (1pt);
            \fill (-1.4464,0.1926) circle (1pt); \fill (-1.7007,1.3119) circle (1pt); \fill (-5.9252,0.2342) circle (1pt); \fill (-4.9937,0.3653) circle (1pt);
            \fill (-0.1001,0.2172) circle (1pt); \fill (-0.7597,0.9058) circle (1pt); \fill (-3.8094,2.1171) circle (1pt); \fill (-6.0577,0.9415) circle (1pt);
            \fill (-4.1842,0.5993) circle (1pt); \fill (-1.7753,0.7675) circle (1pt); \fill (-0.9972,0.9182) circle (1pt); \fill (-6.5370,1.6932) circle (1pt);
            \fill (-3.7693,1.4801) circle (1pt); \fill (-5.3435,0.6265) circle (1pt); \fill (-3.4319,0.7174) circle (1pt); \fill (-5.9580,0.5476) circle (1pt);
            \fill (-2.8541,1.2264) circle (1pt); \fill (-0.2795,0.4803) circle (1pt); \fill (-0.6471,0.6389) circle (1pt); \fill (-3.2643,0.6250) circle (1pt);
            \fill (-2.4899,0.6930) circle (1pt); \fill (-2.0460,1.2104) circle (1pt); \fill (-3.3017,2.2170) circle (1pt); \fill (-2.5288,0.4695) circle (1pt);
            \fill (-5.1595,0.9918) circle (1pt); \fill (-7.3842,0.3415) circle (1pt); \fill (0.0170,0.4563) circle (1pt);
            \fill (-7.3544,0.5188) circle (1pt); \fill (-5.9907,0.3842) circle (1pt); \fill (-5.4281,0.5091) circle (1pt); \fill (-6.2409,1.2324) circle (1pt);
            \fill (-3.7367,1.5424) circle (1pt); \fill (-5.4203,0.5675) circle (1pt);
    \end{tikzpicture}
    \caption{An illustration for constructing the estimator. The underlying submanifold $\mathcal{M}$, the estimator $F$ at point $z$ is constructed by the weighted average of noisy samples (black dots) lying within a distance $r$ from $z$. Let $\pi(z)$ denote the projection of $z$ onto $\mathcal{M}$, and define the vector $v_z = z - \pi(z)$.
    }\label{fig:estimatorF}
\end{figure}
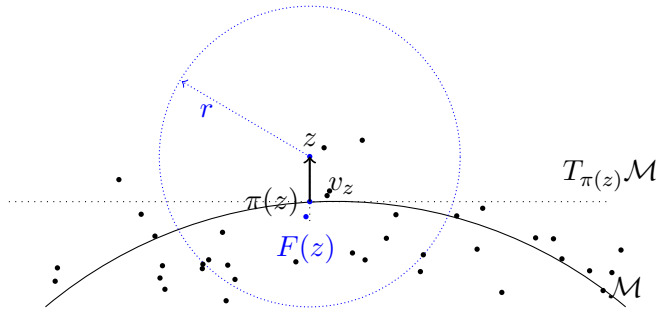

Since the Gaussian distribution can be approximated to vanish within a few standard deviations $C\sigma$, it is reasonable to consider the tubular neighborhood of $\mathcal{M}$ as
\[\Gamma = \{(x,v)\in T^\bot\mathcal{M} : \|v\|\le C\sigma\}.\]
We assume that 
the noise bandwidth $\sigma$ is sufficiently small compared to the reach $\tau$ of the manifold $\mathcal{M}$. Specifically, 
suppose $\sigma \le \sigma_0:=\frac{\tau-\varepsilon}{C+\sqrt{2}c_D}$ for any fixed $\varepsilon\in(0,\tau)$. 
\begin{thm}\label{thm:intro}
If the  sample size $N = O(\sigma^{-3d-5})$, then the estimator $F(z)$ satisfies
\[
F(z) = \pi(z) + \tfrac{d}{2} H_{\pi(z)}\sigma^2 + O(\sigma^3),
\quad \text{uniformly for all } z\in\Gamma,
\]
with probability at least $1 - C_1 \exp(-C_2 \sigma^{-c})$  
for some constants $c, C_1, C_2>0$, and 
where $H_{\pi(z)}$ is the mean curvature vector at projection $\pi(z)\in \mathcal{M}$.
The estimated $d$-dimensional manifold
$\widehat{\mathcal{M}}=F(\Gamma)$ has a 
Hausdorff distance $\tfrac{d}{2}H_{\pi(z)}\sigma^2$ to $\mathcal{M}$ and a reach bound below by $c'\tau$ for $c'>0$ with high probability.
\end{thm}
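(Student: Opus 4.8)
The plan is to pass to the population version of the estimator, expand it locally to third order in $\sigma$, transfer that expansion back to the empirical $F$ uniformly over $\Gamma$, and then read off the Hausdorff and reach statements. First I would introduce the population mean shift $\bar F(z):=\bE[\varphi_r(Y-z)Y]/\bE[\varphi_r(Y-z)]$. Since $\varphi_r$ is bounded by $O(\sigma^{-D})$ and supported in $B_D(0,\sqrt2 c_D\sigma)$, while for $z\in\Gamma$ the denominator $\bE[\varphi_r(Y-z)]$ is bounded below by $c\,\sigma^{d-D}$ (using the lower bound on $p_\sigma$ near $\mathcal M$ coming from positive reach and the density formula for $p_\sigma$), Bernstein's inequality applied to the i.i.d. sums $\sum_j\varphi_r(y_j-z)$ and $\sum_j\varphi_r(y_j-z)(y_j-z)$, combined with a union bound over a fine net of $\Gamma$ and the fact that $z\mapsto\varphi_r(\,\cdot\,-z)$ and its first two derivatives are Lipschitz at scales $O(\sigma^{-D}),O(\sigma^{-D-1}),O(\sigma^{-D-2})$, yields $\sup_{z\in\Gamma}\|F(z)-\bar F(z)\|=O(\sigma^3)$ — and more generally $\|D^kF-D^k\bar F\|=O(\sigma^{3-k})$ for $k=0,1,2$ — with probability at least $1-C_1\exp(-C_2\sigma^{-c})$, as soon as $N=O(\sigma^{-3d-5})$. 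The derivative bounds for $k=1,2$ are needed for the reach; since each derivative of $\varphi_r$ inflates the variance and the sup-norm by a factor $\sigma^{-1}$, this is what pushes the sample size above what the pointwise bound alone would require.

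The heart of the argument is the local expansion of $\bar F$. Fix $z\in\Gamma$, put $p=\pi(z)$ and $v=z-p\in T_p^\perp\mathcal M$ with $\|v\|\le C\sigma$. Writing $Y=X+\xi$ and integrating, the numerator and denominator of $\bar F$ become integrals over $\mathcal M\times\mathbb R^D$; the compact support of $\varphi_r$ localizes the $X$-integral to a ball of radius $O(\sigma)$ about $p$, which I parametrize as a graph $x(s)=p+(s,h(s))$ over $T_p\mathcal M$ with $h(0)=0$, $Dh(0)=0$, $D^2h(0)$ given by the second fundamental form, and a cubic remainder controlled by the $C^3$-norm. Rescaling $s=\sigma w$, $\xi=\sigma\eta$ and using $\varphi_r(ru)=r^{-D}\varphi_1(u)$, both integrals become Gaussian-type integrals over $(w,\eta)$ whose integrand is analytic in $\sigma$ up to an $O(\sigma^3)$ remainder (this is exactly where $C^3$ enters). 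The $\sigma^0$ term of the ratio is $\pi(z)$; the $\sigma^1$ term vanishes — this is the role of the calibration of $c_D$ together with the truncation $\chi$, which enforces the balance between the zeroth and second radial moments of $\varphi_1$ that makes the linear-in-$v$ restoring term match $-v$ up to $O(\sigma^2)$; and the $\sigma^2$ term, after evaluating Gaussian moments (odd moments vanish by tangential symmetry, the second moments producing $\operatorname{tr}\mathrm{II}=dH$), equals $\tfrac d2 H_{\pi(z)}$. All error constants depend only on the $C^3$-norm of $\mathcal M$, on $\tau$, and on $C,c_D,\chi$, so the remainder is $O(\sigma^3)$ uniformly in $z\in\Gamma$.

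On the event of the first step I then get $F(z)=\bar F(z)+O(\sigma^3)=\pi(z)+\tfrac d2 H_{\pi(z)}\sigma^2+O(\sigma^3)$ uniformly for $z\in\Gamma$. Since $\|F(z)-\pi(z)\|=\tfrac d2\|H_{\pi(z)}\|\sigma^2+O(\sigma^3)$, the set $F(\Gamma)$ lies in the $O(\sigma^2)$-tube of $\mathcal M$; conversely $\mathcal M\subset\Gamma$ and $\|F(q)-q\|=O(\sigma^2)$ for $q\in\mathcal M$, so $\mathcal M$ is $O(\sigma^2)$-covered by $F(\Gamma)$, whence $d_H(\mathcal M,F(\Gamma))=O(\sigma^2)$ with leading term $\tfrac d2\|H\|\sigma^2$. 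For the reach, note that modulo $O(\sigma^3)$ the map $F$ factors through $\pi$, so $F(\Gamma)$ coincides up to $O(\sigma^3)$ with the image of $\mathcal M$ under the $C^2$ map $q\mapsto F(q)=q+\tfrac d2 H_q\sigma^2+O(\sigma^3)$, which by the derivative bounds of the first step and the expansion above is an $O(\sigma^2)$-$C^2$-perturbation of the identity on $\mathcal M$; stability of the reach under $C^2$-small perturbations (the reach being governed by the norm of the second fundamental form and the width of the narrowest bottleneck, both continuous in the $C^2$ topology) gives $\operatorname{reach}(F(\Gamma))\ge c'\tau$.

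The main obstacle is the second step: producing the expansion with a genuinely \emph{uniform} $O(\sigma^3)$ remainder. The difficulties are that (i) the normalization $\bE[\varphi_r(Y-z)]\asymp\sigma^{d-D}$ amplifies every error by $\sigma^{D-d}$, so all three orders must be tracked exactly; (ii) $z$ lies at distance comparable to the bandwidth $r=c_D\sigma$, so this is not a perturbative ``$z$ near $\mathcal M$'' regime — the Gaussian weight in the normal directions does not simplify, and the cancellation of the linear-in-$v$ term rests delicately on the truncation and on the precise value of $c_D$; and (iii) $C^3$ is the borderline smoothness for the cubic term of the graph parametrization to contribute $O(\sigma^3)$ rather than only $o(\sigma^3)$, so the modulus of continuity of the third derivatives must be handled with care. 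A secondary obstacle is the derivative concentration in the first step with the stated exponential-in-$\sigma^{-c}$ probability, since differentiating $\varphi_r$ worsens both the variance and the sup-norm.
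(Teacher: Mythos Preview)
Your proposal is correct and shares the paper's global architecture (population expansion, concentration to the empirical estimator, then Hausdorff and reach), but the central expansion is organized differently. You expand $\bar F(z)$ directly as a double integral over $\mathcal M\times\mathbb R^D$, localizing $X$ via a graph over $T_{\pi(z)}\mathcal M$ and rescaling $(s,\xi)\mapsto(\sigma w,\sigma\eta)$. The paper instead factors the computation in two: first a purely ambient Taylor expansion of $p_\sigma$ about $z$ inside the kernel integral yields $\mu_z=z+\sigma^2\,\nabla\log p_\sigma(z)+O(\sigma^4)$, with $c_D$ chosen precisely so that the prefactor of the score is $\sigma^2$; second, the score itself is computed in exponential coordinates on $\mathcal M$ as $\nabla\log p_\sigma(z)=-v_z/\sigma^2+\tfrac d2 H_{\pi(z)}+O(\|v_z\|)$, so that $\sigma^2$ times the leading score term cancels $v_z=z-\pi(z)$ exactly. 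This two-step route isolates the score as an object of independent interest (ridge sets, diffusion interpretation) and sidesteps your obstacle~(ii), since the score expansion is valid throughout $T(\tau-\varepsilon)$ without any ``$z$ close to $\mathcal M$'' smallness assumption. For concentration the paper uses Hoeffding rather than Bernstein and only controls $F-\mu$ at order $k=0$, passing to all of $\Gamma$ by a finite covering plus Lipschitz continuity of $F$, $\pi$, and $H_{\pi(\cdot)}$; the reach is then handled by a separate Federer-criterion lemma applied to the normal perturbation $x\mapsto x+\tfrac d2 H_x\sigma^2$. Your plan to also bound $D^kF-D^k\bar F$ for $k=1,2$ is more careful on this last point than the paper's somewhat informal transfer of the reach bound from the population map to the empirical one.
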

\begin{rem}
(1) The leading bias is $O(\sigma^2)$ in the normal direction, proportional to the mean curvature vector, while the tangential error is of order $O(\sigma^3)$. This reflects the quadratic normal deviation due to the mean curvature, whereas tangential errors cancel to higher order due to symmetry. 

(2) The error reduces to $O(\sigma^3)$ where the mean curvature vanishes, hence on any minimal submanifolds (where mean curvature vanishes everywhere, such as a Calabi–Yau manifold). This result establishes a novel statistical counterpart to the geometric prominence of minimal surfaces. It also improves upon the $O(\sigma^2)$ Hausdorff distance bound obtained in \cite{fefferman2021fitting} by Fefferman et al..  

\end{rem}


Our analysis further situates the estimator within several broader theoretical frameworks.

\textbf{Random walk on a graph.} The estimator $F$ can be viewed as one step of a random walk on the local graph constructed by observed points, where the Markov transition probabilities are given by the normalization of kernel $\varphi_r$. This process enhances the underlying geometric structure and suppresses noise-induced distortions, so that the leading term of $F(z)$ recovers the projection $\pi(z)$.

\textbf{Diffusion process.} The isotropic Gaussian noise model corresponds to a diffusion process on \(\mathbb{R}^D\) with heat kernel \(k_t(x,y)=(4\pi t)^{-D/2}e^{-\|x-y\|^2/(4t)}\) and \(\sigma^2=2t\). Hence the noisy density \(p_\sigma\) in \eqref{eq:def:nu} is obtained by diffusing the intrinsic density \(\omega\) on \(\mathcal{M}\) for time \(t=\sigma^2/2\). Unlike denoising schemes \cite{hein-maier-2006} that attempt to reverse this diffusion. Note that it is generally mathematically inconsistent since the graph Laplace operator is positive, and the evolution remains diffusive in nature. In contrast, our  estimator works directly with the diffused distribution. It suppresses noise by enhancing the underlying geometry, recovering the projection onto \(\mathcal{M}\) as the leading term, while the second‑order bias is explicitly governed by the mean curvature vector of \(\mathcal{M}\).

\textbf{Discrete mean‑curvature flow.} Theorem \ref{thm:intro} yields 
\[
\lim_{\sigma\to0^+} \frac{F(\pi(z)) - \pi(z)}{\sigma^2} =\tfrac{d}{2} H_{\pi(z)}.
\]
The operator $F-id$ on the left-hand side coincides with a normalized graph Laplacian built from truncated normalized weights $\varphi_r$. The limiting behavior indicates that the normalized graph Laplacian, when applied to the coordinate functions, converges to the mean curvature vector field on the manifold. This result overlaps with the theory of diffusion maps \cite{COIFMAN20065}. The estimator $F$ on $\mathcal{M}$ acts as a discrete mean curvature flow for a time scale of $\sigma^2$, enhancing both geometric and topological structure of the data.
In noisy settings, our analysis shows that the leading term of $F-id$ recovers the latent manifold, while the bias term arises from second-order extrinsic curvature.

\subsection{Main contribution}
Our approach fundamentally differs from previous manifold fitting techniques by circumventing the sequential error propagation that plagues multi-stage estimators. Conventional methods based on local PCA first construct a tangent space estimate \(\widehat{T_y\mathcal{M}}\) at a noisy point \(y\). This linear approximation inherently carries a normal-direction bias of order \(O(r^2)\), where \(r\) is the local scale parameter. This bias propagates into subsequent steps, such as estimation of the normal space  \cite{yao2019manifold}, construction of an approximate distance function \cite{mohammed2017manifold,fefferman2018fitting}, and the identification of the manifold as a zero-level set or ridge \cite{genovese2014nonparametric}. Each stage amplifies the initial geometric distortion, leading to a cumulative error that is difficult to characterize and control rigorously.

In contrast, by leveraging the explicit representation of \(p_\sigma\) through the exponential map on \(\mathcal{M}\), we perform a statistical inversion of the data-generating process. This direct analysis reveals that the state-of-the-art error \(O(\sigma^2)\) in the local mean shift is a precise geometric signal: it is proportional to the mean curvature vector \(H_{\pi(z)}\) of the underlying manifold at the projection point \(\pi(z)\). Thus, the mean curvature as a central quantity in extrinsic geometry emerges from the probabilistic structure of the corrupted data.

Beyond this theoretical insight, our method offers several practical advantages over existing manifold fitting frameworks:

\textbf{Theoretically grounded bandwidth selection.} The local averaging radius \(r = c_D \sigma\) is derived from an asymptotic analysis (Theorem~\ref{thm: loc average}, Theorem~\ref{th:score_noisy}), where the constant \(c_D\) depends explicitly on the ambient dimension \(D\) and the chosen kernel. This provides a rigorous, principled criterion for choosing the critical scale parameter. Many existing methods 
    lack such guidance; their performance hinges on heuristics for choosing a neighborhood size (\(r\) or \(k\) in \(k\)-NN), which often necessitates computationally expensive cross-validation or tuning. Our derived bandwidth not only ensures optimal bias-variance tradeoff for the stated theoretical guarantees but also translates into a computationally efficient and reproducible procedure.

\textbf{Minimal parameter requirements.} The construction requires knowledge of only the noise bandwidth \(\sigma\), which can often be estimated from the data or is known from the model. Crucially, it does not require prior knowledge of the intrinsic dimension \(d\) of the manifold. This is a practical benefit, as \(d\) is typically unknown in applications and its estimation is a separate, non-trivial problem. Methods relying on local PCA or \(k\)-NN graphs inherently depend on a correct choice of \(d\) for constructing the tangent space, making them sensitive to dimension misspecification.

\textbf{Explicit and non-iterative projection.} The estimator \(F(z)\) approximates the projection onto the manifold in a single, explicit mean-shift step. This contrasts with iterative schemes (e.g., subspace-constrained mean shift or gradient ascent on a density ridge) whose convergence properties and fixed points may not admit a clear geometric interpretation. It also contrasts with implicit representation methods that define the estimated manifold as the zero set of a learned function. Such implicit representations obscure the direct correspondence between an input point \(z\) and its projection, complicating both analysis and practical use.



\subsection{Organization}
This paper is organized as follows. Section 2 presents the geometric and probabilistic preliminaries.
 Section 3 analyzes the asymptotic properties of the population-level kernel estimator $F(z)$. 
Section 4 establishes high-probability quantitative bounds for the convergence of the sample estimator \(F(z)\) to its population counterpart \(\mu_z\). The geometric analysis of the output manifold is provided in Section 5. Numerical experiments demonstrating the method's efficacy are presented in Section 6. Finally, Section 7 provides a summary of all proofs.

\section{Preliminaries}
This section establishes the notations and reviews geometric and probabilistic facts used throughout the paper.

\subsection{Notations}
We fix integers $d, D$, and consider a compact $C^3$ submanifold $\mathcal{M}\subset \mathbb{R}^D$ with dimension $d<D$. The ambient space $\mathbb{R}^D$ is equipped with the Euclidean metric $g_E$ and the $L^2$-norm  $\|\cdot\|$. The induced Riemannian metric on $\mathcal{M}$ is denoted by $g:=g_E \mid _\mathcal{M}$. We write $\operatorname{Vol}(\mathcal{M})$ for the volume of $\mathcal{M}$ with respect to the induced volume measure $d\mu$ and assume $\operatorname{Vol}(\mathcal{M})< \infty$. For $m \in \mathbb{N}$, $z \in \mathbb{R}^m$ and $r > 0$ we denote by $B_m(z,r)$ the open Euclidean ball of radius $r$ centered at $z$ in $\mathbb{R}^m$. 

Throughout the paper $x$ represents a point on the latent manifold $\mathcal{M}$, $y$ represents a point related to the distribution $p_\sigma$, and $z$ represents an arbitrary point in the ambient space.
 Generic constants $C, C_1, C_2, \dots$ may change from line to line and, unless otherwise stated, depend only on the manifold $\mathcal{M}$, 
the reach $\tau$, and occasionally on the choice of $\varepsilon$. In particular, these constants are uniform in any fixed tubular neighborhood $T(\tau-\varepsilon)$.

\subsection{Geometric quantities of submanifolds}\label{se:Geometric_quantities}

We introduce several geometric quantities that characterize the embedding $\mathcal{M} \subset \mathbb{R}^D$.
We denote by $\nabla$ the ambient derivative in $\mathbb{R}^D$ (i.e. the Levi--Civita connection of $g_E$), and by $\nabla^\mathcal{M}$ the Levi--Civita connection on $(\mathcal{M},g)$. The tangent space and normal space at $x\in \mathcal{M}$ are denoted by $T_x\mathcal{M}$ and $T_x^\bot \mathcal{M}$ respectively. The tangent bundle and normal bundle are denoted by $T\mathcal{M}$ and $T^\bot \mathcal{M}$.

The \emph{second fundamental form} at $x \in \mathcal{M}$ is $\Pi_x : T_x\mathcal{M} \times T_x\mathcal{M} \to T_x^\perp\mathcal{M}$, defined for vector fields $X,Y$ on $\mathcal{M}$ by
\[
\Pi_x(X,Y) = (\nabla_X Y)^\perp,
\]
where $(\cdot)^\perp$ denotes orthogonal projection onto the normal space. This bilinear form measures the extrinsic curvature of $\mathcal{M}$. For a tensor $T$ we denote its operator norm by $\|T\|_{\text{op}}$; for instance,
\[
  \|\Pi_x\|_{\text{op}}
  := \sup_{\|u\|=\|v\|=1} \|\Pi_x(u,v)\|,
  \qquad u,v \in T_x \mathcal{M}.
\]

For each normal direction $n \in T_x^\perp\mathcal{M}$, the associated \emph{shape operator} $S_n : T_x\mathcal{M} \to T_x\mathcal{M}$ is defined by
\[
S_n(X) = -(\nabla_X n)^\top,
\]
where $(\cdot)^\top$ denotes the projection onto $T_x \mathcal{M}$. The second fundamental form and shape operator are related by the identity
\[
\langle S_n(X), Y \rangle = \langle \Pi(X,Y), n \rangle.
\]
The eigenvalues of $S_n$ are the \emph{principal curvatures} in the direction $n$, and the corresponding eigenvectors are the \emph{principal directions}.

The \emph{mean curvature vector} $H_x \in T_x^\perp\mathcal{M}$ provides a global measure of bending:
\begin{equation}\label{eq:meancurvature}
H_x = \frac{1}{d} \operatorname{Tr}(\Pi_x) = \frac{1}{d} \sum_{i=1}^d \Pi_x(e_i,e_i),
\end{equation}
where $\{e_i\}$ is an orthonormal basis of $T_x\mathcal{M}$. In particular, $\langle H, n \rangle$ gives the mean curvature in the direction $n$.
Under \emph{mean curvature flow}, the manifold evolves proportional to its mean curvature.  Minimal submanifolds have zero mean curvature. 

The \emph{reach} $\tau$ of $\mathcal{M}$ controls these curvature quantities globally. 
\begin{definition}[Reach] \label{def:reach}
    Let $A$ be a closed subset of $\mathbb{R}^D$. The reach of $A$, denoted by reach($A$), is the largest number $\tau$ satisfying: any point at a distance less than $\tau$ from $A$ has a unique nearest point in $A$.
\end{definition}
We assume throughout that $\mathcal{M}$ has positive reach $\tau > 0$.
Intuitively, a large reach implies that the manifold is locally close to its tangent space, which can be explained in the following lemma \cite{federer1959curvature}.
\begin{lem}[Federer's reach condition]\label{Lemma:ReachCond}
Let $\mathcal{M}$ be an embedded sub-manifold of $\mathbb{R}^{D}$ with reach $\tau>0$. Then
    $$\tau^{-1} = \sup \left\{\frac{2d(b,T_a\mathcal{M})}{\|a-b\|^2}|a,b\in\mathcal{M},~a\neq b\right\}.$$
\end{lem}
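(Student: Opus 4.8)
The plan is to prove the two inequalities \(\rho\ge\tau\) and \(\tau\ge\rho\) separately, where \(\rho^{-1}\) denotes the supremum on the right-hand side (with the convention \(\rho=\infty\), i.e. \(\tau^{-1}=0\), when \(\mathcal M\) is totally geodesic), and where \(T_a\mathcal M\) is read as the affine tangent plane through \(a\), so that \(d(b,T_a\mathcal M)=\|(b-a)^\perp\|\) is the norm of the orthogonal projection of \(b-a\) onto the normal space \(T_a^\perp\mathcal M\). The only analytic input is the first-order optimality condition: if \(p\in\mathbb R^D\) and \(a\in\mathcal M\) attains \(\|p-a\|=d(p,\mathcal M)\), then \(p-a\in T_a^\perp\mathcal M\); this follows by differentiating \(s\mapsto\|p-\gamma(s)\|^2\) along any \(C^1\) curve \(\gamma\subset\mathcal M\) with \(\gamma(0)=a\), using only that \(\mathcal M\) is \(C^1\) (here \(C^3\)).

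For \(\rho\ge\tau\): fix \(a\neq b\) in \(\mathcal M\), a unit vector \(n\in T_a^\perp\mathcal M\), and \(t<\tau\), and put \(p=a+tn\). Since \(\operatorname{reach}(\mathcal M)=\tau\), nearest-point projection onto \(\mathcal M\) is single-valued on the open tube \(\{d(\cdot,\mathcal M)<\tau\}\), and along the segment \(s\mapsto a+sn\), \(s\in[0,t]\), the foot point is locally constant (a jump would contradict uniqueness of the foot point on the tube) and equals \(a\) at \(s=0\) by continuity; hence \(d(p,\mathcal M)=\|p-a\|=t\), i.e. \(\|a+tn-b\|^2\ge t^2\) for all \(b\in\mathcal M\). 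Expanding gives \(\|a-b\|^2\ge 2t\,\langle n,b-a\rangle\). Taking the supremum over unit \(n\in T_a^\perp\mathcal M\) yields \(\|a-b\|^2\ge 2t\,\|(b-a)^\perp\|=2t\,d(b,T_a\mathcal M)\), and letting \(t\uparrow\tau\) gives \(2d(b,T_a\mathcal M)/\|a-b\|^2\le\tau^{-1}\); taking the supremum over \(a\neq b\) gives \(\rho^{-1}\le\tau^{-1}\).

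For \(\tau\ge\rho\): it suffices to show \(\operatorname{reach}(\mathcal M)\ge\rho\), i.e. every \(p\) with \(d(p,\mathcal M)<\rho\) has a unique nearest point in \(\mathcal M\). Existence holds since \(\mathcal M\) is compact, hence closed. For uniqueness, suppose \(a\neq a'\) both attain \(t:=d(p,\mathcal M)<\rho\). From \(\|p-a'\|^2=\|p-a\|^2\) we get \(\|a'-a\|^2=2\langle p-a,\,a'-a\rangle\). Since \(p-a\in T_a^\perp\mathcal M\) by the optimality condition, \(\langle p-a,a'-a\rangle=\langle p-a,(a'-a)^\perp\rangle\le\|p-a\|\,\|(a'-a)^\perp\|=t\,d(a',T_a\mathcal M)\), and by definition of \(\rho\), \(d(a',T_a\mathcal M)\le\|a-a'\|^2/(2\rho)\). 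Combining, \(\|a'-a\|^2\le(t/\rho)\|a'-a\|^2\), and dividing by \(\|a'-a\|^2>0\) forces \(t\ge\rho\), a contradiction. Hence nearest-point projection is single-valued on the open \(\rho\)-tube, so \(\operatorname{reach}(\mathcal M)\ge\rho\), i.e. \(\tau\ge\rho\). Together with the first part, \(\tau=\rho\).

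The step I expect to require the most care (rather than ingenuity) is the normal-fiber fact used in the first inequality: that under \(\operatorname{reach}(\mathcal M)=\tau\) the foot point of \(a+tn\) is exactly \(a\) for every unit normal \(n\) and every \(t<\tau\). One may cite this directly from Federer's development of the reach, or prove it via the local-constancy argument sketched above; everything the first inequality uses is the resulting bound \(\|a+tn-b\|\ge t\). A minor point worth recording for completeness is the behaviour of the ratio as \(b\to a\): the second-order Taylor expansion of \(\mathcal M\) in Monge coordinates at \(a\) gives \(d(b,T_a\mathcal M)=\tfrac12\|\Pi_a(u,u)\|\,\|a-b\|^2+o(\|a-b\|^2)\) with \(u=(b-a)/\|a-b\|\), so \(\limsup_{b\to a}2d(b,T_a\mathcal M)/\|a-b\|^2=\|\Pi_a\|_{\mathrm{op}}\); thus the supremum in the lemma is finite, is at least \(\sup_{a}\|\Pi_a\|_{\mathrm{op}}\), and the lemma recovers as a corollary the familiar pointwise curvature bound \(\|\Pi_a\|_{\mathrm{op}}\le\tau^{-1}\).
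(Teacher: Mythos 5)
The paper does not prove this lemma at all: it is quoted verbatim from Federer \cite{federer1959curvature} (Theorem 4.18 there), so there is no internal proof to compare against. Your two-sided argument is the standard one, and both directions are structurally sound. The direction $\tau\ge\rho$ (unique projection on the open $\rho$-tube) is complete and correct: existence from closedness, the first-order condition $p-a\in T_a^\perp\mathcal M$, the identity $\|a'-a\|^2=2\langle p-a,a'-a\rangle$, Cauchy--Schwarz against the normal component, and the defining inequality for $\rho$ combine exactly as you write. The closing remark relating the on-diagonal $\limsup$ to $\|\Pi_a\|_{\mathrm{op}}$ is also correct and explains why the supremum is finite for a compact $C^2$ manifold.

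The one soft spot is the normal-fiber fact in the direction $\rho\ge\tau$, namely that $d(a+tn,\mathcal M)=t$ for every unit $n\in T_a^\perp\mathcal M$ and every $t<\tau$. You correctly isolate this as the delicate step, but the justification you sketch --- ``the foot point is locally constant; a jump would contradict uniqueness'' --- is not an argument: single-valuedness of the projection on the tube makes $\xi(a+sn)$ a \emph{continuous} function of $s$, and continuity is perfectly compatible with the foot point drifting continuously away from $a$. The set $S=\{s\in[0,t]:\xi(a+sn)=a\}$ is nonempty and closed by continuity, but its \emph{openness} is the real content (Federer's 4.8(6) only gives the easy ``scaling down'' half, $d(a+\lambda u,A)=\lambda\|u\|$ for $0\le\lambda\le1$ when $d(a+u,A)=\|u\|$; extending the normal segment upward to $t<\tau$ is 4.8(12) and needs a separate argument, e.g.\ a connectedness argument combined with the second-order Taylor expansion of $\mathcal M$ at $a$ to rule out nearby competing foot points). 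Since everything downstream of this fact in your proof is correct, and the fact itself is a standard citable result, the cleanest fix is the one you already offer: cite Federer 4.8(12) (or Theorem 4.8(7) directly, which is the inequality you derive from it) rather than relying on the local-constancy sketch.
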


Positive reach implies a uniform bound on the second fundamental form,
and together with the regularity and compactness of $\mathcal{M}$
yields uniform bounds on curvature tensors and their covariant derivatives.

\begin{lem}[
\cite{Niyogi2008FindingTH}, Proposition 6.1]\label{lem:second fundamental form bound}
If $\mathcal{M} \subset \mathbb{R}^D$ has positive reach $\tau > 0$, then
\begin{align}
  \|\Pi\|_{\text{op}} \le \tau^{-1}.
\end{align}
In particular, when $\mathcal{M}$ is compact and $C^3$, all curvature tensors and the second fundamental form admit bounded covariant derivatives.
\end{lem}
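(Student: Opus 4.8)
The statement splits into two parts that I would handle separately. The inequality $\|\Pi\|_{\text{op}}\le\tau^{-1}$ uses only positivity of the reach (it is exactly \cite{Niyogi2008FindingTH}, Proposition~6.1), and I would derive it from Federer's reach condition, Lemma \ref{Lemma:ReachCond}, which is already available in the excerpt. The ``in particular'' clause is a soft consequence of compactness together with $C^3$-regularity of the embedding, proved by a finite-chart continuity argument.

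\textbf{Step 1: the curvature bound.} Fix $p\in\mathcal{M}$ and a unit vector $u\in T_p\mathcal{M}$. I would take the $g$-geodesic $\gamma:(-\delta,\delta)\to\mathcal{M}$ with $\gamma(0)=p$ and $\dot\gamma(0)=u$ (it is $C^2$, indeed $C^3$, since $g\in C^2$) and Taylor-expand in the ambient space, $\gamma(t)-p=tu+\tfrac{t^2}{2}\,\ddot\gamma(0)+o(t^2)$. Since $\gamma$ is a geodesic, the Gauss formula gives $\ddot\gamma(0)=\Pi_p(u,u)\in T_p^\bot\mathcal{M}$, so the normal component of $\gamma(t)-p$ is $\tfrac{t^2}{2}\Pi_p(u,u)+o(t^2)$; hence $d(\gamma(t),T_p\mathcal{M})=\tfrac{t^2}{2}\|\Pi_p(u,u)\|+o(t^2)$ while $\|\gamma(t)-p\|^2=t^2+o(t^2)$. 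Plugging $a=p$, $b=\gamma(t)$ into Lemma \ref{Lemma:ReachCond} and letting $t\to0$ yields $\|\Pi_p(u,u)\|\le\tau^{-1}$ for every unit $u$. To pass from this diagonal bound to the operator norm $\sup_{\|u\|=\|v\|=1}\|\Pi_p(u,v)\|$, I would invoke polarization for the vector-valued symmetric bilinear form $\Pi_p$: for each unit $n\in T_p^\bot\mathcal{M}$ the scalar symmetric form $(X,Y)\mapsto\langle\Pi_p(X,Y),n\rangle$ attains its operator norm on the diagonal, and taking the supremum over $n$ and over $u,v$ gives $\|\Pi_p\|_{\text{op}}=\sup_{\|u\|=1}\|\Pi_p(u,u)\|\le\tau^{-1}$; since $p$ was arbitrary this is the claim. (Equivalently, following \cite{Niyogi2008FindingTH}, one may replace Lemma \ref{Lemma:ReachCond} by the observation that the open ball of radius $\tau$ centered at $p+\tau\,\Pi_p(u,u)/\|\Pi_p(u,u)\|$ is disjoint from $\mathcal{M}$, and compare it with the osculating circle of $\gamma$.)

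\textbf{Step 2: bounded covariant derivatives.} Cover the compact manifold $\mathcal{M}$ by finitely many charts in which the embedding $\iota:\mathcal{M}\hookrightarrow\mathbb{R}^D$ is $C^3$ with $C^3$ transition maps. In each chart the components of the induced metric $g$ are $C^2$ (being quadratic in $\partial\iota$), the Christoffel symbols are $C^1$, and $\Pi$ is $C^1$ (the normal part of $\partial^2\iota$); then the Gauss and Codazzi equations express the Riemann tensor and shape operators as polynomials in $\Pi$, and $\nabla^\mathcal{M}\Pi$ (and, after one further intrinsic differentiation, $\nabla^\mathcal{M}R$) as polynomials in $\Pi$, $\Gamma$ and their first derivatives, hence with continuous components. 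Continuous functions on the compact $\mathcal{M}$ are bounded, and the $C^3$ transitions make the bounds chart-independent; this gives the asserted uniform bounds. The $C^3$ hypothesis is exactly what keeps $\nabla^\mathcal{M}\Pi$ and $\nabla^\mathcal{M}R$ continuous.

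\textbf{Where the work is.} The result is classical, so there is no genuine obstacle; the only two points needing care are the vector-valued polarization identity in Step 1, which is what upgrades the diagonal bound $\|\Pi_p(u,u)\|\le\tau^{-1}$ to the stated operator-norm bound, and, in Step 2, the bookkeeping of how many derivatives of $\iota$ enter each curvature tensor so that $C^3$ genuinely suffices for all the quantities claimed.
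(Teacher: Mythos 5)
Your proposal is correct. The paper itself gives no proof of this lemma --- it is quoted from \cite{Niyogi2008FindingTH} (Proposition 6.1) --- so there is no internal argument to compare against, but your derivation is sound and self-contained given the tools already in the paper: the second-order geodesic expansion $\ddot\gamma(0)=\Pi_p(u,u)$ is exactly the computation used in the paper's proof of Lemma \ref{lem:exponential map expansion}, feeding it into Federer's criterion (Lemma \ref{Lemma:ReachCond}) gives the diagonal bound $\|\Pi_p(u,u)\|\le\tau^{-1}$, and the vector-valued polarization step (reducing to the scalar forms $\langle\Pi_p(\cdot,\cdot),n\rangle$ and using that a symmetric bilinear form attains its norm on the diagonal) correctly upgrades this to the operator norm. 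The ``in particular'' clause is, as you say, a soft compactness-plus-regularity statement, and your accounting of derivatives (with $C^3$ yielding continuity of $\nabla^{\mathcal M}\Pi$ and $\nabla^{\mathcal M}R$, i.e.\ bounded \emph{first} covariant derivatives, which is the reading consistent with how the lemma is used in Lemma \ref{lem:Lip-derivative}) is accurate.
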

This implies that all principal curvatures are bounded by $\tau^{-1}$ in magnitude, ensuring the manifold does not curve too sharply. 

\subsection{Exponential map}

 The \emph{exponential map} at $x \in \mathcal{M}$ is denoted by $\exp_x: T_x \mathcal{M} \to \mathcal{M}$. The \emph{injectivity radius} at $x \in \mathcal{M}$ is denoted by $\text{inj}(x)$, which is the largest radius $r>0$ such that $\exp_x:B_d(0,r)\to \mathcal{M}$ is a diffeomorphism. The global injectivity radius of $\mathcal{M}$ is defined as $\text{inj}_\mathcal{M}=\inf_{x\in \mathcal{M}} \text{inj}(x)$. With positive reach $\tau>0$, there is a lower bound for the injectivity radius.

\begin{lem}[~\cite{Alexander2005GaussEA} Corollary 1.4]\label{lem:injectivity radius} 
    Let $\mathcal{M}\subset\mathbb{R}^D$ be a $C^2$ manifold with positive reach $\tau>0$, then
    \[
    \text{inj}_\mathcal{M} \ge \pi\tau.
    \]
\end{lem}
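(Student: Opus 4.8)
The plan is to bound the injectivity radius by controlling, through the reach $\tau$, the two classical obstructions — conjugate points and short closed geodesics — and combining them via Klingenberg's lemma. Since $\mathcal{M}$ is compact in our setting (the general case being handled in \cite{Alexander2005GaussEA}), Klingenberg's lemma applies in the clean form $\operatorname{inj}_{\mathcal M}=\min\{\operatorname{conj}(\mathcal M),\tfrac12\ell_0\}$, where $\operatorname{conj}(\mathcal M)$ is the conjugate radius and $\ell_0$ the length of the shortest closed geodesic of $\mathcal{M}$ (and $\operatorname{inj}_{\mathcal M}=\operatorname{conj}(\mathcal M)$ if no closed geodesic exists). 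So I would prove $\operatorname{conj}(\mathcal M)\ge\pi\tau$ and $\ell_0\ge 2\pi\tau$ separately.

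For the conjugate radius, I would first pass from the extrinsic bound $\|\Pi_x\|_{\mathrm{op}}\le\tau^{-1}$ of Lemma \ref{lem:second fundamental form bound} to an intrinsic one: since $\mathbb{R}^D$ is flat, the Gauss equation gives, for orthonormal $u,v\in T_x\mathcal{M}$,
\[
K_{\mathcal M}(u,v)=\langle\Pi_x(u,u),\Pi_x(v,v)\rangle-\|\Pi_x(u,v)\|^2\le\|\Pi_x\|_{\mathrm{op}}^2\le\tau^{-2},
\]
so the sectional curvature of $\mathcal{M}$ is at most $\tau^{-2}$ everywhere. The Rauch comparison theorem (equivalently the Morse--Schoenberg index comparison) then rules out conjugate points along any unit-speed geodesic before parameter $\pi/\sqrt{\tau^{-2}}=\pi\tau$, hence $\operatorname{conj}(\mathcal M)\ge\pi\tau$.

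For the closed-geodesic bound I would use the reach directly. Let $\gamma$ be any closed geodesic of $\mathcal{M}$, parametrized by arc length. By the Gauss formula $\ddot\gamma=\nabla^{\mathcal M}_{\dot\gamma}\dot\gamma+\Pi_\gamma(\dot\gamma,\dot\gamma)=\Pi_\gamma(\dot\gamma,\dot\gamma)$, so the Euclidean acceleration of $\gamma$ is normal to $\mathcal{M}$ with $\|\ddot\gamma\|\le\|\Pi\|_{\mathrm{op}}\le\tau^{-1}$; thus $\gamma$ is a closed regular curve in $\mathbb{R}^D$ whose curvature is pointwise at most $\tau^{-1}$. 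Fenchel's theorem (the total curvature of a closed curve is at least $2\pi$) then yields $2\pi\le\int_0^{\ell(\gamma)}\|\ddot\gamma\|\,ds\le\tau^{-1}\ell(\gamma)$, i.e. $\ell(\gamma)\ge 2\pi\tau$, so $\ell_0\ge2\pi\tau$. Combined with the previous paragraph this gives $\operatorname{inj}_{\mathcal M}\ge\min\{\pi\tau,\pi\tau\}=\pi\tau$; the bound is sharp, being attained by the round sphere of radius $\tau$.

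The hard part is the Klingenberg step, which I am citing as a black box. What comes directly out of the cut-locus analysis is only a geodesic \emph{loop} at the extremal point, possibly with a corner, and feeding such a loop into the piecewise-smooth Fenchel inequality (exterior angle $\le\pi$) loses a factor and yields only the weaker bound $\operatorname{inj}_{\mathcal M}\ge\tfrac12\pi\tau$. Upgrading the loop to a corner-free closed geodesic of length exactly $2\operatorname{inj}_{\mathcal M}$ requires a first-variation argument showing the two minimizing geodesics meet at angle $\pi$ at both endpoints, and in the non-compact case a properness argument to realize the infimum defining $\operatorname{inj}_{\mathcal M}$; both are the substance of \cite{Alexander2005GaussEA}. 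A separate minor technicality — uniqueness and regularity of geodesics for merely $C^2$ submanifolds — does not arise here, where $\mathcal{M}$ is $C^3$.
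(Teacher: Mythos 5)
The paper gives no proof of this lemma --- it is quoted verbatim as Corollary~1.4 of \cite{Alexander2005GaussEA} and used as a black box --- so there is no in-text argument to compare yours against. Your proof is correct as a self-contained argument in the compact smooth setting, which is all the paper actually needs: the Gauss-equation bound $K_{\mathcal M}\le\|\Pi\|_{\mathrm{op}}^2\le\tau^{-2}$, the Rauch/Morse--Schoenberg bound $\operatorname{conj}(\mathcal M)\ge\pi\tau$, the Gauss-formula observation that closed geodesics of $\mathcal M$ are closed space curves of pointwise curvature at most $\tau^{-1}$, and the Fenchel bound $\ell_0\ge 2\pi\tau$ all check out, and your sharpness example (the round sphere of radius $\tau$) is right. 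The route is, however, genuinely different from the cited source: Alexander and Bishop prove the statement for $C^{1,1}$ subspaces of spaces with curvature bounded above via synthetic comparison geometry, precisely so as to avoid the curvature tensor and the classical Klingenberg machinery, which is why their statement can assume only $C^2$ (indeed positive reach) regularity, while your argument needs $\mathcal M$ at least $C^3$ for the sectional curvature and Jacobi-field theory to be classical --- a gap you correctly flag and which is harmless here since the paper assumes $C^3$ throughout. Your identification of the Klingenberg step (upgrading a cut-locus geodesic loop to a genuine closed geodesic, without which Fenchel with a corner only yields $\operatorname{inj}_{\mathcal M}\ge\tfrac12\pi\tau$) as the one nontrivial ingredient you are importing is accurate and appropriately honest.
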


For $r < \text{inj}(x)$, the exponential map introduces a exponential coordinate chart $  \bigl( \exp_x(B_d(0,r)), \exp_x^{-1} \bigr)$, which provides the following standard expansions for a local coordinate system around $x$. 
 
\begin{lem}\label{lem:exponential map expansion}
     For $x \in \mathcal{M}$ and $u\in T_x\mathcal{M}$ with $\|u\|$ sufficiently small, the exponential map satisfies
     \[
     \text{exp}_x(u) = x  + u +\frac{1}{2}\Pi_x(u,u) + O(\|u\|^3). 
     \]
     where $\Pi_x : T_x M \times T_x M \to T_x^\perp M$ is the second fundamental form at $x$.
\end{lem}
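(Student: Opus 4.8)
The plan is to realize $\exp_x(u)$ as the time-one value of a geodesic and Taylor-expand that curve directly in the ambient space $\mathbb{R}^D$, where the connection $\nabla$ is ordinary componentwise differentiation. Fix $x\in\mathcal{M}$ and $u\in T_x\mathcal{M}$. By Lemma~\ref{lem:injectivity radius} the global injectivity radius satisfies $\mathrm{inj}_\mathcal{M}\ge\pi\tau>0$, so whenever $\|u\|<\pi\tau$ the curve $\gamma(t):=\exp_x(tu)$ is a well-defined geodesic on $\mathcal{M}$ for $t\in[0,1]$, of class $C^3$ as a curve in $\mathbb{R}^D$ (the induced metric is $C^2$, so the geodesic ODE has $C^1$ coefficients and $\gamma\in C^3$), with $\gamma(0)=x$, $\gamma'(0)=u$, and $\exp_x(u)=\gamma(1)$. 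Taylor's theorem with Lagrange remainder, applied componentwise in $\mathbb{R}^D$, gives
\[
\exp_x(u)=\gamma(1)=x+u+\tfrac12\gamma''(0)+R,\qquad \|R\|\le\tfrac16\sup_{t\in[0,1]}\|\gamma'''(t)\|.
\]

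To identify $\gamma''(0)$, observe that the ordinary second derivative of $\gamma$ in $\mathbb{R}^D$ coincides with the ambient covariant derivative $\nabla_{\gamma'}\gamma'$ along $\gamma$. The Gauss formula decomposes this into tangential and normal parts, $\nabla_{\gamma'}\gamma'=\nabla^{\mathcal{M}}_{\gamma'}\gamma'+\Pi_{\gamma(t)}(\gamma',\gamma')$, and since $\gamma$ is an $\mathcal{M}$-geodesic the tangential part $\nabla^{\mathcal{M}}_{\gamma'}\gamma'$ vanishes identically. Hence $\gamma''(t)=\Pi_{\gamma(t)}\bigl(\gamma'(t),\gamma'(t)\bigr)$, so in particular $\gamma''(0)=\Pi_x(u,u)$. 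Substituting into the expansion above yields $\exp_x(u)=x+u+\tfrac12\Pi_x(u,u)+R$, which is the claimed identity modulo the remainder.

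It remains to show $\|R\|=O(\|u\|^3)$ uniformly in $x$ and $u$. Differentiating the identity $\gamma''=\Pi_\gamma(\gamma',\gamma')$ once more along $\gamma$, and using the geodesic equation together with $\gamma''=\Pi_\gamma(\gamma',\gamma')$ to remove the tangential acceleration, expresses $\gamma'''$ as a universal tensorial contraction of the covariant derivative $\nabla\Pi$ and of $\Pi$ itself, paired with copies of $\gamma'$; since $|\gamma'|\equiv\|u\|$ is constant along a geodesic, each such term has norm $\lesssim\|u\|^3$. Because $\mathcal{M}$ is compact and $C^3$ with positive reach, Lemma~\ref{lem:second fundamental form bound} furnishes the uniform bound $\|\Pi\|_{\mathrm{op}}\le\tau^{-1}$ together with a uniform bound on $\|\nabla\Pi\|$, so $\sup_{t\in[0,1]}\|\gamma'''(t)\|\le C\|u\|^3$ for a constant $C=C(\mathcal{M},\tau)$, whence $\|R\|\le\tfrac{C}{6}\|u\|^3$. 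The threshold ``$\|u\|$ sufficiently small'' may be taken to be $\|u\|<\pi\tau$, uniformly over $\mathcal{M}$.

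The only delicate point is this last uniformity: the pointwise expansion is immediate from the Gauss formula and the geodesic equation, but turning the Lagrange remainder into a genuinely uniform $O(\|u\|^3)$ requires writing the third ambient derivative of the geodesic purely in terms of the bounded tensors $\Pi$ and $\nabla\Pi$ and then invoking compactness and $C^3$-regularity. Everything else is routine bookkeeping.
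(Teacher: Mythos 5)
Your proposal is correct and follows essentially the same route as the paper: Taylor-expand the geodesic $t\mapsto\exp_x(tu)$ at $t=0$ and identify the second derivative as $\Pi_x(u,u)$ via the Gauss formula together with the vanishing of the tangential acceleration $\nabla^{\mathcal{M}}_{\gamma'}\gamma'$. Your treatment of the remainder (expressing $\gamma'''$ through $\Pi$ and $\nabla\Pi$ and invoking compactness for a uniform $O(\|u\|^3)$ bound) is actually more explicit than the paper's, which leaves that step implicit.
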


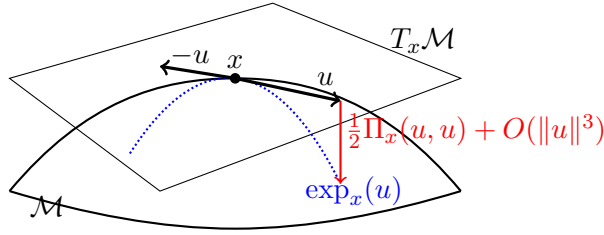
\begin{figure}[htbp]
\begin{tikzpicture}

\draw[fill=white, fill opacity=0.8] 
  (-2.5,1.5) -- (1,2.5) -- (3.5,1.5) -- (-0.5,0) -- cycle;

\node at (3,2) {$T_x\mathcal{M}$};

\draw[thick] 
  (-2.5,0) .. controls (-1.5,1.2) and (-0.5,1.5) .. (0.5,1.5)
  .. controls (1.5,1.5) and (2.5,1.2) .. (3.5,0);

\draw[thick] 
  (-2.5,0) .. controls (-1.5,-0.3) and (-0.5,-0.5) .. (0.5,-0.5)
  .. controls (1.5,-0.5) and (2.5,-0.3) .. (3.5,0);

\node at (-2,-0.2) {$\mathcal{M}$};

\fill (0.5,1.5) circle (2pt);
\node[above] at (0.5,1.5) {$x$};

\draw[densely dotted, blue, thick] 
  (-0.9,0.5) .. controls (0,1.9) and (1,1.9) .. (1.9,0.1);

\draw[->, very thick] 
  (0.5,1.5) -- (-0.5,1.66);
\node at (-0.1,1.8) {$-u$};
\draw[->, very thick] 
  (0.5,1.5) -- (1.9,1.2);
\node at (1.7,1.5) {$u$};

\draw[->, red, thick] 
  (1.9,1.2) -- (1.9,0.1);
\node[red] at (3.7,0.8) {$\frac{1}{2}\Pi_x(u,u) + O(\|u\|^3)$};

\node[blue, right] at (1.3,0) {$\exp_x(u)$};

\end{tikzpicture}
\caption{Exponential map and its Taylor expansions.}
\end{figure}

\begin{lem}\label{lem:volume element expansion}
For $x\in \mathcal{M}$, let $\{u_i\}_{i=1}^d$ be an orthonormal basis of $T_x\mathcal{M}$, and denote by $g_{ij}(u)$ the components of the metric in exponential coordinates around $x$. Then as $\|u\| \to 0$,
\[
\sqrt{\text{det} g_{ij}(u)} = 1 + O(\|u\|^2).
\]
\end{lem}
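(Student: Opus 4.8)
The plan is to compute the metric components $g_{ij}(u)$ directly from the embedding and read off the expansion, using the exponential‑map expansion of Lemma~\ref{lem:exponential map expansion}. Write $\Phi_x:=\exp_x\colon B_d(0,r_0)\to\mathcal{M}\subset\mathbb{R}^D$ for the exponential chart at $x$, which is defined on a ball of a radius $r_0<\pi\tau$ that is uniform in $x$ by Lemma~\ref{lem:injectivity radius}, and identify the point $p=\exp_x(u)$ with the coordinate tuple $(u^1,\dots,u^d)$ determined by $u=\sum_k u^k u_k$. Then the coordinate vector fields are the ambient vectors $\partial_i\Phi_x(u)\in\mathbb{R}^D$ and
\[
g_{ij}(u)=\big\langle \partial_i\Phi_x(u),\,\partial_j\Phi_x(u)\big\rangle .
\]
By Lemma~\ref{lem:exponential map expansion} (in its first‑order differentiated form, legitimate since $\mathcal{M}$ is $C^3$), one has $\Phi_x(u)=x+\sum_k u^k u_k+\tfrac12\Pi_x(u,u)+R_x(u)$ with $R_x(u)=O(\|u\|^3)$ and $\partial_i R_x(u)=O(\|u\|^2)$, so that $\partial_i\Phi_x(u)=u_i+\Pi_x(u_i,u)+\partial_i R_x(u)$.

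Substituting into the inner product, the leading term is $\langle u_i,u_j\rangle=\delta_{ij}$, and the first‑order cross terms $\langle u_i,\Pi_x(u_j,u)\rangle+\langle\Pi_x(u_i,u),u_j\rangle$ vanish identically because $\Pi_x$ is valued in $T_x^\perp\mathcal{M}$ whereas $u_i,u_j\in T_x\mathcal{M}$. The remaining contributions $\langle\Pi_x(u_i,u),\Pi_x(u_j,u)\rangle$, $\langle u_i,\partial_j R_x(u)\rangle$, etc., are all $O(\|u\|^2)$, with constant controlled by $\|\Pi_x\|_{\mathrm{op}}\le\tau^{-1}$ (Lemma~\ref{lem:second fundamental form bound}) together with the uniform remainder bound. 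Hence $g_{ij}(u)=\delta_{ij}+E_{ij}(u)$ with $\max_{i,j}|E_{ij}(u)|\le C\|u\|^2$. (Equivalently, this is the classical fact that in normal coordinates $g_{ij}(0)=\delta_{ij}$ and $\partial_k g_{ij}(0)=0$, whence Taylor's theorem with the uniform $C^{1,1}$ bound on $g$ gives the same conclusion.) Since $A\mapsto\det(I_d+A)=1+\operatorname{Tr}(A)+O(\|A\|^2)$ is smooth near $A=0$, we obtain $\det g_{ij}(u)=1+O(\|u\|^2)$; composing with $t\mapsto\sqrt{1+t}=1+O(t)$ near $t=0$ yields $\sqrt{\det g_{ij}(u)}=1+O(\|u\|^2)$.

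The step I expect to require the most care is the \emph{uniformity} of every error term over $x\in\mathcal{M}$ and $\|u\|\le r_0$, since the paper's convention demands constants depending only on $\mathcal{M}$ and $\tau$. This rests on three inputs already available: compactness and $C^3$ regularity of $\mathcal{M}$, which by Lemma~\ref{lem:second fundamental form bound} yield a globally bounded second fundamental form and bounded covariant derivatives and hence a uniform remainder in the exponential expansion; and the uniform lower bound $\mathrm{inj}_\mathcal{M}\ge\pi\tau$ from Lemma~\ref{lem:injectivity radius}, which makes the chart domain uniform. A minor subtlety is the mild loss of derivatives in passing from a $C^3$ embedding to its exponential map; this is absorbed by invoking only the once‑differentiated form of Lemma~\ref{lem:exponential map expansion}, whose remainder is still $O(\|u\|^2)$ with a uniform constant.
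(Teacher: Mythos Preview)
Your proof is correct but follows a different route from the paper's. The paper argues \emph{intrinsically}: it uses that radial lines $c(t)=tu$ are geodesics in normal coordinates, plugs them into the geodesic equation to obtain $u^iu^j\Gamma_{ij}^k(0)=0$ for all $u$, deduces $\Gamma_{ij}^k(0)=0$, and from this extracts $\partial_k g_{ij}(0)=0$; the claim then follows by Taylor expansion of $\sqrt{\det g}$. Your argument is \emph{extrinsic}: you differentiate the ambient expansion of $\exp_x$ from Lemma~\ref{lem:exponential map expansion}, and the vanishing of the first-order term in $g_{ij}(u)$ comes from the orthogonality $\Pi_x(\cdot,\cdot)\perp T_x\mathcal{M}$ rather than from the geodesic equation. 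Both approaches yield $g_{ij}(u)=\delta_{ij}+O(\|u\|^2)$ and are equally valid here. Your version has the advantage of reusing Lemma~\ref{lem:exponential map expansion} directly and of making the uniformity in $x$ explicit (via $\|\Pi\|_{\mathrm{op}}\le\tau^{-1}$ and compactness), which the paper's proof leaves implicit; the paper's version is the classical coordinate computation and does not require the embedding or the second fundamental form at all.
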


\subsection{Hausdorff distance and tubular neighborhoods}
The distance between a point $a$ and a non-empty set $A$ is represented as $d(a,A) = \inf_{a^\prime\in A} \|a-a^\prime\|$. 
We adopt the \emph{Hausdorff distance} to measure the distance between two sets. 
\begin{definition}[Hausdorff distance] 
Let $ A_1$ and $ A_2$ be two non-empty subsets of $\mathbb{R}^D$. We define their Hausdorff distance $\operatorname{dist}_H(A_1,A_2)$ as
    $$
    \operatorname{dist}_H(A_1, A_2) = \max \{\sup_{a\in  A_1} \inf_{b\in  A_2}\|a-b\|,~\sup_{b\in  A_2} \inf_{a\in  A_1}\|a-b\|\}.
    $$
\end{definition}

For $\mathcal{M}$ with positive reach $\tau > 0$,
define the open $\tau$-tubular neighborhood of $\mathcal{M}$ in the normal bundle by
\[
  \mathcal{T}(\tau) := \{ (x,v) \in T^\perp \mathcal{M} : \|v\| < \tau \}.
\]
We identify $(x,v) \in \mathcal{T}(\tau)$ with the point $x + v \in \mathbb{R}^D$. For $0<\varepsilon<\tau$ we also introduce the closed tubular neighborhood
\[
  T(\varepsilon) := \{ (x,v) \in T^\perp \mathcal{M} : \|v\| \le \varepsilon \}.
\]
Since $\mathcal{M}$ is compact and $\tau > 0$, the set $T(\varepsilon)$ is compact.


For a non-empty closed set $A \subset \mathbb{R}^D$,  note that the nearest point to $A$ is unique and we denote the corresponding projection point by $\pi_A(z) := \arg \min_{a \in A} \|z - a\|$. By default, we write $\pi(z)$ for the nearest-point projection onto $\mathcal{M}$ whenever $d(z,\mathcal{M}) < \tau$.

The positive reach assumption ensures that the projection $\pi : \mathcal T(\tau) \to \mathcal{M}$ is well-defined. Since $\mathcal{M}$ is $C^3$, the tubular neighborhood theorem implies that $\pi$ is $C^{2}$ on $\mathcal T(\tau)$. In particular, for every $0 < \varepsilon < \tau$ and $m=0,1$ there exist constants $B_{\pi,\varepsilon,m} > 0$ such that
\[
  \sup_{y \in T(\varepsilon)} \|\nabla^m \pi(y)\|
  \le B_{\pi,\varepsilon,m}.
\]
Consequently, the normal vector $v_y := y - \pi(y)$ is $C^{2}$ in $y$, and its derivatives $\nabla v_y$ are uniformly bounded on $T(\tau -\varepsilon)$.

We will repeatedly use a simple Lipschitz-type estimate for tensor fields
on tubular neighborhoods.

\begin{lem}\label{lem:Lip-derivative}
Let $\Gamma \subset T(\tau)$ be compact and let $S$ be a $C^{m+1}$ tensor field
defined on an open neighborhood of $\Gamma$. Then $\nabla^m S$ is Lipschitz on
$\Gamma$: there exists a constant $C > 0$, depending only on $m$ and the upper bound of $\nabla^m S$, such that for all $x,y \in \Gamma$,
\[
  \bigl\|\nabla^m S(y) - \nabla^m S(x)\bigr\|_{\mathrm{op}} \le C \|y - x\|.
\]
In particular, taking $\Gamma = T(\tau - \varepsilon)$ and $x = \pi(y)$ we obtain
\[
  \bigl\|\nabla^m S(y) - \nabla^m S(\pi(y))\bigr\|_{\mathrm{op}}
  \le C \|y - \pi(y)\|
  = C \|v_y\|,
  \quad y \in T(\tau - \varepsilon).
\]
\end{lem}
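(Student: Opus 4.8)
The plan is that this is a direct consequence of compactness together with the fundamental theorem of calculus: there is no deep step, and the only care needed is to keep the segments we integrate along inside the open set where $S$ is smooth. Recall that in this paper $\nabla$ denotes the flat ambient derivative in $\mathbb{R}^D$, so $\nabla^m S$ is simply the $m$-th total derivative of $S$, viewed as a map into a fixed finite-dimensional tensor space; if one instead reads $\nabla$ as a bundle connection, the extra connection coefficients are smooth, hence bounded on compact sets, and the argument below is unchanged.

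First I would fix, by hypothesis, an open set $U \supset \Gamma$ on which $S$ is $C^{m+1}$, and—using that $\Gamma$ is compact and $U$ is open—choose a compact set $\Gamma'$ and a number $\delta>0$ with $\Gamma \subset \Gamma' \subset U$ such that the closed ball $\overline{B_D(x,\delta)}$ is contained in $\Gamma'$ for every $x \in \Gamma$. Since $\nabla^m S$ is $C^1$ on $U$, continuity on the compact set $\Gamma'$ yields finite bounds $L := \sup_{\Gamma'}\|\nabla^{m+1}S\|_{\mathrm{op}} < \infty$ and $M := \sup_{\Gamma'}\|\nabla^m S\|_{\mathrm{op}} < \infty$.

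Next I split into two cases for $x,y \in \Gamma$. If $\|y-x\|\le\delta$, the segment $\gamma(t)=x+t(y-x)$, $t\in[0,1]$, lies in $\Gamma'$, so $\tfrac{d}{dt}(\nabla^m S)(\gamma(t)) = (\nabla^{m+1}S)(\gamma(t))[y-x]$, and the fundamental theorem of calculus gives $\|\nabla^m S(y)-\nabla^m S(x)\|_{\mathrm{op}} \le L\|y-x\|$. If $\|y-x\|>\delta$, then crudely $\|\nabla^m S(y)-\nabla^m S(x)\|_{\mathrm{op}} \le 2M < (2M/\delta)\|y-x\|$. Taking $C=\max\{L,\,2M/\delta\}$ gives the stated Lipschitz bound on $\Gamma$.

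Finally, for the specialization to $\Gamma=T(\tau-\varepsilon)$ and $x=\pi(y)$, I would argue directly. Choose $\varepsilon'\in(0,\varepsilon)$ with $T(\tau-\varepsilon')$ contained in the open neighborhood where $S$ is smooth, and set $L:=\sup_{T(\tau-\varepsilon')}\|\nabla^{m+1}S\|_{\mathrm{op}}<\infty$. For $y\in T(\tau-\varepsilon)$ the normal segment $\gamma(s)=\pi(y)+s\,v_y$, $s\in[0,1]$, satisfies $\pi(\gamma(s))=\pi(y)$ and has normal part of norm $s\|v_y\|\le\tau-\varepsilon$, hence $\gamma([0,1])\subset T(\tau-\varepsilon)\subset T(\tau-\varepsilon')$; integrating $\nabla^{m+1}S$ along $\gamma$ as above gives $\|\nabla^m S(y)-\nabla^m S(\pi(y))\|_{\mathrm{op}} \le L\,\|v_y\|$, which is the claimed inequality since $\|v_y\|=\|y-\pi(y)\|$. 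The only ``obstacle''—really just bookkeeping—is ensuring the segments stay in the domain of smoothness: for a general compact $\Gamma$ this needs the two-case/chaining step above, while for the tube it follows from the fact that all points on a normal fibre project to the same base point, a consequence of the positive-reach assumption.
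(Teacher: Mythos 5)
Your proposal is correct. The paper in fact states this lemma without proof (it is treated as a standard Lipschitz-type estimate and no argument for it appears in Section 7), so your write-up supplies exactly the routine argument being taken for granted: a uniform bound on $\nabla^{m+1}S$ over a slightly enlarged compact set, the fundamental-theorem-of-calculus estimate along short segments, a crude $2M/\delta$ bound for distant pairs, and, for the tube, the observation that the normal segment from $\pi(y)$ to $y$ stays inside $T(\tau-\varepsilon)$ because every point on it projects back to $\pi(y)$. One small point worth flagging: the Lipschitz constant genuinely depends on the supremum of $\nabla^{m+1}S$ (and, in the general case, on $\delta$ and the supremum of $\nabla^{m}S$), not merely on ``the upper bound of $\nabla^m S$'' as the lemma's statement suggests; your proof handles this correctly, and the statement's phrasing is the imprecision, not yours.
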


\subsection{Probability preliminaries}
We first summarize that probability measures are denoted by $\bP$ (for a generic distribution), by $\bP_\mathcal{M}$ for a distribution supported on $\mathcal{M}$, and by $p_\sigma$ for the noisy distribution with noise level $\sigma > 0$. Expectations 
are written as $\mathbb{E}(\cdot)$. 

Recall the generic Chernoff bound for a random variable $X$ is attained by applying Markov's inequality to $e^{tX}$.
\begin{lem}[Chernoff bound \cite{chernoff1952measure}]\label{lem:Chernoff bound}
    If $X$ is a random variable, then for every $t>0$
    $$\bP(X \geq a)=\bP\left(e^{t X} \geq e^{t a}\right) \leq \frac{\bE\left(e^{t X}\right)}{e^{t a}}.$$
\end{lem}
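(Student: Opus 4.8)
The plan is to reduce the statement to Markov's inequality via a monotone change of variable. First I would record the elementary Markov inequality: if $Z\ge 0$ is a random variable and $c>0$, then $c\,\mathbf{1}\{Z\ge c\}\le Z$ pointwise, and taking expectations gives $c\,\bP(Z\ge c)\le\bE(Z)$, i.e. $\bP(Z\ge c)\le\bE(Z)/c$. (If $\bE(Z)=\infty$ the inequality is trivial.) Next, fix $t>0$ and observe that the map $s\mapsto e^{ts}$ is strictly increasing on $\mathbb{R}$, so for any $a$ the events $\{X\ge a\}$ and $\{e^{tX}\ge e^{ta}\}$ are literally the same event; hence $\bP(X\ge a)=\bP(e^{tX}\ge e^{ta})$, which is the claimed equality.

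It then remains to apply Markov's inequality to the nonnegative random variable $Z:=e^{tX}$ with threshold $c:=e^{ta}>0$. This yields $\bP(e^{tX}\ge e^{ta})\le \bE(e^{tX})/e^{ta}$, and combining with the equality above gives the asserted chain
\[
\bP(X\ge a)=\bP\!\left(e^{tX}\ge e^{ta}\right)\le \frac{\bE\!\left(e^{tX}\right)}{e^{ta}},
\]
valid for every $t>0$. There is no real obstacle here: the only points requiring (minor) care are that the transformation is monotone so that the two events coincide exactly, that $e^{tX}>0$ so Markov applies, and that the bound is only informative when the moment generating function $\bE(e^{tX})$ is finite — otherwise it holds vacuously. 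If desired, one may remark that optimizing over $t>0$ produces the sharp exponential form $\bP(X\ge a)\le\inf_{t>0}e^{-ta}\bE(e^{tX})$ used in the concentration arguments later in the paper.
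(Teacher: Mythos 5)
Your proof is correct and follows exactly the route the paper indicates (it cites the lemma as obtained "by applying Markov's inequality to $e^{tX}$" and gives no further detail): monotonicity of $s\mapsto e^{ts}$ identifies the two events, and Markov's inequality applied to the nonnegative variable $e^{tX}$ gives the bound. Nothing is missing.
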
  
Note that the Hoeffding's inequality provides an upper bound on the probability that the average of independent random variables deviates from its expected value.
\begin{lem}[Hoeffding's inequality {\cite{hoeffding1963probability}}]
\label{lem:Hoeffding}
Let $X_1,\dots,X_n$ be independent real-valued random variables satisfying $a_i\le X_i\le b_i$ for each $i$. Set $\bar X_n=\frac{1}{n}\sum_{i=1}^n X_i$. Then for every $t>0$,
\[
\bP\big(\bar X_n - \mathbb{E}(\bar X_n) \ge t\big)
\le \exp\!\Big(-\frac{2n^2 t^2}{\sum_{i=1}^n (b_i-a_i)^2}\Big).
\]
In particular, if $b_i-a_i\le L$ for all $i$ then
\[
\bP\big(|\bar X_n - \mathbb{E}(\bar X_n)|\ge t\big)
\le 2\exp\Big(-\frac{2n t^2}{L^2}\Big).
\]
\end{lem}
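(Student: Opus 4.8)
\textbf{Proof proposal for Hoeffding's inequality (Lemma \ref{lem:Hoeffding}).}

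The plan is to follow the classical Chernoff--Cramér exponential moment method, invoking Lemma \ref{lem:Chernoff bound} together with the key one-dimensional estimate known as Hoeffding's lemma. First I would center the variables: set $Z_i := X_i - \mathbb{E}(X_i)$, so that each $Z_i$ is mean zero and takes values in an interval of length $b_i - a_i$ (namely $Z_i \in [a_i - \mathbb{E}(X_i), b_i - \mathbb{E}(X_i)]$). Then $\bar X_n - \mathbb{E}(\bar X_n) = \frac{1}{n}\sum_{i=1}^n Z_i$, and for $t > 0$ the event $\{\bar X_n - \mathbb{E}(\bar X_n) \ge t\}$ coincides with $\{\sum_{i=1}^n Z_i \ge nt\}$. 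Applying Lemma \ref{lem:Chernoff bound} to $X = \sum_i Z_i$ with threshold $a = nt$ and an optimization parameter $\lambda > 0$ gives
\[
\bP\Big(\sum_{i=1}^n Z_i \ge nt\Big) \le e^{-\lambda n t}\,\mathbb{E}\Big(e^{\lambda \sum_i Z_i}\Big) = e^{-\lambda n t}\prod_{i=1}^n \mathbb{E}\big(e^{\lambda Z_i}\big),
\]
where the last equality uses independence.

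The crucial ingredient is Hoeffding's lemma: if $W$ is a mean-zero random variable with $W \in [\alpha,\beta]$ almost surely, then $\mathbb{E}(e^{\lambda W}) \le \exp(\lambda^2(\beta-\alpha)^2/8)$ for all $\lambda \in \mathbb{R}$. I would prove this sub-lemma by the standard convexity argument: since $s \mapsto e^{\lambda s}$ is convex, for $s \in [\alpha,\beta]$ one has $e^{\lambda s} \le \frac{\beta - s}{\beta - \alpha}e^{\lambda\alpha} + \frac{s - \alpha}{\beta - \alpha}e^{\lambda\beta}$; taking expectations and using $\mathbb{E}(W) = 0$ yields $\mathbb{E}(e^{\lambda W}) \le \frac{\beta}{\beta-\alpha}e^{\lambda\alpha} - \frac{\alpha}{\beta-\alpha}e^{\lambda\beta} =: e^{\psi(\lambda)}$. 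A direct computation shows $\psi(0) = \psi'(0) = 0$ and $\psi''(\lambda) \le (\beta-\alpha)^2/4$ uniformly (the second derivative is a variance of a Bernoulli-type random variable, hence bounded by a quarter of the range squared), so Taylor's theorem with remainder gives $\psi(\lambda) \le \lambda^2(\beta-\alpha)^2/8$.

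Applying the sub-lemma to each $Z_i$ (with range length $b_i - a_i$) gives $\mathbb{E}(e^{\lambda Z_i}) \le \exp(\lambda^2(b_i-a_i)^2/8)$, hence
\[
\bP\Big(\sum_{i=1}^n Z_i \ge nt\Big) \le \exp\!\Big(-\lambda n t + \frac{\lambda^2}{8}\sum_{i=1}^n (b_i - a_i)^2\Big).
\]
Optimizing the exponent over $\lambda > 0$ — the quadratic $-\lambda nt + \frac{\lambda^2}{8}S$ with $S := \sum_i (b_i-a_i)^2$ is minimized at $\lambda^\star = 4nt/S$, giving minimum value $-2n^2t^2/S$ — yields exactly $\bP(\bar X_n - \mathbb{E}(\bar X_n) \ge t) \le \exp(-2n^2 t^2 / \sum_i (b_i - a_i)^2)$. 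The two-sided statement follows by applying the one-sided bound to $-X_i$ (which has the same range length) and a union bound, and the specialization $b_i - a_i \le L$ gives $\sum_i(b_i-a_i)^2 \le nL^2$, hence the stated $2\exp(-2nt^2/L^2)$.

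The only genuine obstacle here is establishing Hoeffding's lemma cleanly — specifically the bound $\psi''(\lambda) \le (\beta-\alpha)^2/4$; everything else (centering, Chernoff, independence factorization, optimizing a quadratic) is routine. One must be slightly careful that the claim is stated for $t > 0$ and the Chernoff bound requires $\lambda > 0$, so the optimal $\lambda^\star = 4nt/S$ is indeed positive; no issue arises.
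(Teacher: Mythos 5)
Your proof is correct and complete: the centering, the Chernoff/Cramér exponential moment step via Lemma \ref{lem:Chernoff bound}, the convexity proof of Hoeffding's lemma with the $\psi''\le(\beta-\alpha)^2/4$ variance bound, the optimization $\lambda^\star=4nt/S$ yielding exponent $-2n^2t^2/S$, and the two-sided specialization all check out. Note that the paper does not prove this lemma at all — it simply cites \cite{hoeffding1963probability} — so there is no authorial argument to compare against; your write-up is the standard textbook derivation and would serve as a valid self-contained proof.
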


Hence we prove the following lemma to control the deviation between the bounded weighted average of independent random vectors and its expectation. 
\begin{lem}
\label{lem:nonasymp_hoeffding}
Let $\{Y_i\}_{i=1}^n$ be i.i.d. random vectors in $\mathbb{R}^D$ and let $W:\mathbb{R}^D\to[0,\infty)$ be a measurable weight function. Define
\[
S_n:=\frac{1}{n}\sum_{i=1}^n W(Y_i)Y_i,\qquad
B_n:=\frac{1}{n}\sum_{i=1}^n W(Y_i),
\]
and set $s:=\mathbb{E}(W(Y)Y)$, $b:=\mathbb{E}(W(Y))>0$, $\widehat\mu_n := S_n / B_n$ and $\widehat\mu_w=s/b$. Assume that
\begin{enumerate}
  \item $\{Y_i:W(Y_i)>0\}\subset B_D(z,r)$ for all $i$ and some $z\in\mathbb{R}^D$ and $r>0$;
  \item $0\le W(Y_i)\le M$ for all $i$;
  \item there exists $K>0$ such that for every coordinate $j=1,\dots,D$ and every $i$,
  \[
  \big|(W(Y_i)Y_i)^{(j)} - \mathbb{E}((W(Y)Y)^{(j)})\big|\le K.
  \]
\end{enumerate}
Then for any $\varepsilon>0$, we obtain
\begin{equation}\label{eq:combined_tail}
\bP\big(\|\widehat\mu_n-\widehat\mu_w\| > \varepsilon\big)
\le 2\exp\!\Big(-\frac{n b^2}{2M^2}\Big)
+ 2D\exp\!\Big(-\frac{n b^2\varepsilon^2}{32 D K^2}\Big)
+ 2\exp\!\Big(-\frac{n b^4\varepsilon^2}{8\|s\|^2 M^2}\Big).
\end{equation}

\end{lem}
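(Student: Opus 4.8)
\textbf{Proof proposal for Lemma~\ref{lem:nonasymp_hoeffding}.}

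The plan is to control $\|\widehat\mu_n-\widehat\mu_w\|$ by splitting the deviation of the ratio $S_n/B_n$ from $s/b$ into a numerator part and a denominator part, and then to bound each part by a Hoeffding-type estimate under the stated boundedness hypotheses. First I would write the algebraic identity
\[
\widehat\mu_n-\widehat\mu_w=\frac{S_n}{B_n}-\frac{s}{b}
=\frac{1}{B_n}\bigl(S_n-s\bigr)+s\Bigl(\frac{1}{B_n}-\frac1b\Bigr)
=\frac{1}{B_n}\bigl(S_n-s\bigr)+\frac{s}{b}\cdot\frac{b-B_n}{B_n},
\]
so that on the event $\{B_n\ge b/2\}$ one has
\[
\|\widehat\mu_n-\widehat\mu_w\|\le \frac{2}{b}\,\|S_n-s\|+\frac{2\|s\|}{b^2}\,|B_n-b|.
\]
Thus $\bP(\|\widehat\mu_n-\widehat\mu_w\|>\varepsilon)$ is bounded by $\bP(B_n<b/2)$ plus $\bP(\|S_n-s\|>b\varepsilon/4)$ plus $\bP(|B_n-b|>b^2\varepsilon/(4\|s\|))$, which matches the three terms in \eqref{eq:combined_tail}.

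Next I would bound each of the three pieces. For $\bP(B_n<b/2)$: since $B_n$ is an average of the i.i.d.\ variables $W(Y_i)\in[0,M]$ with mean $b$, the event $\{B_n<b/2\}$ is contained in $\{B_n-b\le -b/2\}$, and the two-sided Hoeffding inequality (Lemma~\ref{lem:Hoeffding}) with range $L=M$ and deviation $t=b/2$ gives $\bP(B_n<b/2)\le 2\exp(-nb^2/(2M^2))$, the first term. For $\bP(\|S_n-s\|>b\varepsilon/4)$: I would pass to coordinates using $\|S_n-s\|\le\sqrt{D}\max_j|(S_n-s)^{(j)}|$, so $\{\|S_n-s\|>b\varepsilon/4\}\subset\bigcup_{j=1}^D\{|(S_n-s)^{(j)}|>b\varepsilon/(4\sqrt{D})\}$; each coordinate $(S_n)^{(j)}$ is an average of the i.i.d.\ variables $(W(Y_i)Y_i)^{(j)}$, which by hypothesis~(3) deviate from their common mean by at most $K$, hence lie in an interval of length at most $2K$. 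Applying Lemma~\ref{lem:Hoeffding} with $L=2K$ and $t=b\varepsilon/(4\sqrt D)$ and a union bound over the $D$ coordinates yields $\bP(\|S_n-s\|>b\varepsilon/4)\le 2D\exp(-nb^2\varepsilon^2/(32DK^2))$, the second term. For $\bP(|B_n-b|>b^2\varepsilon/(4\|s\|))$: here I must observe that $W(Y_i)Y_i$ is supported in $B_D(z,r)$ and $W(Y_i)\le M$, so we may estimate $W(Y_i)=\|W(Y_i)Y_i\|/\|Y_i\|$ or, more directly, bound the range of $W(Y_i)$ in terms of $\|s\|$ and $M$; the cleanest route is to note $|W(Y_i)-b|\le \tfrac{2M^2}{\|s\|}$-type control is \emph{not} automatic, so instead I would apply Lemma~\ref{lem:Hoeffding} directly to $B_n$ with range $L=M$ and deviation $t=b^2\varepsilon/(4\|s\|)$, giving $2\exp(-nb^4\varepsilon^2/(8\|s\|^2 M^2))$, the third term. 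Summing the three bounds proves \eqref{eq:combined_tail}.

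I expect the only subtle point to be the bookkeeping of constants so that the three exponents come out exactly as stated — in particular the choice to allocate the budget $\varepsilon$ as $b\varepsilon/4$ to the numerator and $b^2\varepsilon/(4\|s\|)$ to the denominator term, together with the factor $\sqrt D$ lost in the coordinatewise reduction, which is what produces the $32DK^2$ and $8\|s\|^2M^2$ in the denominators. Hypotheses~(1) and~(2) (the support and boundedness of the weights) are what guarantee the ranges $M$ and $2K$ are finite and uniform in $i$, so that Hoeffding applies; hypothesis~(3) is used precisely in the coordinatewise step for the numerator. No concentration beyond Hoeffding is needed, and the i.i.d.\ assumption is used only to invoke Lemma~\ref{lem:Hoeffding}. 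The decomposition on the event $\{B_n\ge b/2\}$ versus its complement is the one genuine idea; everything else is a routine union bound.
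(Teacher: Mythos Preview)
Your proposal is correct and follows essentially the same route as the paper: the same algebraic decomposition of $\widehat\mu_n-\widehat\mu_w$, the same restriction to the good event $\{B_n\ge b/2\}$ with the budget split $b\varepsilon/4$ and $b^2\varepsilon/(4\|s\|)$, and the same three Hoeffding applications (range $M$ for $B_n$, coordinatewise with range $2K$ and the $\sqrt{D}$ loss for $S_n$). The only cosmetic difference is that the paper phrases the good event as $\{|B_n-b|\le b/2\}$ rather than $\{B_n\ge b/2\}$, which does not affect the bound.
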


\section{Normalized local kernel estimator}\label{sec:Local normalized kernel estimator}
This section analyzes the asymptotic properties of the local kernel estimator \eqref{eq:def:F(z)} at the population level. We derive expansions for the population density $p_\sigma$ and the corresponding score function $\nabla\log p_\sigma(z)$, which encode the curvature information of the underlying manifold. 
Finally, we prove that the population-level estimator yields an estimated manifold sufficiently close to the underlying manifold.

Recall that for any $r> 0$, let $\varphi_r:\mathbb{R}^D\to(0,\infty)$ be  a smooth compact supported rotation-invariant kernel with bandwidth $r$, for example, a truncated Gaussian kernel
\begin{equation}\label{eq:truncated Gaussian kernel-se3}
    \varphi_r(z) =\phi_r (z) \chi\!\big(\tfrac{\|z\|}{\sqrt{2}r}\big)=({2\pi r^2})^{-\frac{D}{2}}e^{\frac{-\|z\|^2}{2r^2}}\,\chi\!\big(\tfrac{\|z\|}{\sqrt{2}r}\big),
\end{equation}
where \(\chi:[0,\infty)\to[0,1]\) is a smooth cutoff function satisfying
\[
\chi(t)=1\ \text{ for } 0\le t\le \rho_0<1,\qquad
\chi(t)=0\ \text{ for } t\ge 1.
\]
Then $\varphi_r\in C_0^{\infty}(B_D(0,\sqrt{2}r))$. Based on observed samples $\{y_i\}_{i=1}^N$ as defined in \eqref{eq:Add_model}, we can construct a local kernel estimator
$F:\mathbb{R}^D\to\mathbb{R}^D$ as
\begin{equation}\label{eq:def:F(z)-se3}
F(z):=\frac{\sum_{j=1}^{N}\varphi_r(y_j-z)y_j}{\sum_{j=1}^{N}\varphi_r(y_j-z)}.
\end{equation}


\subsection{Asymptotic analysis for the population-level estimator}
As the sample size increases, the sample-level normalized estimator converges to its population-level estimator 
$${\mu}_z :=\frac {\bE(\varphi_r(Y-z)Y)}{\bE(\varphi_r(Y-z))}=\frac{\int_{B_D(z,\sqrt{2}r)} \varphi_r(y-z)\,y\,p_\sigma(y)\,dy}
           {\int_{B_D(z,\sqrt{2}r)} \varphi_r(y-z)\,p_\sigma(y)\,dy}$$ 
with high probability, see details in Section 4. 

First, we consider the asymptotic behavior of its expectation with respect to the truncation radius $r$.

\begin{thm}\label{thm: loc average}
For a smooth density $p_\sigma:\mathbb{R}^D \to \mathbb{R}$, the expectation of the estimator $F:\mathbb{R}^D\to\mathbb{R}^D$ defined in \eqref{eq:def:F(z)-se3} satisfies the asymptotics:
\begin{equation}\label{eq:loc average}
    \mu_z=\frac{\int_{B_D(z,\sqrt{2}r)} \varphi_r(y-z)\,y\,p_\sigma(y)\,dy}
           {\int_{B_D(z,\sqrt{2}r)} \varphi_r(y-z)\,p_\sigma(y)\,dy}
    = z + r^2\,\frac{2B\,\nabla p_\sigma(z)}{A\,p_\sigma(z)} + O(r^4),
\end{equation}
where $A = \int_{B_D(0,1)} \varphi_1(\sqrt{2}u) \,du$ and $B = \!\int_{B_D(0,1)} (u^{(1)})^2 \varphi_1(\sqrt{2}u)\, du$ are constants depending on dimension $D$.
\end{thm}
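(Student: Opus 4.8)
The plan is to rescale the integration variable so that the kernel becomes the fixed weight $\varphi_1(\sqrt2\,\cdot)$ appearing in the constants $A$ and $B$, and then to Taylor-expand $p_\sigma$ about $z$, using the rotation invariance of the kernel to annihilate all odd moments. Concretely, I would substitute $y = z + \sqrt{2}\,r u$ in both the numerator and denominator of $\mu_z$. Since $\varphi_r(\sqrt{2}\,r u) = r^{-D}\varphi_1(\sqrt{2}\,u)$ and $\varphi_1(\sqrt{2}\,\cdot)$ is supported on $B_D(0,1)$, the Jacobian factors $(\sqrt{2}\,r)^D$ cancel in the ratio, and writing $y = z + \sqrt{2}\,r u$ in the numerator splits off a term equal to $z$ times the denominator. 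This reduces the claim to $\mu_z = z + \sqrt{2}\,r\, I_1/I_0$ with $I_0 := \int_{B_D(0,1)}\varphi_1(\sqrt{2}\,u)\,p_\sigma(z+\sqrt{2}\,ru)\,du$ and $I_1 := \int_{B_D(0,1)}\varphi_1(\sqrt{2}\,u)\,u\,p_\sigma(z+\sqrt{2}\,ru)\,du$.

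Next I would Taylor-expand $p_\sigma(z+\sqrt{2}\,ru) = p_\sigma(z) + \sqrt{2}\,r\,\nabla p_\sigma(z)\cdot u + r^2\,u^{\top}\nabla^2 p_\sigma(z)u + O(r^3\|u\|^3)$, valid uniformly for $\|u\|\le 1$ by smoothness of $p_\sigma$, and integrate term by term. In $I_0$, the linear term vanishes (odd moment of a radial weight), the quadratic term yields $r^2 B\,\Delta p_\sigma(z)$ using $\int_{B_D(0,1)} u^{(i)}u^{(j)}\varphi_1(\sqrt{2}\,u)\,du = B\,\delta_{ij}$, and — carrying the expansion one further order — the genuine $r^3$ moment is again odd and vanishes, so $I_0 = A\,p_\sigma(z) + r^2 B\,\Delta p_\sigma(z) + O(r^4)$. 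In $I_1$, the $p_\sigma(z)$ term vanishes by parity, the next term gives $\sqrt{2}\,r\,B\,\nabla p_\sigma(z)$ via $\int_{B_D(0,1)} u\,(u^{\top}a)\,\varphi_1(\sqrt{2}\,u)\,du = B\,a$, the following ($r^2$) term is $u$ times an even function of $u$, hence odd, and integrates to zero, and the remainder is $O(r^3)$; thus $\sqrt{2}\,r\,I_1 = 2r^2 B\,\nabla p_\sigma(z) + O(r^4)$.

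Finally I would assemble the quotient
\[
\mu_z - z = \frac{2r^2 B\,\nabla p_\sigma(z) + O(r^4)}{A\,p_\sigma(z) + r^2 B\,\Delta p_\sigma(z) + O(r^4)},
\]
factor $A\,p_\sigma(z)$ out of the denominator (legitimate since $p_\sigma(z)>0$), and expand $\bigl(1 + r^2\tfrac{B\,\Delta p_\sigma(z)}{A\,p_\sigma(z)} + O(r^4)\bigr)^{-1} = 1 + O(r^2)$. Multiplying through, the correction to the leading term is of order $r^4$, which gives $\mu_z = z + r^2\,\tfrac{2B\,\nabla p_\sigma(z)}{A\,p_\sigma(z)} + O(r^4)$.

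The computation itself is routine; the one point requiring care — and the \emph{main obstacle} — is the parity bookkeeping that upgrades the naive $O(r^3)$ remainders coming from a third-order Taylor expansion to the claimed $O(r^4)$. This is precisely where rotation invariance of $\varphi_r$ is used: all odd-order moments of the radial weight vanish, so the expansion effectively advances in powers of $r^2$. One must also be consistent with the $\sqrt{2}$-scaling relating $\varphi_r$ to $\varphi_1$ and to the definitions of $A$ and $B$, and note that positivity of the smooth density $p_\sigma$ makes the final division valid; for the uniform version needed later one additionally tracks uniform bounds on the derivatives of $p_\sigma$ controlling the $O(r^4)$ term.
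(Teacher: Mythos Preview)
Your proposal is correct and follows essentially the same approach as the paper: rescale $y=z+\sqrt{2}\,r u$, Taylor-expand $p_\sigma$ about $z$, and use the rotation invariance of $\varphi_1(\sqrt2\,\cdot)$ to kill odd moments so that the expansion proceeds in powers of $r^2$. The only cosmetic difference is that the paper treats the numerator by applying the denominator computation to the function $y\,p_\sigma(y)$ via the identity $\Delta(y\,p_\sigma(y)) = 2\nabla p_\sigma(y) + y\,\Delta p_\sigma(y)$, whereas you first subtract $z$ times the denominator and work with the residual integral $I_1$; the two computations are equivalent, and your explicit parity bookkeeping for the $O(r^3)\to O(r^4)$ upgrade is in fact more transparent than the paper's brief treatment of the remainder.
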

\begin{rem}
    According to the bandwidth $\sigma$ of noise, taking $r=c_D\sigma=(\frac{A}{2B})^{\frac{1}{2}}\sigma$ 
    we obtain
\[ \mu_z=z + \sigma^2\,\nabla\log p_\sigma(z) + O(\sigma^4).\]  
The leading error term is governed by the score function $\nabla\log p_\sigma(z)$.

\end{rem}

The key of the proof lies in the local Taylor expansion of the density function, combined with the radial invariance of both the local kernel and the integration region. See details in Section 7.

\subsection{Density of noise samples on manifold}

Recall that $\mathcal{M} \subset \mathbb{R}^D$ is a compact, $C^3$-smooth $d$-dimensional submanifold endowed with the induced volume measure $d\mu$. Its volume is $\operatorname{Vol}(\mathcal{M})=\int_\mathcal{M}d\mu<\infty$. We consider the uniform probability distribution on $\mathcal{M}$ with density 
\[
\omega(x) = \frac{1}{\operatorname{Vol}(\mathcal{M})}, \;x\in \mathcal{M}.
\]
Adding isotropic Gaussian noise $\phi_\sigma\sim \mathcal{N}(0,\sigma^2 I_D)$ yields a probability distribution on $\mathbb{R^D}$ defined by the convolution
\begin{align}
\notag&p_{\sigma}(y)= \int_\mathcal{M} \phi_\sigma(y-x)\omega(x)d\mu(x) = \frac{1}{\operatorname{Vol}(\mathcal{M})(2\pi\sigma^2)^{\frac{D}{2}}} \int_\mathcal{M} e^{-\frac{\|y-x\|^2}{2\sigma^2}}d\mu(x). 
\end{align}
The closed tubular neighborhood of $\mathcal{M}$ with radius $\tau-\varepsilon>0$ is $$T(\tau-\varepsilon) =\{(x,v)\in T^\bot \mathcal{M}\mid\|v\|\le\tau-\varepsilon\}.$$
For point $y\in T(\tau)$, let $\pi(y)$ denote its projection onto $\mathcal{M}$ and $v_y = y-\pi(y)$ the corresponding normal displacement. The following theorem gives an expansion of $p_\sigma(y)$ in terms of $\|v_y\|$ and $\sigma$.

\begin{thm}\label{th:probability noisy}
Let $\mathcal{M}\subset\mathbb{R}^D$ be a compact $C^3$ submanifold of dimension $d$ with reach $\tau>0$.
For any $\varepsilon\in(0,\tau)$ there exists constant $\sigma_0>0$ such that for all $y\in T(\tau-\varepsilon)$ and $0<\sigma\le\sigma_0$,
\begin{equation}\label{eq:probability noisy}
 p_\sigma(y)
    = \frac{1}{\operatorname{Vol}(\mathcal{M}) (2\pi\sigma^2)^{\frac{D-d}{2}}}
    e^{-\frac{\|v_y\|^2}{2\sigma^2}}
    \frac{1}{\sqrt{\det A_y}}
      \left(1 + O(\sigma\|v_y\| + \sigma^2)\right),
\end{equation}
where $A_y = I_d - \langle v_y, \Pi_{\pi(y)} \rangle$ and $\Pi_{\pi(y)}$ is the second fundamental form of $\mathcal{M}$ at the projection $\pi(y)\in\mathcal{M}$. The $O(\cdot)$ term is uniform in $y\in T(\tau-\varepsilon)$ and $\sigma\in(0,\sigma_0]$.
\end{thm}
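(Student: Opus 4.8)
The plan is to localize the integral $\int_{\mathcal M}e^{-\|y-x\|^2/(2\sigma^2)}\,d\mu(x)$ to a small geodesic ball about $\pi(y)$, pass to exponential coordinates there, and evaluate the resulting Gaussian integral by Laplace's method, while carrying every $O(\cdot)$ constant in terms of $\mathcal M,\tau,\varepsilon$ only.

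First I would establish the basic exponent expansion. Fix $y\in T(\tau-\varepsilon)$, set $p=\pi(y)$ and $v=v_y\perp T_p\mathcal M$ with $\|v\|\le\tau-\varepsilon$. For $u\in T_p\mathcal M$ small, Lemma~\ref{lem:exponential map expansion} gives $\exp_p(u)=p+u+\tfrac12\Pi_p(u,u)+O(\|u\|^3)$; since $\langle v,u\rangle=0$ and $u\perp\Pi_p(u,u)$, expanding the square yields
\[
\|y-\exp_p(u)\|^2=\|v\|^2+u^\top A_y u+\rho(u),\qquad A_y=I_d-\langle v,\Pi_p\rangle,
\]
with remainder bound $|\rho(u)|\le C(\|v\|+\|u\|)\|u\|^3$. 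By Lemma~\ref{lem:second fundamental form bound}, $\langle v,\Pi_p\rangle$ has operator norm $\le\|v\|/\tau\le 1-\varepsilon/\tau$, so $A_y$ is symmetric with eigenvalues in $[\varepsilon/\tau,\,2-\varepsilon/\tau]$, uniformly in $y$; this is precisely the origin of the factor $1/\sqrt{\det A_y}$ in \eqref{eq:probability noisy}.

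Next I would combine this with positivity of the reach and compactness of $\mathcal M$ to obtain a quantitative separation: constants $r_0,c,c'>0$ depending only on $\mathcal M,\tau,\varepsilon$ with $\|y-x\|^2\ge\|v\|^2+c\,d_\mathcal M(x,p)^2$ when $d_\mathcal M(x,p)\le r_0$, and $\|y-x\|^2\ge\|v\|^2+c'$ when $d_\mathcal M(x,p)\ge r_0$ (the first from $u^\top A_y u\ge(\varepsilon/\tau)\|u\|^2$ absorbing $\rho$, the second from compactness plus uniqueness of nearest points within reach $\tau$). Using Lemma~\ref{lem:injectivity radius} I may assume $\exp_p$ is a diffeomorphism on $B_d(0,r_0)$. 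I would then pick an intermediate scale $\delta=\delta(\sigma)$ in the window $\sigma\ll\delta\ll\sigma^{2/3}$ (say $\delta=\sigma^{3/4}$) and shrink $\sigma_0$ so that $\delta\le r_0$. The separation bounds make the contribution of $\{d_\mathcal M(x,p)>\delta\}$ equal to $e^{-\|v\|^2/(2\sigma^2)}\,O\!\big(e^{-c\delta^2/(2\sigma^2)}\big)$, negligible against any power of $\sigma$. On $B_\mathcal M(p,\delta)$ I substitute $x=\exp_p(u)$; Lemma~\ref{lem:volume element expansion} gives $d\mu=(1+O(\|u\|^2))\,du$, and since $|\rho(u)/(2\sigma^2)|\le C\delta^3/\sigma^2\to0$ on the ball I expand $e^{-\rho(u)/(2\sigma^2)}=1-\tfrac{\rho(u)}{2\sigma^2}+O(\rho(u)^2/\sigma^4)$, multiply by $1+O(\|u\|^2)$, extend the domain to $\mathbb R^d$ at a super-polynomially small cost, and integrate against $e^{-u^\top A_y u/(2\sigma^2)}$ using the uniform Gaussian moment bounds
\[
\int_{\mathbb R^d}e^{-u^\top A_y u/(2\sigma^2)}\,du=\frac{(2\pi\sigma^2)^{d/2}}{\sqrt{\det A_y}},\qquad
\int_{\mathbb R^d}e^{-u^\top A_y u/(2\sigma^2)}\|u\|^k\,du=\frac{(2\pi\sigma^2)^{d/2}}{\sqrt{\det A_y}}\,O(\sigma^k).
\]
The $O(\|u\|^2)$ volume term gives a relative $O(\sigma^2)$; the $\rho/\sigma^2$ term gives $\sigma^{-2}O(\sigma^3\|v\|+\sigma^4)=O(\sigma\|v\|+\sigma^2)$; the $\rho^2/\sigma^4$ term gives $\sigma^{-4}O(\sigma^6\|v\|^2+\sigma^8)=O(\sigma^2)$; assembling and multiplying by $\operatorname{Vol}(\mathcal M)^{-1}(2\pi\sigma^2)^{-D/2}$ produces \eqref{eq:probability noisy} with $(2\pi\sigma^2)^{(d-D)/2}$ in front.

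The hard part will be the quantitative localization: upgrading $u^\top A_y u\ge(\varepsilon/\tau)\|u\|^2$, which only controls $x$ near $p$, to the uniform gap $\|y-x\|^2\ge\|v\|^2+c\,d_\mathcal M(x,p)^2$ over \emph{all} $y\in T(\tau-\varepsilon)$ and all $x\in\mathcal M$ — this is exactly where positivity of the reach, not mere smoothness, is needed — together with choosing $\delta(\sigma)$ in the narrow window making the Gaussian tail super-polynomially small while keeping $\rho(u)/\sigma^2$ small on $B_d(0,\delta)$. Everything else is bookkeeping: checking that the remainders in Lemmas~\ref{lem:exponential map expansion} and~\ref{lem:volume element expansion}, the conditioning of $A_y$, and the separation constants are all uniform over the compact manifold $\mathcal M$ by $C^3$-regularity and $\|\Pi\|_{\mathrm{op}}\le\tau^{-1}$, hence uniform in $y\in T(\tau-\varepsilon)$ and $\sigma\in(0,\sigma_0]$.
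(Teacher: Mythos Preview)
Your proposal is correct and follows essentially the same Laplace-method approach as the paper: localize to exponential coordinates at $\pi(y)$, expand $\|y-\exp_p(u)\|^2=\|v\|^2+u^\top A_y u+\rho(u)$, bound the far-field contribution using the reach-induced positive definiteness of $A_y$, and integrate Gaussian moments term by term. The only minor technical variation is that the paper localizes to a \emph{fixed} radius $r_0$ and then rescales $z=u/\sigma$, whereas you introduce an intermediate $\sigma$-dependent scale $\delta=\sigma^{3/4}$; both devices serve the same purpose of making $\rho/\sigma^2$ controllable against the Gaussian weight, and your explicit two-regime separation bound is just a slightly more careful version of the paper's boundary argument for the tail $I_2$.
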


The proof is based on the Taylor expansions of the exponential map and the volume element in exponential coordinates around $\pi(y)$ (Lemma~\ref{lem:exponential map expansion} and \ref{lem:volume element expansion}).  The noisy density factorizes into a Gaussian term in the normal direction $\exp\!\Big(-\frac{\|v_y\|^2}{2\sigma^2}\Big)$, and a curvature-dependent correction $(\det A_y)^{-1/2}$, up to a remainder of order $O(\sigma \|v_y\| + \sigma^2)$. Thus the local behavior of $p_\sigma$ is governed jointly by the normal distance $\|v_y\|$ and the second fundamental form $\Pi_{\pi(y)}$ at the projection point.

For the analysis of derivatives, it is convenient to rewrite Theorem~\ref{th:probability noisy} at the level of the log-density and to record uniform bounds on the remainder and its derivatives.
\begin{cor}~\label{co:log density}
Assume $\mathcal{M}$ is a compact $C^3$ submanifold with positive reach $\tau>0$.
For any $\varepsilon\in(0,\tau)$, there exist constants $\sigma_0>0$ and $C_{\varepsilon,m}>0$ ($m=0,1$) such that for all
$y\in T(\tau-\varepsilon)$ and $0<\sigma\le\sigma_0$,
\begin{equation}~\label{eq:log density}
\log p_\sigma(y)
=
\log \bigl(\operatorname{Vol}(\mathcal{M})(2\pi\sigma^2)^{\frac{D-d}{2}}\bigl)^{-1}
-
\frac{\|v_y\|^2}{2\sigma^2}
-
\frac{1}{2}\log\det A_y
+
R(y,\sigma),
\end{equation}
where $R$ satisfies
\[
|R(y,\sigma)| \le C_{\varepsilon,0}\bigl(\sigma\|v_y\|+\sigma^2\bigr).
\]
And the ambient derivatives of $R$ with respect to $y$
satisfy
\[
\|\nabla_y R(y,\sigma)\|_{\text{op}}
\;\le\;
C_{\varepsilon,1}\sigma.
\]
\end{cor}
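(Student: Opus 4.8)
The plan is to deduce Corollary \ref{co:log density} directly from Theorem \ref{th:probability noisy} by taking logarithms and carefully tracking the remainder through one ambient differentiation. The starting point is the factorization \eqref{eq:probability noisy}: writing $p_\sigma(y) = \bigl(\operatorname{Vol}(\mathcal{M})(2\pi\sigma^2)^{\frac{D-d}{2}}\bigr)^{-1} e^{-\|v_y\|^2/(2\sigma^2)}(\det A_y)^{-1/2}\bigl(1 + E(y,\sigma)\bigr)$ with $E(y,\sigma) = O(\sigma\|v_y\| + \sigma^2)$ uniformly on $T(\tau-\varepsilon)$. Taking logs and setting $R(y,\sigma) := \log\bigl(1 + E(y,\sigma)\bigr)$ immediately gives the displayed identity \eqref{eq:log density}, and the pointwise bound $|R(y,\sigma)| \le C_{\varepsilon,0}(\sigma\|v_y\| + \sigma^2)$ follows from $|\log(1+t)| \le 2|t|$ for $|t| \le 1/2$, which holds once $\sigma_0$ is small enough that $|E| \le 1/2$ on the compact tube. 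This part is essentially bookkeeping.

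The substance is the derivative bound $\|\nabla_y R(y,\sigma)\|_{\mathrm{op}} \le C_{\varepsilon,1}\sigma$. Since $\nabla_y R = \nabla_y E / (1 + E)$ and $1+E$ is bounded away from zero, it suffices to show $\|\nabla_y E(y,\sigma)\|_{\mathrm{op}} \le C\sigma$ uniformly on $T(\tau-\varepsilon)$. The honest way to get this is to revisit the proof of Theorem \ref{th:probability noisy}: the remainder $E$ arises from Taylor-expanding the exponential map (Lemma \ref{lem:exponential map expansion}) and the volume element (Lemma \ref{lem:volume element expansion}) inside the Gaussian integral $\int_\mathcal{M} e^{-\|y-x\|^2/(2\sigma^2)}\,d\mu(x)$. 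Concretely, after localizing to a geodesic ball of radius $\sim \sigma\log(1/\sigma)$ around $\pi(y)$ (the Gaussian tail outside contributes $O(\sigma^\infty)$ with all derivatives, using the reach lower bound on injectivity radius, Lemma \ref{lem:injectivity radius}), one writes the integrand in exponential coordinates and expands. The dependence of $E$ on $y$ enters through $\pi(y)$, $v_y$, and the Gaussian weight $e^{-\|y-x\|^2/(2\sigma^2)}$; the map $y \mapsto \pi(y)$ is $C^2$ with bounded derivatives on $T(\tau-\varepsilon)$ (stated in the preliminaries), and $\nabla v_y$ is uniformly bounded there as well. Differentiating the Gaussian weight in $y$ produces a factor $(x - y)/\sigma^2$, which inside the localized integral is of size $\lesssim \log(1/\sigma)/\sigma$; but each such factor is multiplied by a term already carrying one power of $\|u\|$ or $\sigma$ from the Taylor remainder, so the net contribution stays $O(\sigma)$ up to logarithmic factors, and the logarithms are absorbed by shrinking the implicit constant's exponent or by noting they are dominated on $(0,\sigma_0]$. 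I would organize this as: (i) split $p_\sigma(y)$ into a main term $M(y,\sigma)$ and remainder; (ii) show $\nabla_y[p_\sigma(y)/M(y,\sigma)] = \nabla_y(1+E)$ is $O(\sigma)$ by differentiating under the integral sign, justified by dominated convergence on the compact tube; (iii) conclude via the quotient and chain rules.

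The main obstacle I anticipate is the derivative bound, specifically controlling the differentiated Gaussian factor $(x-y)/\sigma^2$ without losing a power of $\sigma$. The key observation that makes it work is that $E$ is not merely $O(\sigma\|v_y\|+\sigma^2)$ but is built from terms each of which, before integration against the Gaussian, carries an explicit power of the local coordinate $u = \exp_{\pi(y)}^{-1}(x)$ or of $v_y$; integrating $u^k e^{-\|u\|^2/(2\sigma^2)}$ against the Gaussian gives $O(\sigma^k)$, and differentiating brings down $u/\sigma^2$, shifting $\sigma^k \mapsto \sigma^{k-1}$, so a term that was $O(\sigma^2)$ becomes $O(\sigma)$ — exactly the claimed rate. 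One must be careful that the $\|v_y\|$-linear part of $E$, when differentiated in the direction that changes $v_y$, produces an $O(\sigma)$ term (since $\nabla_y\|v_y\|$ is $O(1)$ and the coefficient is $O(\sigma)$), and when differentiated in a direction that changes $\pi(y)$ it produces $O(\sigma\|v_y\|) = O(\sigma)$ on the tube. A secondary technical point is uniformity: all implicit constants must depend only on $\mathcal{M}$, $\tau$, and $\varepsilon$, which follows from the compactness of $T(\tau-\varepsilon)$ and the uniform $C^3$ bounds on curvature and their covariant derivatives guaranteed by Lemma \ref{lem:second fundamental form bound} together with compactness. I would end by remarking that the $m=1$ bound is all that is needed downstream (for the score-function expansion), so there is no need to push to higher-order derivatives of $R$.
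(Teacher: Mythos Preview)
Your proposal is correct and follows the same overall route as the paper: take logarithms of \eqref{eq:probability noisy} to obtain the identity \eqref{eq:log density} and the $m=0$ bound via $|\log(1+t)|\le 2|t|$, then appeal to the structure of the remainder coming out of the proof of Theorem~\ref{th:probability noisy} for the derivative bound. The paper packages your step (ii) more cleanly: rather than re-differentiating under the integral, it records from the proof of Theorem~\ref{th:probability noisy} that the multiplicative remainder already has the explicit form $R_P=\sigma\|v_y\|\,G_1(y,\sigma)+\sigma^2\,G_2(y,\sigma)$ with $G_1,G_2$ and their $y$-derivatives uniformly bounded on $T(\tau-\varepsilon)\times(0,\sigma_0]$ (these are the $F_i=J_i/J_0$, whose derivative bounds were established there by dominated convergence after the rescaling $z=u/\sigma$). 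The bound $\|\nabla_y R_P\|\le C(\sigma\|v_y\|+\sigma^2)\le C'\sigma$ then follows from the product rule, and $\nabla_y\log(1+R_P)$ from the chain rule, with no need to confront the $(x-y)/\sigma^2$ factor directly. Your localization at radius $\sigma\log(1/\sigma)$ is also unnecessary and is the sole source of the logarithmic losses you worry about; the paper localizes at a fixed radius $r_0$ independent of $\sigma$ (the tail is controlled by $\lambda_{\min}(A_y)\ge\varepsilon/\tau$), so after rescaling the domain becomes $\|z\|\le r_0/\sigma$ and extends to all of $\mathbb{R}^d$ with error $O(e^{-c/\sigma^2})$, and no logarithms ever appear.
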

Note that the log-density admits an expansion whose remainder is $O(\sigma\|v_y\|+\sigma^2)$, while the differentials with respect to $y$ only preserve an order of $O(\sigma)$.

\subsection{Score expansion}
Next, we want to derive an expansion for the score function
\[
s_\sigma(y) = \nabla \log p_\sigma(y), \qquad y \in T(\tau-\varepsilon).
\]
We first record the derivatives of $\pi(y)$ and $v_y$ with respect to the ambient coordinate $y$. Let $\nabla_\omega(\cdot)$ denote the Euclidean directional derivative in the direction $\omega \in {R}^D$, and let $\omega^\top$ denote the orthogonal projection of $\omega$ onto $T_{\pi(y)}M$.

\begin{lem}\label{lem:derivative of v pi}
    Let $\mathcal{M}\subset\mathbb{R}^D$ be a compact $C^3$ submanifold of dimension $d$ with reach $\tau>0$.
For any $y\in \mathcal T(\tau)$, $\omega\in \mathbb{R}^D$, the derivatives of $\pi(y)$ and $v_y$ follow
    \[
    \nabla_\omega\pi(y) = A_y^{-1} \omega^\top,\qquad\nabla_\omega v_y =\omega - A_y^{-1}\omega^\top.
    \]
\end{lem}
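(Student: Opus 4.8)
The two identities are equivalent, since $y=\pi(y)+v_y$ gives $\nabla_\omega v_y=\omega-\nabla_\omega\pi(y)$; so it suffices to show $\nabla_\omega\pi(y)=A_y^{-1}\omega^\top$. The plan is to differentiate the normal-bundle parametrization of the tubular neighborhood. Let $E:\mathcal T(\tau)\to\mathbb{R}^D$ be $E(x,v)=x+v$. By positive reach $E$ is a $C^2$ diffeomorphism onto its image, $\pi=p_1\circ E^{-1}$ where $p_1(x,v)=x$, and $E(\pi(y),v_y)=y$; hence $d\pi_y=dp_1\circ\bigl(dE_{(\pi(y),v_y)}\bigr)^{-1}$, and the whole computation reduces to evaluating $dE$ at $(p,v):=(\pi(y),v_y)$ and inverting it.

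\textbf{Computing $dE$.} I would use the horizontal–vertical splitting of $T_{(p,v)}T^\perp\mathcal{M}$ induced by the normal connection $\nabla^\perp$, encoding a tangent vector by a pair $(w,\eta)$ with $w\in T_p\mathcal{M}$ the base direction and $\eta\in T_p^\perp\mathcal{M}$ the vertical part. For the horizontal lift of $w$, take a curve $(x(t),v^\parallel(t))$ with $\dot x(0)=w$ and $v^\parallel$ parallel along $x$ in the normal bundle; the Weingarten formula $\nabla_w v^\parallel=-S_v w+\nabla^\perp_w v^\parallel=-S_v w$ yields $\frac{d}{dt}E(x(t),v^\parallel(t))|_0=w-S_v w$. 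For the vertical part, $\frac{d}{dt}E(p,v+t\eta)|_0=\eta$. Thus
\[
dE_{(p,v)}(w,\eta)=(I-S_v)\,w+\eta,
\]
with tangential component $(I-S_v)w$ and normal component $\eta$. Identifying $S_v$ with the $d\times d$ matrix $\langle v,\Pi_p\rangle$ in an orthonormal basis $\{e_i\}$ of $T_p\mathcal{M}$ via $\langle S_v e_i,e_j\rangle=\langle\Pi_p(e_i,e_j),v\rangle$, we get $I-S_v=A_y$. Solving $dE(w,\eta)=\omega=\omega^\top+\omega^\perp$ forces $(I-S_v)w=\omega^\top$ and $\eta=\omega^\perp$, so $w=A_y^{-1}\omega^\top$ and $\nabla_\omega\pi(y)=dp_1(w,\eta)=w=A_y^{-1}\omega^\top$; then $\nabla_\omega v_y=\omega-A_y^{-1}\omega^\top$. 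Invertibility of $A_y$ is exactly where the reach enters: $\|v_y\|<\tau$ together with $\|\Pi_{\pi(y)}\|_{\mathrm{op}}\le\tau^{-1}$ (Lemma~\ref{lem:second fundamental form bound}) gives $\|S_{v_y}\|_{\mathrm{op}}\le\|v_y\|/\tau<1$, so $A_y=I_d-\langle v_y,\Pi_{\pi(y)}\rangle$ is invertible by Neumann series, with $\|A_y^{-1}\|_{\mathrm{op}}\le(1-\|v_y\|/\tau)^{-1}$.

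\textbf{Main obstacle and alternative.} I expect the only delicate point to be bookkeeping: correctly identifying $T_{(p,v)}T^\perp\mathcal{M}\cong T_p\mathcal{M}\oplus T_p^\perp\mathcal{M}$ through $\nabla^\perp$ and fixing the sign in the Weingarten term; the rest is routine. An equally workable route avoids bundle language entirely: parametrize $\mathcal{M}$ near $p$ in geodesic normal coordinates $x(u)$, use that $\pi(y)$ solves the orthogonality system $\langle y-x(u),\partial_i x(u)\rangle=0$, and differentiate in $y$. The chain rule together with $\partial_i\partial_j x|_p=\Pi_p(e_i,e_j)$ produces the same linear system $A_y\,(\nabla_\omega u)=\omega^\top$; here the implicit function theorem applies precisely because $A_y$ is invertible, which simultaneously re-derives the $C^2$ regularity of $\pi$ recorded in the preliminaries.
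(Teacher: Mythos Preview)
Your proof is correct, and the key identity $dE_{(p,v)}(w,\eta)=(I-S_v)w+\eta$ is exactly right. The route, however, differs from the paper's. The paper works directly with the orthogonality constraint: along a curve $\gamma(t)$ with $\gamma(0)=y$, $\gamma'(0)=\omega$, it differentiates the identity $\langle v(t),Y(t)\rangle=0$ (valid for any tangent field $Y(t)\in T_{\pi(t)}\mathcal{M}$) to obtain $(\nabla_\omega v)^\top=-S_v(\nabla_\omega\pi(y))$, then combines this with the tangential part of $\nabla_\omega v=\omega-\nabla_\omega\pi(y)$ to get $(I_d-S_v)\nabla_\omega\pi(y)=\omega^\top$. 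This is precisely the ``alternative'' you sketch in your last paragraph.

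The two approaches are dual: you compute the forward differential of the tubular diffeomorphism $E$ and invert it, while the paper uses implicit differentiation of the condition characterizing $\pi$. Your route makes the geometric meaning of the invertibility of $A_y$ transparent (it is exactly the nondegeneracy of $dE$), and the operator bound $\|A_y^{-1}\|_{\mathrm{op}}\le(1-\|v_y\|/\tau)^{-1}$ drops out immediately. The paper's route avoids the horizontal--vertical splitting and the Weingarten sign bookkeeping you flagged, staying at the level of one scalar identity; it is slightly lighter on machinery but arrives at the same linear system.
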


The proof uses the characterization of $A_y$ in terms of the second fundamental form. 
Combining Lemma ~\ref{lem:derivative of v pi} with the log-density expansion in Corollary ~\ref{co:log density}, we obtain the following expression for the score $\nabla\log p_\sigma(y)$.

\begin{thm}\label{th:score_noisy}
Let $\mathcal{M}\subset\mathbb{R}^D$ be a compact $C^3$ submanifold of dimension $d$ with reach $\tau>0$.
For any $\varepsilon\in(0,\tau)$ there exists constant $\sigma_0>0$ such that for all $y\in T(\tau-\varepsilon)$ and $0<\sigma\le\sigma_0$,
\begin{equation}\label{eq:score}
    s_\sigma(y):=\nabla\log p_\sigma(y) = -\frac{v_y}{\sigma^2} + \frac{d}{2}\,H_{\pi(y)} +O\!\big(\|v_y\|+ \sigma \|v_y\|+\sigma^2\big),
\end{equation}
where $H_{\pi(y)}=\frac{1}{d}\operatorname{Tr}\!\big(\Pi_{\pi(y)}\big)$ denotes the mean curvature at $\pi(y)\in \mathcal{M}$. The $O(\cdot)$ term is uniform in $y\in T(\tau-\varepsilon)$ and $\sigma\in(0,\sigma_0]$.
\end{thm}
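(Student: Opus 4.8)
The plan is to obtain the score by differentiating, term by term, the log-density expansion of Corollary~\ref{co:log density},
\[
\log p_\sigma(y) = c(\sigma) - \frac{\|v_y\|^2}{2\sigma^2} - \frac12\log\det A_y + R(y,\sigma),
\]
where $c(\sigma)$ does not depend on $y$ and so drops out under $\nabla_y$, and where the corollary already supplies the uniform bound $\|\nabla_y R(y,\sigma)\|_{\mathrm{op}}\le C_{\varepsilon,1}\sigma$. Everything is a smooth function of $y$ on $T(\tau-\varepsilon)$: $\pi$, $v_y$, and hence $A_y$ are $C^2$ there, and $A_y$ is uniformly invertible since the reach bound forces $\|\langle v_y,\Pi_{\pi(y)}\rangle\|_{\mathrm{op}}\le\|v_y\|/\tau\le(\tau-\varepsilon)/\tau<1$, so $\|A_y^{-1}\|_{\mathrm{op}}\le\tau/\varepsilon$. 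It thus remains to differentiate the two geometric terms, for which the only new input is the formula $\nabla_\omega v_y = \omega - A_y^{-1}\omega^\top$ from Lemma~\ref{lem:derivative of v pi}.

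For the Gaussian term, fix $\omega\in\mathbb{R}^D$ and write $\omega^\perp:=\omega-\omega^\top\in T_{\pi(y)}^\perp\mathcal{M}$. Then $\nabla_\omega\!\bigl(-\|v_y\|^2/2\sigma^2\bigr) = -\sigma^{-2}\langle v_y,\nabla_\omega v_y\rangle = -\sigma^{-2}\bigl(\langle v_y,\omega\rangle - \langle v_y, A_y^{-1}\omega^\top\rangle\bigr)$, and the second inner product is exactly zero because $v_y\in T_{\pi(y)}^\perp\mathcal{M}$ while $A_y^{-1}\omega^\top\in T_{\pi(y)}\mathcal{M}$; this term therefore contributes precisely $-v_y/\sigma^2$, with no remainder, and this exact cancellation (rather than an approximate one) is what makes the leading term sharp. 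For the log-determinant term, differentiate $A_y = I_d - \langle v_y,\Pi_{\pi(y)}\rangle$:
\[
\nabla_\omega A_y = -\langle \nabla_\omega v_y,\Pi_{\pi(y)}\rangle - \langle v_y,\nabla_\omega\Pi_{\pi(y)}\rangle .
\]
Since $\Pi_{\pi(y)}$ is normal-valued, only the normal component $\omega^\perp$ of $\nabla_\omega v_y$ survives in the first term, giving $-\langle\omega^\perp,\Pi_{\pi(y)}\rangle$; the second term is $O(\|v_y\|)$ in operator norm, using the uniform bounds on $\Pi$ and its covariant derivative (Lemma~\ref{lem:second fundamental form bound}) together with the boundedness of $\nabla_\omega\pi(y) = A_y^{-1}\omega^\top$. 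Since $A_y^{-1} = I_d + O(\|v_y\|)$, Jacobi's formula gives $\nabla_\omega\!\bigl(-\tfrac12\log\det A_y\bigr) = -\tfrac12\operatorname{Tr}(A_y^{-1}\nabla_\omega A_y) = \tfrac12\operatorname{Tr}\langle\omega^\perp,\Pi_{\pi(y)}\rangle + O(\|v_y\|)$, and by the definition~\eqref{eq:meancurvature} of the mean curvature vector, $\operatorname{Tr}\langle\omega^\perp,\Pi_{\pi(y)}\rangle = \sum_i\langle\omega^\perp,\Pi_{\pi(y)}(e_i,e_i)\rangle = d\langle\omega^\perp,H_{\pi(y)}\rangle = d\langle\omega,H_{\pi(y)}\rangle$. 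Hence this term contributes $\tfrac d2 H_{\pi(y)}$ plus a vector of norm $O(\|v_y\|)$.

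Summing the three contributions yields $\nabla\log p_\sigma(y) = -v_y/\sigma^2 + \tfrac d2 H_{\pi(y)}$ up to an error that collects the $O(\|v_y\|)$ from the log-determinant term and the contribution of $\nabla_y R$, of the order recorded in Corollary~\ref{co:log density}; uniformity over $y\in T(\tau-\varepsilon)$ and $\sigma\in(0,\sigma_0]$ is inherited from that corollary and from the uniform curvature bounds (Lemma~\ref{lem:second fundamental form bound}) and Lipschitz estimates on the tube (Lemma~\ref{lem:Lip-derivative}). I do not expect a genuine obstacle, since the analytically delicate work has already been carried out in proving Theorem~\ref{th:probability noisy} and Corollary~\ref{co:log density}. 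The only points needing care are the bookkeeping of the tangent/normal splitting — so that $\langle v_y, A_y^{-1}\omega^\top\rangle$ vanishes exactly and $\langle\nabla_\omega v_y,\Pi_{\pi(y)}\rangle$ reduces to $\langle\omega^\perp,\Pi_{\pi(y)}\rangle$ — and the observation that differentiating the remainder $R$, which is only $O(\sigma\|v_y\|+\sigma^2)$ in magnitude, does not recover that full smallness, so one must invoke the separate derivative bound from Corollary~\ref{co:log density} rather than naively differentiating the size estimate.
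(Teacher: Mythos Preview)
Your proposal is correct and follows the paper's overall strategy exactly: differentiate the log-density expansion of Corollary~\ref{co:log density} term by term, use Lemma~\ref{lem:derivative of v pi} to show the Gaussian term yields exactly $-v_y/\sigma^2$ via the orthogonality $v_y\perp A_y^{-1}\omega^\top$, and invoke the separate derivative bound on $R$ rather than differentiating the size estimate.

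The one place you deviate is in handling $-\tfrac12\log\det A_y$. The paper does not apply Jacobi's formula at a general $y$; instead it sets $G(y)=-\tfrac12\log\det A_y$, computes $\nabla G(x)$ only for $x\in\mathcal{M}$ (where $A_x=I_d$) by differentiating along the normal curve $t\mapsto x+tu$ to obtain $\nabla G(x)=\tfrac{d}{2}H_x$, and then invokes the Lipschitz estimate of Lemma~\ref{lem:Lip-derivative} to pass to $\nabla G(y)=\tfrac{d}{2}H_{\pi(y)}+O(\|v_y\|)$. Your route via Jacobi's formula and the expansion $A_y^{-1}=I_d+O(\|v_y\|)$ reaches the same conclusion and is arguably more direct; the paper's route sidesteps the mild nuisance that $A_y$ acts on the moving subspace $T_{\pi(y)}\mathcal{M}$, so differentiating it and taking traces requires an implicit choice of frame, whereas evaluating at $x\in\mathcal{M}$ and then Lipschitz-extending avoids that bookkeeping entirely.
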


\begin{rem}
(1) The leading term $-{v_y}/{\sigma^2}$ is induced by the Gaussian noise and points towards the manifold along the normal direction. The term $\frac{d}{2}H_{\pi(y)}$ captures the contribution of the mean curvature, arising from the determinant of $\langle v,\Pi\rangle$ in $p_\sigma$. The remainder $O(\|v_y\|+\|v_y\|\sigma+\sigma^2)$ vanishes as $y$ approaches $\mathcal{M}$ and $\sigma$ tends to zero. 

(2) For points within the tubular neighborhood, the tangential component of $s_\sigma(y)$ is negligible compared to its normal component with sufficiently small noise. This shows that the score field is essentially a normal vector field in $T(\tau-\varepsilon)$.

(3) It follows from \eqref{eq:score} that a point \(y\) satisfying \(v_y = \sigma^2 \frac{d}{2} H_{\pi(y)}\) corresponds to a local maximum of the density along the normal direction. This yields a precise geometric characterization of the ridge set within ridge theory \cite{genovese2014nonparametric}.
\end{rem}


\subsection{Population-level output manifold}
Next, combining Theorem ~\ref{thm: loc average} with the score $\nabla\log p_\sigma(z)$ expansion in Theorem~\ref{th:score_noisy} we are ready to prove that our population-level estimator will output a manifold that is $O(\sigma^2)$ close to the underlying manifold $\mathcal M$.
Recall that 
the Hausdorff distance between two non-empty subsets 
$A_1, A_2\subset\mathbb R^D$ is defined by
$$\operatorname{dist}_H(A_1, A_2) = \max \{\sup_{a\in  A_1} \inf_{b\in  A_2}\|a-b\|,~\sup_{b\in  A_2} \inf_{a\in  A_1}\|a-b\|\}.$$

\begin{thm}
\label{thm:Population-level output manifold}
Let $\mathcal M\subset\mathbb R^D$ be a compact $C^3$ submanifold of dimension $d$
with reach $\tau>0$.  For any $\varepsilon\in(0,\tau)$ there exists constant $\sigma_0>0$ such that for the population mapping $\mu:T(\tau-\varepsilon)\to\mathbb R^D$, $z\mapsto\mu_z$ defined in \eqref{eq:loc average}, and all $0<\sigma\le\sigma_0$,
\begin{equation}
\mu_z=\pi(z)+\frac{d}{2}H_{\pi(z)}\sigma^2+O(\sigma^2),
\end{equation}
where $H_{\pi(z)}$ denotes the mean curvature 
at 
$\pi(z)\in\mathcal M$. Moreover, the 
output manifold
\[
\mu(T(\tau-\varepsilon))=\{\mu_z:\,z\in T(\tau-\varepsilon)\}
\]
satisfies
\[
\operatorname{dist}_H(\mu(T(\tau-\varepsilon)),\mathcal M)\le C \sigma^2.
\]
\end{thm}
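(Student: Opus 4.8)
The plan is to combine Theorem~\ref{thm: loc average} and Theorem~\ref{th:score_noisy} essentially verbatim, tracking the error terms carefully, and then to feed the resulting pointwise expansion into the definition of Hausdorff distance. Concretely, fix $z\in T(\tau-\varepsilon)$ and apply Theorem~\ref{thm: loc average} with the bandwidth $r=c_D\sigma=(A/2B)^{1/2}\sigma$, which (by the Remark following that theorem) gives
\[
\mu_z = z + \sigma^2\,\nabla\log p_\sigma(z) + O(\sigma^4).
\]
Then substitute the score expansion of Theorem~\ref{th:score_noisy},
\[
\nabla\log p_\sigma(z) = -\frac{v_z}{\sigma^2} + \frac{d}{2}H_{\pi(z)} + O(\|v_z\|+\sigma\|v_z\|+\sigma^2),
\]
so that $\sigma^2\nabla\log p_\sigma(z) = -v_z + \frac{d}{2}H_{\pi(z)}\sigma^2 + O(\sigma^2\|v_z\|+\sigma^2)$. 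Since $z = \pi(z)+v_z$, the two copies of $v_z$ cancel and we obtain
\[
\mu_z = \pi(z) + \frac{d}{2}H_{\pi(z)}\sigma^2 + O(\sigma^2\|v_z\|+\sigma^2+\sigma^4) = \pi(z) + \frac{d}{2}H_{\pi(z)}\sigma^2 + O(\sigma^2),
\]
using $\|v_z\|\le\tau-\varepsilon$ to absorb the $\sigma^2\|v_z\|$ term. Uniformity in $z$ is inherited directly from the uniformity statements in both theorems over $T(\tau-\varepsilon)$, and the choice of $\sigma_0$ can be taken as the minimum of the thresholds supplied by those two theorems.

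For the Hausdorff bound I would estimate the two one-sided suprema separately. For $\sup_{z\in T(\tau-\varepsilon)}\inf_{x\in\mathcal M}\|\mu_z-x\|$: from the pointwise expansion, $\|\mu_z-\pi(z)\| \le \frac{d}{2}\|H_{\pi(z)}\|\sigma^2 + O(\sigma^2) \le C\sigma^2$, where $C$ depends only on $\mathcal{M}$ via the uniform bound $\|H_x\|\le\|\Pi_x\|_{\mathrm{op}}\le\tau^{-1}$ from Lemma~\ref{lem:second fundamental form bound} and on the implied constant in the $O(\sigma^2)$ remainder; since $\pi(z)\in\mathcal M$, this gives $\inf_{x\in\mathcal M}\|\mu_z-x\|\le C\sigma^2$ for every $z$, hence the supremum is $\le C\sigma^2$. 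For the reverse direction $\sup_{x\in\mathcal M}\inf_{z}\|\mu_z-x\|$: pick $z=x$ (which lies in $T(\tau-\varepsilon)$, being on $\mathcal{M}$, so $v_z=0$ and $\pi(z)=x$); then $\|\mu_x-x\| = \|\frac{d}{2}H_x\sigma^2 + O(\sigma^2)\| \le C\sigma^2$, so $\inf_z\|\mu_z-x\|\le C\sigma^2$ uniformly in $x\in\mathcal M$. Taking the max of the two bounds yields $\operatorname{dist}_H(\mu(T(\tau-\varepsilon)),\mathcal M)\le C\sigma^2$.

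The only genuinely delicate point is bookkeeping on the error terms: one must confirm that the $O(\sigma^4)$ remainder from Theorem~\ref{thm: loc average} really is uniform over $z\in T(\tau-\varepsilon)$ — this requires that the density $p_\sigma$ and its low-order derivatives are bounded above and bounded away from zero uniformly on a fixed neighborhood, which is available from Theorem~\ref{th:probability noisy} (the Gaussian factor $e^{-\|v_z\|^2/2\sigma^2}$ is bounded below on $\|v_z\|\le\tau-\varepsilon$ only after one divides it out, so in fact the relevant ratio $\nabla p_\sigma/p_\sigma$ is what Theorem~\ref{th:score_noisy} controls, and that is exactly the quantity appearing in $\mu_z$). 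I would therefore phrase the argument so that Theorem~\ref{thm: loc average} is applied to get $\mu_z - z = \sigma^2 \nabla\log p_\sigma(z) + O(\sigma^4)$ directly, sidestepping any need to bound $p_\sigma(z)$ from below on its own. Everything else is a routine triangle-inequality argument, and the stated theorem follows. Note the remainder is honestly $O(\sigma^2)$ and not $o(\sigma^2)$, so no sharper cancellation is claimed or needed here; the finer $O(\sigma^3)$ tangential statement of Theorem~\ref{thm:intro} is a separate, more refined computation not required for this proposition.
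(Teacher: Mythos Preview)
Your proposal is correct and follows essentially the same route as the paper's own proof: apply Theorem~\ref{thm: loc average} with $r=c_D\sigma$ to get $\mu_z=z+\sigma^2\nabla\log p_\sigma(z)+O(\sigma^4)$, substitute the score expansion of Theorem~\ref{th:score_noisy}, cancel $v_z$ against $z-\pi(z)$, and then bound both one-sided Hausdorff suprema exactly as you describe (taking $z=x$ for the reverse direction). Your additional remarks on the uniformity of the $O(\sigma^4)$ remainder are a point the paper leaves implicit but does not address more carefully than you do.
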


\begin{rem}

Although the original manifold $\mathcal M$ is only $C^3$, the population-level estimator $z\mapsto\mu_z$ inherits \emph{infinite smoothness} from kernel $\varphi_r$, and the image $\mu(T(\tau-\varepsilon))$ is a $C^\infty$ manifold.

\end{rem}

\section{Sample-level approximation and sample complexity}

In this section we establish quantitative bounds ensuring that the sample-level
estimator \(F(z)\) converges to its population counterpart \(\mu_z\) with high
probability. Using concentration inequalities for bounded random variables,
we determine the sample-size scaling under which the stochastic deviation
becomes negligible compared with the deterministic bias \(O(\sigma^2)\)
obtained in the population-level analysis.

Recall that $\mathcal{M}$ is a compact, $C^3$-smooth $d$-dimensional submanifold of $\mathbb{R}^D$ with finite volume $\operatorname{Vol}(\mathcal{M})$ and positive reach $\tau$. Based on the Gaussian noisy sample $\mathcal Y = \{y_i\}_{i=1}^N$ defined in \eqref{eq:Add_model}, 
we construct a local kernel estimator
\begin{equation*}
F(z):=\frac{\sum_{j=1}^{N}\varphi_r(y_j-z)y_j}{\sum_{j=1}^{N}\varphi_r(y_j-z)},\;r:=c_D\sigma,
\end{equation*}
where $\varphi_r\in C_c^{\infty} (B_D(0,\sqrt{2}r))$.
Let $\mathcal Y_z = \{y_i\}_{i=1}^n$ ($n\le N$) be the sample that plays a role in $F(z)$, i.e., the sample that falls within the local ball $B_D(z,\sqrt{2}r)$.
And denote its population-level estimator by
$${\mu}_z :=\frac {\bE(\varphi_r(Y-z)Y)}{\bE(\varphi_r(Y-z))}.$$ 
We consistently work within the closed the tubular neighborhood of $\mathcal{M}$ as
\[\Gamma = \{(x,v)\in T^\bot\mathcal{M} : \|v\|\le C\sigma\}.\]
Assume that 
the noise bandwidth $\sigma$ is sufficiently small compared to the reach $\tau$ of 
$\mathcal{M}$. Specifically, 
suppose $\sigma \le \sigma_0:=\frac{\tau-\varepsilon}{C+\sqrt{2}c_D}$ for any fixed $\varepsilon\in(0,\tau)$, which ensures $B_D(z,\sqrt{2}r)\subset T(\tau-\varepsilon)$ for obtaining the uniform estimation in Section 3.


\subsection{Local sample complexity}
Since the weights in the estimator $F$ are bounded by the truncated kernel, we can apply Lemma \ref{lem:nonasymp_hoeffding} to obtain
the following theorem that states that the local sample size $n \ge O(\sigma^{-2d-5})$ can control the deviation between the sample-level and population-level estimators with high probability. 

\begin{thm}
\label{thm:Concentrate_with_n}
For any $z\in \Gamma$, let  $\mathcal Y_z=\{y_i\}_{i=1}^n$ be the noisy points observed in the local ball $B_D(z,\sqrt{2}r)$ with $r=c_D\sigma$ and $0<\sigma\le\sigma_0$. Then for $n \ge O(\sigma^{-2d-5})$, one has 
    $$\bP(\|F(z) - \mu_z\|\leq C_3\sigma^{3}) \geq 1 - C_1\exp({-C_2 \sigma^{-1}}),$$
    for some constants $C_1$, $C_2$ and $C_3>0$.
\end{thm}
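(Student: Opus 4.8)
The estimator $F(z)$ is a ratio of weighted empirical means, so the natural tool is the ratio-concentration bound of Lemma~\ref{lem:nonasymp_hoeffding}. I would recenter at $z$: writing $W(u):=\varphi_r(u)$, one has $F(z)-z=\bigl(\tfrac1N\sum_{j=1}^{N}W(y_j-z)(y_j-z)\bigr)\big/\bigl(\tfrac1N\sum_{j=1}^{N}W(y_j-z)\bigr)$, and since $\varphi_r$ is supported in $B_D(0,\sqrt{2}r)$ only the $n$ points of $\mathcal Y_z$ contribute (with $n\le N$). The population counterpart of this ratio is exactly $\mu_z-z$, since the local mass $\bP(Y\in B_D(z,\sqrt{2}r))$ cancels between numerator and denominator; hence, applying Lemma~\ref{lem:nonasymp_hoeffding} to the i.i.d.\ vectors $Y_j-z$ with weight $W$, one has $\|\widehat\mu_N-\widehat\mu_w\|=\|F(z)-\mu_z\|$ with $\widehat\mu_w=\mu_z-z$. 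I would take the tolerance $\varepsilon=C_3\sigma^3$ and show that each of the three terms in \eqref{eq:combined_tail} is $\le\exp(-C_2\sigma^{-1})$ up to constants.

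The core step is to determine the $\sigma$-scaling of the four parameters $M$, $K$, $b$, $\|s\|$ of Lemma~\ref{lem:nonasymp_hoeffding}. Since $\sigma\le\sigma_0$, the ball $B_D(z,\sqrt{2}r)$ with $r=c_D\sigma$ lies in $T(\tau-\varepsilon)$, so Theorem~\ref{th:probability noisy} applies uniformly there; and since $\|v_y\|\lesssim\sigma$ on this ball, the Gaussian factor $e^{-\|v_y\|^2/(2\sigma^2)}$, the determinant factor $(\det A_y)^{-1/2}$, and the remainder $1+O(\sigma\|v_y\|+\sigma^2)$ in \eqref{eq:probability noisy} are all bounded above and below by positive constants, giving $p_\sigma\asymp\sigma^{-(D-d)}$ on $B_D(z,\sqrt{2}r)$. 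From this: $M=\|\varphi_r\|_\infty=(2\pi c_D^2\sigma^2)^{-D/2}\asymp\sigma^{-D}$ (hypothesis (2)); $b=\bE(\varphi_r(Y-z))=\int\varphi_r(y-z)p_\sigma(y)\,dy\asymp\sigma^{-(D-d)}$, using $\int\varphi_r=\text{const}$; on the support of $W$ one has $\|y_j-z\|\le\sqrt{2}r$, so every coordinate of $W(y_j-z)(y_j-z)$ has magnitude at most $\sqrt{2}\,Mr$, giving hypothesis (3) with $K:=2\sqrt{2}\,Mr\asymp\sigma^{1-D}$; and by Theorem~\ref{thm:Population-level output manifold}, $\mu_z-z=-v_z+O(\sigma^2)$ with $\|v_z\|\le C\sigma$, so $\|s\|=\|\mu_z-z\|\,b\lesssim\sigma\cdot\sigma^{-(D-d)}=\sigma^{1-(D-d)}$. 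Hypothesis (1) is immediate from the support of $\varphi_r$.

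Plugging these into \eqref{eq:combined_tail} with $\varepsilon=C_3\sigma^3$, and using $b^2/M^2\asymp\sigma^{2d}$, $b^2/K^2\asymp\sigma^{2d-2}$, and $b^4/(\|s\|^2M^2)\gtrsim b^2/(\sigma^2M^2)\asymp\sigma^{2d-2}$, the three exponents become, up to fixed constants, of order $N\sigma^{2d}$, $N\sigma^{2d-2}\varepsilon^2=N\sigma^{2d+4}$, and $N\sigma^{2d+4}$. Since $N\ge n\ge O(\sigma^{-2d-5})$, each of these is at least of order $\sigma^{-1}$, so \eqref{eq:combined_tail} yields $\bP(\|F(z)-\mu_z\|>C_3\sigma^3)\le C_1\exp(-C_2\sigma^{-1})$ after the fixed kernel and manifold constants (and the dimensional factor $2D$) are absorbed into $C_1,C_2$.

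The main obstacle will be the second step, and within it the \emph{two-sided} bound $b\asymp\sigma^{-(D-d)}$: the upper bound is immediate, but the lower bound needs the lower estimate of Theorem~\ref{th:probability noisy} on $p_\sigma$ over $B_D(z,\sqrt{2}r)$ (valid because $\|v_y\|\lesssim\sigma$ there keeps all factors of \eqref{eq:probability noisy} bounded below) together with the fact that $\varphi_r$ coincides with the Gaussian $\phi_r$ on the plateau $B_D(0,\rho_0\sqrt{2}r)$, on which $\int\phi_r$ is a positive $\sigma$-independent constant. Equally essential is recognizing that recentering at $z$ is what reduces the numerator fluctuation scale to $K\asymp\sigma^{1-D}$ rather than $\sigma^{-D}$; without it the tolerance $C_3\sigma^3$ cannot be met at sample size $O(\sigma^{-2d-5})$. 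Everything else is routine bookkeeping. Finally, this is a pointwise bound for fixed $z$; the uniform version in Theorem~\ref{thm:intro} is obtained afterward by a covering/union-bound argument over a net of $\Gamma$, which accounts for the larger global budget $N=O(\sigma^{-3d-5})$ there.
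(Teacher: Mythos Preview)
Your proposal is correct and follows essentially the same route as the paper: both apply Lemma~\ref{lem:nonasymp_hoeffding} with weight $W=\varphi_r(\cdot-z)$, read off the $\sigma$-orders $M\asymp\sigma^{-D}$, $K\asymp\sigma^{1-D}$, $b\asymp\sigma^{-(D-d)}$, $\|s\|\lesssim\sigma^{1-(D-d)}$, take $\varepsilon=C_3\sigma^3$, and verify that all three exponents in \eqref{eq:combined_tail} are at least of order $\sigma^{-1}$ once the sample count exceeds $O(\sigma^{-2d-5})$. The one point where you are more careful than the paper is the recentering at $z$: the paper writes the unshifted $s=Cz\,p_\sigma(z)+O(r^2)$ yet then asserts $K=2rM$ and $\|s\|\le O(\sigma^{1-D+d})$, which are only correct after the shift you make explicit---so your remark that recentering is essential is well taken and in fact clarifies the paper's own computation. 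Your handling of the two-sided bound on $b$ via Theorem~\ref{th:probability noisy} and of the $N$ versus $n$ bookkeeping is also sound.
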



\subsection{Total sample complexity}
Next, by analyzing the density function \(p_\sigma\) we estimate the probability that the observation \(Y \sim  p_\sigma\) as defined in \eqref{eq:def:nu} falls in a local ball $B_D(z,\sqrt{2}r)$, and further derive the required total sample size $N$.
\begin{lem}
\label{Lemma:prob_in_a_ball}
For any $z\in \Gamma$ satisfying $B_D(z,\sqrt{2}r)\subset T(\tau-\varepsilon)$ with $r=c_D\sigma$ and $0<\sigma\le\sigma_0$, we obtain
\begin{equation}
    \mathbb{P}(Y\in B_D(z,\sqrt{2}r)) = c\, r^d(1+O(r)),
\end{equation}
where $Y$ is the observation defined in \eqref{eq:Add_model}. 
\end{lem}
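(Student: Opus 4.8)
The plan is to write $\mathbb{P}\big(Y\in B_D(z,\sqrt2r)\big)=\int_{B_D(z,\sqrt2r)}p_\sigma(y)\,dy$ and insert the local expansion of $p_\sigma$ from Theorem~\ref{th:probability noisy}, which applies uniformly since $B_D(z,\sqrt2r)\subset T(\tau-\varepsilon)$ by hypothesis. On this ball every $y$ satisfies $\|v_y\|\le\|v_z\|+\sqrt2r\le(C+\sqrt2c_D)\sigma=O(\sigma)$, so using $\|\Pi\|_{\mathrm{op}}\le\tau^{-1}$ the curvature factor obeys $(\det A_y)^{-1/2}=1+O(\sigma)$ and the remainder in \eqref{eq:probability noisy} is $1+O(\sigma\|v_y\|+\sigma^2)=1+O(\sigma)$, both uniformly in $y$ and in $z\in\Gamma$. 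This collapses the problem to a clean Gaussian integral:
\[
\mathbb{P}\big(Y\in B_D(z,\sqrt2r)\big)=\frac{1+O(\sigma)}{\operatorname{Vol}(\mathcal M)\,(2\pi\sigma^2)^{(D-d)/2}}\int_{B_D(z,\sqrt2r)}e^{-\|v_y\|^2/(2\sigma^2)}\,dy .
\]

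Next I would pass to tubular coordinates: the normal exponential map $(x,v)\mapsto x+v$ is a diffeomorphism on $\mathcal T(\tau)\supset B_D(z,\sqrt2r)$, with Jacobian $dy=\det A_{x+v}\,dv\,d\mu(x)$ and $v_{x+v}=v$, and again $\det A_{x+v}=1+O(\sigma)$ on the relevant range. Up to a uniform factor $1+O(\sigma)$ this reduces the task to evaluating
\[
I:=\int_{\mathcal M}\int_{T_x^\perp\mathcal M}\mathbf{1}(x+v\in B_D(z,\sqrt2r))\,e^{-\|v\|^2/(2\sigma^2)}\,dv\,d\mu(x).
\]
Only points $x$ with $\|x-\pi(z)\|=O(\sigma)$ contribute, so I would parametrize $x=\exp_{\pi(z)}(\sigma t)$ with $t\in T_{\pi(z)}\mathcal M$ bounded and set $v=\sigma w$. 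Lemma~\ref{lem:volume element expansion} gives $d\mu(x)=\sigma^d(1+O(\sigma^2))\,dt$; Lemma~\ref{lem:exponential map expansion} and the quadratic reach estimate $d(x,T_{\pi(z)}\mathcal M)\le\|x-\pi(z)\|^2/(2\tau)=O(\sigma^2)$ (Lemma~\ref{Lemma:ReachCond}), together with the $O(\sigma)$ tilt between $T_x^\perp\mathcal M$ and $T^\perp_{\pi(z)}\mathcal M$, show that in the rescaled variables $(t,w)$ the integration domain lies within Hausdorff distance $O(\sigma)$ of the fixed region $\{\|t\|^2+\|w-\hat v\|^2<2c_D^2\}$, where $\hat v:=v_z/\sigma$ and $\|\hat v\|\le C$ on $\Gamma$. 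Since $e^{-\|w\|^2/2}$ is bounded with a rectifiable level set, a domain perturbation of size $O(\sigma)$ changes the integral by $O(\sigma)$, so
\[
I=\sigma^D(1+O(\sigma))\iint_{\|t\|^2+\|w-\hat v\|^2<2c_D^2}e^{-\|w\|^2/2}\,dw\,dt=:\sigma^D(1+O(\sigma))\,J(\hat v).
\]

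Combining the two steps and using $(2\pi\sigma^2)^{-(D-d)/2}\sigma^D=(2\pi)^{-(D-d)/2}\sigma^d$ yields
\[
\mathbb{P}\big(Y\in B_D(z,\sqrt2r)\big)=\frac{J(\hat v)}{\operatorname{Vol}(\mathcal M)(2\pi)^{(D-d)/2}}\,\sigma^d(1+O(\sigma))=c\,r^d(1+O(r)),
\]
with $c=c(z)=J(v_z/\sigma)\big/\big(\operatorname{Vol}(\mathcal M)(2\pi)^{(D-d)/2}c_D^d\big)$. Since $J$ is continuous and strictly positive on the compact set $\{\|\hat v\|\le C\}$, the constant $c$ is bounded above and below by positive constants uniformly in $z\in\Gamma$, and all error terms are uniform by compactness of $T(\tau-\varepsilon)$ and the uniformity in Theorem~\ref{th:probability noisy}. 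The main obstacle is precisely the third ingredient: making rigorous that linearizing $\mathcal M$ at $\pi(z)$ and rescaling by $\sigma$ perturb the domain of integration by an amount contributing only $O(\sigma)$ relative error against the bounded Gaussian integrand; the cleanest route is a two-sided sandwich of the true domain between slightly inflated and deflated ``flat'' domains obtained from the quadratic reach bound, after which the two bounding integrals agree to first order.
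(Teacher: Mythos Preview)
Your proposal is correct and follows the same skeleton as the paper: plug in the density expansion from Theorem~\ref{th:probability noisy}, pass to tubular (tangent/normal) coordinates, and exploit $r\asymp\sigma$ so that all curvature and remainder factors are $1+O(\sigma)$. The difference is in how the remaining integral is handled. The paper performs a Fubini-type split, first integrating the Gaussian in the normal fiber to capture a fixed positive fraction of mass (giving only a lower bound, which is all that is used downstream in Corollary~\ref{Col:Local_sample_size}), and then integrating the tangent directions to pick up the $r^d$ volume factor. You instead rescale $(x,v)\mapsto(\sigma t,\sigma w)$ simultaneously and reduce everything to a single dimensionless integral $J(\hat v)$ over a fixed region, obtaining the full two-sided asymptotic. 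Your route is cleaner and actually delivers the equality in the statement, at the cost of the domain-perturbation lemma you flag; the paper's route avoids that step by accepting a one-sided bound and treating the ball's description in $(u,n)$-coordinates more loosely.
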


\begin{cor}
\label{Col:Local_sample_size}
    Let $n$ be the number of observed points $Y$ defined in \eqref{eq:Add_model} that fall in $B_D(z,\sqrt{2}r)$ for $z\in \Gamma$. Assume $r=c_D\sigma$ and $0<\sigma\le\sigma_0$. If the total sample size satisfies $N\ge O(\sigma^{-5-3d})$, then
    $$\bP( n\ge C_1 \sigma^{-5-2d} )\geq 1 - 2\exp({ -C_2\sigma^{-5-2d}})$$ 
    for some constants $C_1$, $C_2>0$.
\end{cor}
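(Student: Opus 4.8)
The plan is to recognize $n$ as a $\mathrm{Binomial}(N,p)$ count and apply a \emph{multiplicative} (relative) Chernoff bound to its lower tail. First, write $p := \bP(Y\in B_D(z,\sqrt2\,r))$. By Lemma~\ref{Lemma:prob_in_a_ball} and $r=c_D\sigma$ we have $p = c\,r^d(1+O(r)) = c\,c_D^d\,\sigma^d\,(1+O(\sigma))$; hence, shrinking $\sigma_0$ if necessary, there is a constant $c_*>0$ with $p\ge c_*\,\sigma^d$ for all $0<\sigma\le\sigma_0$. Since $\{y_i\}_{i=1}^N$ are i.i.d.\ copies of $Y$, the count $n=\sum_{i=1}^N \mathbf 1_{\{y_i\in B_D(z,\sqrt2\,r)\}}$ is $\mathrm{Binomial}(N,p)$ with mean $\mu:=\bE(n)=Np$. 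If $N\ge C_0\,\sigma^{-5-3d}$, then $\mu = Np \ge C_0 c_*\,\sigma^{-5-2d}$.

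Next I would invoke the standard multiplicative lower-tail bound, which follows from Lemma~\ref{lem:Chernoff bound} applied to $e^{-tn}$ with $n=\sum_i X_i$, $X_i$ independent Bernoulli$(p)$: using $\bE(e^{-tn}) = (1-p(1-e^{-t}))^N \le \exp(-Np(1-e^{-t}))$ and optimizing over $t>0$ one gets, for every $\delta\in(0,1)$,
\[
\bP\big(n \le (1-\delta)\mu\big) \;\le\; \exp\!\Big(-Np\big[\delta+(1-\delta)\ln(1-\delta)\big]\Big) \;\le\; \exp\!\Big(-\tfrac{\delta^2\mu}{2}\Big).
\]
Taking $\delta=\tfrac12$ yields $\bP(n\le\tfrac12\mu)\le \exp(-\mu/8)\le \exp\!\big(-\tfrac{C_0 c_*}{8}\,\sigma^{-5-2d}\big)$. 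Setting $C_1:=\tfrac12 C_0 c_*$ and $C_2:=\tfrac{C_0 c_*}{8}$, and noting $\tfrac12\mu\ge C_1\sigma^{-5-2d}$, we conclude
\[
\bP\big(n\ge C_1\sigma^{-5-2d}\big)\;\ge\;\bP\big(n\ge \tfrac12\mu\big)\;\ge\;1-\exp(-C_2\sigma^{-5-2d})\;\ge\;1-2\exp(-C_2\sigma^{-5-2d}),
\]
which is the claimed bound.

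The argument is essentially routine; the one point requiring care is that one must use the multiplicative Chernoff bound rather than Hoeffding's inequality (Lemma~\ref{lem:Hoeffding}). Applying Hoeffding directly to the indicators would only give a deviation bound of order $\exp(-c\,Np^2)=\exp(-c\,\sigma^{-5-d})$, which is weaker than the target rate $\exp(-C_2\sigma^{-5-2d})$ precisely because $p\to 0$; exploiting the smallness of $p$ through the relative Chernoff bound is what recovers the exponent $\sigma^{-5-2d}$. If a version uniform over $z\in\Gamma$ is wanted, one covers $\Gamma$ by a net of polynomial-in-$\sigma^{-1}$ cardinality and takes a union bound, which is harmless since $\log(\text{net size})=O(\log(1/\sigma))$ is negligible compared with $\sigma^{-5-2d}$ (after adjusting $C_2$); the constants in Lemma~\ref{Lemma:prob_in_a_ball} are already uniform in $z$, so no new difficulty arises.
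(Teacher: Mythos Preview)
Your proposal is correct and follows essentially the same approach as the paper: the paper's proof simply observes that $n$ is binomial with parameters $N$ and $p\ge c\sigma^d$ and then invokes the Chernoff bound (Lemma~\ref{lem:Chernoff bound}) without further detail. Your write-up is in fact more careful, making explicit the derivation of the multiplicative lower-tail bound from the generic Chernoff inequality and correctly noting that Hoeffding would only yield the weaker exponent $\sigma^{-5-d}$; the paper leaves these points implicit.
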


\section{Sample-level estimator and output manifold}

For data points on a compact $C^3$ submanifold $\mathcal{M}\subset\mathbb{R}^D$ observed with Gaussian noise $\mathcal Y=\{y_i\}_{i=1}^N$ as defined in~\eqref{eq:Add_model}, this section establishes the asymptotic properties of the sample-level estimator $F$
defined in \eqref{eq:def:F(z)}. 
Consider the tubular neighborhood 
\[\Gamma = \{(x,v)\in T^\bot\mathcal{M} : \|v\|\le C\sigma\}.\]
Assume $\sigma \le \sigma_0:=\frac{\tau-\varepsilon}{C+\sqrt{2}c_D}$ for any fixed $\varepsilon\in(0,\tau)$ and $r=c_D\sigma$, which ensures $B_D(z,\sqrt{2}r)\subset T(\tau-\varepsilon)$. We demonstrate that for a sample size $N \ge O(\sigma^{-3d-5})$ the estimator $F$ effectively denoises points in $\Gamma$, mapping them to a vicinity of the underlying manifold $\mathcal{M}$. Moreover, we can characterize the geometry of the output manifold $F(\Gamma)$.



\subsection{Sample-level asymptotic consistency}
The following theorem establishes a pointwise estimate for the estimator $F(z)$ on $\Gamma$ with sample size $N \ge O(\sigma^{-3d-5})$.
\begin{thm}
\label{thm:main}
If the sample size satisfies $N \ge O(\sigma^{-3d-5})$, then the estimator $F(z)$ for any 
$z\in \Gamma$ as defined in \eqref{eq:def:F(z)} satisfies
\[
F(z) = \pi(z) + \tfrac{d}{2} H_{\pi(z)} \sigma^2 + O(\sigma^3)
\]
with probability at least $1 - C_1 \exp(-C_2 \sigma^{-1})$  
for some constants $ C_1, C_2>0$, and 
here $H_{\pi(z)}$ is the mean curvature at $\pi(z)\in \mathcal{M}$.
\end{thm}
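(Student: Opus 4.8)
\textbf{Proof proposal for Theorem \ref{thm:main}.}
The plan is to combine the population-level expansion from Theorem \ref{thm:Population-level output manifold}, the concentration result of Theorem \ref{thm:Concentrate_with_n}, and the sample-count bound of Corollary \ref{Col:Local_sample_size}, via a triangle-inequality decomposition together with a union bound over a suitable net of $\Gamma$. First I would fix $z \in \Gamma$ and write
\[
F(z) - \pi(z) - \tfrac{d}{2} H_{\pi(z)} \sigma^2
= \bigl(F(z) - \mu_z\bigr) + \bigl(\mu_z - \pi(z) - \tfrac{d}{2} H_{\pi(z)} \sigma^2\bigr).
\]
The second bracket is $O(\sigma^3)$: although Theorem \ref{thm:Population-level output manifold} as stated only records an $O(\sigma^2)$ Hausdorff bound, its displayed expansion combined with the remark that $z \mapsto \mu_z$ is $C^\infty$ and with the sharper score expansion of Theorem \ref{th:score_noisy} (whose remainder, when fed through Theorem \ref{thm: loc average} with $r = c_D\sigma$, contributes at order $\sigma^2 \cdot O(\|v_z\| + \sigma) = O(\sigma^3)$ on $\Gamma$ since $\|v_z\| \le C\sigma$) gives the required $O(\sigma^3)$ deterministic bias. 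For the first bracket, Theorem \ref{thm:Concentrate_with_n} shows $\|F(z) - \mu_z\| \le C_3 \sigma^3$ with probability at least $1 - C_1 \exp(-C_2 \sigma^{-1})$, \emph{provided} the local count $n$ exceeds $O(\sigma^{-2d-5})$; Corollary \ref{Col:Local_sample_size} guarantees exactly this, with its own exponentially small failure probability, once $N \ge O(\sigma^{-3d-5})$. Conditioning on the event in Corollary \ref{Col:Local_sample_size} and then applying Theorem \ref{thm:Concentrate_with_n}, a union bound yields the pointwise conclusion with failure probability $\le C_1 \exp(-C_2 \sigma^{-1})$ after adjusting constants.

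The second step is to upgrade the pointwise statement to a uniform one over $z \in \Gamma$. I would build a $\delta$-net $\{z_k\}$ of the compact set $\Gamma$ with $\delta = \sigma^{4}$ (say), so that the net has cardinality polynomial in $\sigma^{-1}$, and apply the pointwise bound with a union bound over the net; since the number of net points grows only polynomially while each failure probability is $\exp(-C_2\sigma^{-1})$, the union bound still gives failure probability $\le C_1 \exp(-C_2' \sigma^{-c})$ for a slightly smaller constant. To pass from the net to all of $\Gamma$ I need a Lipschitz (or at least modulus-of-continuity) estimate for $z \mapsto F(z) - \pi(z) - \tfrac{d}{2}H_{\pi(z)}\sigma^2$. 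The map $z \mapsto \pi(z)$ is $C^2$ with derivatives bounded on $T(\tau-\varepsilon)$ (stated in the preliminaries), and $z \mapsto H_{\pi(z)}$ is Lipschitz by Lemma \ref{lem:Lip-derivative}; for $F$ itself one differentiates \eqref{eq:def:F(z)} directly — the kernel $\varphi_r$ is smooth with $\|\nabla \varphi_r\| \lesssim r^{-1}\varphi_{r'}$-type bounds, so on the favorable event (where denominators are bounded below, which is part of what Theorem \ref{thm:Concentrate_with_n}'s proof controls) one gets $\|\nabla F(z)\| \le C/r = C/(c_D\sigma)$. Hence the displacement over a $\delta$-ball is at most $C\delta/\sigma = C\sigma^3$, which is absorbed into the $O(\sigma^3)$ error. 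Choosing the net slightly finer if necessary makes the transfer clean.

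The main obstacle I anticipate is the uniform control of the sample estimator's fluctuation and Lipschitz constant \emph{simultaneously over all} $z \in \Gamma$ on a single high-probability event: the denominator $\sum_j \varphi_r(y_j - z)$ must stay bounded below uniformly in $z$ (not just at net points) for both the concentration bound and the derivative bound to hold, and the local sample $\mathcal Y_z$ changes discontinuously as $z$ moves. I would handle this by working with a slightly enlarged kernel support (replacing $\sqrt 2 r$ by $2\sqrt 2 r$, say) so that all $\mathcal Y_{z_k}$ for net points $z_k$ near $z$ contain $\mathcal Y_z$, then noting that $\mathbb E[\sum_j \varphi_r(y_j-z)] = N \cdot \mathbb P(\cdot)\text{-weighted mass} \gtrsim N \sigma^d$ by Lemma \ref{Lemma:prob_in_a_ball} and its proof, and invoking Hoeffding (Lemma \ref{lem:Hoeffding}) once more uniformly over the net to keep this sum within a constant factor of its mean. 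A secondary technical point is bookkeeping the three separate exponentially-small failure probabilities (from Corollary \ref{Col:Local_sample_size}, from Theorem \ref{thm:Concentrate_with_n}, and from the net union bound) and the three polynomial-in-$\sigma^{-1}$ net-size factors so that the final bound retains the clean form $1 - C_1 \exp(-C_2\sigma^{-c})$; this is routine but must be done carefully to confirm the exponent $-3d-5$ in $N$ is the binding constraint.
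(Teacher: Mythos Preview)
Your first paragraph is correct and is exactly the paper's proof: the paper decomposes $F(z)-\pi(z)-\tfrac{d}{2}H_{\pi(z)}\sigma^2$ into $(F(z)-\mu_z)+(\mu_z-\pi(z)-\tfrac{d}{2}H_{\pi(z)}\sigma^2)$, bounds the second bracket by $O(\sigma^3)$ via the score expansion (using $\|v_z\|\le C\sigma$ on $\Gamma$ so that $\sigma^2\cdot O(\|v_z\|+\sigma\|v_z\|+\sigma^2)=O(\sigma^3)$), and bounds the first bracket by combining Corollary~\ref{Col:Local_sample_size} with Theorem~\ref{thm:Concentrate_with_n} and a union bound.

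Your second paragraph, however, is not needed: Theorem~\ref{thm:main} is a \emph{pointwise} statement (``for any $z\in\Gamma$'' with a fixed failure probability), not a uniform one. The paper stops after the triangle-inequality argument; the upgrade to uniformity is carried out separately, first over the observed sample in Corollary~\ref{cor:uniform estimate} (by a union bound over $n\le N$ points), and then over all of $\Gamma$ in Theorem~\ref{thm:sample-level output manifold} (by a $\rho=c_0\sigma^3$ net of cardinality $O(\sigma^{-3D})$ together with Lipschitz continuity of $F$, $\pi$, and $H_{\pi(\cdot)}$). Your net-and-Lipschitz sketch is essentially what the paper does there, so the extra work is not wasted, just misplaced.
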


\begin{rem}
    The above theorem indicates that the estimation accuracy is intrinsically governed by the geometric structure of the manifold.  
In particular, if the mean curvature $H_{\pi(z)}$ at $\pi(z)\in \mathcal{M}$ vanishes, the estimation error is reduced to $O(\sigma^3)$.
\end{rem}

\begin{cor}\label{cor:uniform estimate}
Under the assumptions of Theorem~\ref{thm:main}, let $\mathcal Y^o= \{y_i\}_{i=1}^n := \mathcal Y\cap\Gamma$ be all observed data points $\mathcal Y$ in $\Gamma$. Then the estimator $F(z)$ defined in \eqref{eq:def:F(z)} satisfies
\[
F(z) = \pi(z) + \tfrac{d}{2} H_{\pi(z)} \sigma^2 + O(\sigma^3)\qquad \text{uniformly for all } z\in \mathcal Y^o,
\]
with probability at least $1 - C_1' \exp(-C_2' \sigma^{-c'})$  
for some constants $c', C_1', C_2'>0$.
\end{cor}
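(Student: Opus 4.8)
The plan is to upgrade the pointwise high-probability estimate of Theorem~\ref{thm:main} to a uniform statement over the random finite set $\mathcal Y^o = \mathcal Y\cap\Gamma$. Since $|\mathcal Y^o|\le N = O(\sigma^{-3d-5})$ is polynomial in $\sigma^{-1}$ while the failure probability in Theorem~\ref{thm:main} is $C_1\exp(-C_2\sigma^{-1})$, a direct union bound over the at most $N$ points of $\mathcal Y^o$ will suffice, at the cost of only a subexponential factor. Concretely, I would first condition on the total sample $\mathcal Y$ (equivalently, on $N$ and the realized points), so that $\mathcal Y^o$ is a fixed finite set; then for each fixed $z\in\mathcal Y^o$ apply Theorem~\ref{thm:main} to obtain $\|F(z)-\pi(z)-\tfrac d2 H_{\pi(z)}\sigma^2\| = O(\sigma^3)$ off an event of probability at most $C_1\exp(-C_2\sigma^{-1})$. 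Taking the union over the (at most $N$) points gives a total failure probability bounded by $N\cdot C_1\exp(-C_2\sigma^{-1}) = O(\sigma^{-3d-5})\,C_1\exp(-C_2\sigma^{-1})$, which is still of the form $C_1'\exp(-C_2'\sigma^{-c'})$ for suitable constants (e.g. $c'=1$ and any $C_2'<C_2$, absorbing the polynomial factor for $\sigma$ small).

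A subtlety to address is the dependence structure: whether $F(z)$ for $z\in\mathcal Y^o$ depends on the same samples $y_j$ used to \emph{define} the point $z$, creating self-interaction. I would handle this exactly as in the proof of Theorem~\ref{thm:main}: the estimate there is derived via Lemma~\ref{lem:nonasymp_hoeffding} applied to the points $\mathcal Y_z$ falling in $B_D(z,\sqrt2 r)$, and the concentration bound holds for any deterministic center $z\in\Gamma$ with $B_D(z,\sqrt2 r)\subset T(\tau-\varepsilon)$; so after conditioning on $\mathcal Y$ (hence fixing the centers), one re-runs the argument treating each realized $z$ as deterministic. One may either invoke the conditional version of the concentration inequalities (the randomness used is only in which samples land near $z$ and their positions, which is exactly the setting of Theorem~\ref{thm:Concentrate_with_n}), or, more cleanly, prove a uniform-over-$\Gamma$ version first via an $\varepsilon$-net of $\Gamma$ of size $O(\sigma^{-D})$ together with the Lipschitz continuity of $z\mapsto F(z)$ and $z\mapsto \pi(z)$ (the latter from the $C^2$-regularity of $\pi$ on $T(\tau)$), and then simply restrict to $\mathcal Y^o\subset\Gamma$. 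The net argument also requires a deterministic Lipschitz bound on $F$, which follows from differentiating \eqref{eq:def:F(z)} using the smoothness and compact support of $\varphi_r$ together with the lower bound on the denominator furnished by Lemma~\ref{Lemma:prob_in_a_ball} and Corollary~\ref{Col:Local_sample_size}.

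The main obstacle I anticipate is not the union bound itself but ensuring the \emph{constants} remain uniform: the $O(\sigma^3)$ in Theorem~\ref{thm:main} must have an implied constant independent of $z\in\Gamma$ (this is already claimed, via the uniform remainder bounds in Corollary~\ref{co:log density} and Theorem~\ref{th:score_noisy} and the uniform inclusion $B_D(z,\sqrt2 r)\subset T(\tau-\varepsilon)$), and the net-plus-Lipschitz route requires that the Lipschitz constant of $F$ not blow up faster than polynomially in $\sigma^{-1}$ — which it does not, since each derivative of $\varphi_r$ contributes a factor $O(\sigma^{-1})$ and the normalized denominator is bounded below by $c\,\sigma^d$ on the high-probability event of Corollary~\ref{Col:Local_sample_size}. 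Once these uniformities are in hand, choosing the net scale $\delta = \sigma^{K}$ for $K$ large enough makes the discretization error $o(\sigma^3)$ and the net cardinality $O(\sigma^{-KD})$, still absorbed by $\exp(-C_2\sigma^{-1})$; this yields the stated bound with $c'=1$.
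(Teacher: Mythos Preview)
Your proposal is correct and follows the paper's argument exactly: apply Theorem~\ref{thm:main} pointwise and take a union bound over the at most $N=O(\sigma^{-3d-5})$ points of $\mathcal Y^o$, then absorb the polynomial prefactor into the exponential. Your extra care about the dependence between the random centers $z\in\mathcal Y^o$ and the sample $\mathcal Y$ defining $F$ goes beyond the paper's (terse) proof, and your alternative $\varepsilon$-net route is precisely what the paper uses later for Theorem~\ref{thm:sample-level output manifold}.
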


\begin{cor}
Under the assumptions of Theorem~\ref{thm:main},
we obtain that
\[
\lim_{\sigma\to0^{+}} \frac{F(\pi(z)) - \pi(z)}{\sigma^2} =\tfrac{d}{2} H_{\pi(z)}.
\]
\end{cor}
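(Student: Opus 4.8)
The plan is to obtain the limit as an immediate consequence of Theorem~\ref{thm:main} applied at the base point $z=\pi(z)$, i.e.\ at a point already lying on $\mathcal{M}$. First I would observe that if $z_0\in\mathcal{M}$ then $\pi(z_0)=z_0$ and the normal displacement vanishes, $v_{z_0}=0$, so in particular $z_0\in\Gamma$ for every $\sigma>0$ (the tubular neighborhood always contains $\mathcal{M}$ itself). Thus Theorem~\ref{thm:main} is applicable with $z=\pi(z)$, and it yields, on an event of probability at least $1-C_1\exp(-C_2\sigma^{-1})$,
\[
F(\pi(z)) = \pi(z) + \tfrac{d}{2}H_{\pi(z)}\sigma^2 + O(\sigma^3).
\]
Rearranging gives $F(\pi(z))-\pi(z) = \tfrac{d}{2}H_{\pi(z)}\sigma^2 + O(\sigma^3)$, and dividing by $\sigma^2$ produces $\frac{F(\pi(z))-\pi(z)}{\sigma^2} = \tfrac{d}{2}H_{\pi(z)} + O(\sigma)$.

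The only genuine subtlety is that the identity in Theorem~\ref{thm:main} holds on a high-probability event that depends on $\sigma$ (through the sample size $N=N(\sigma)\ge O(\sigma^{-3d-5})$), so the left-hand side is not a deterministic function of $\sigma$ and one must say in what sense the limit is taken. I would address this by noting that the probabilities $1-C_1\exp(-C_2\sigma^{-1})$ are summable along any sequence $\sigma_k\to 0^+$ with $\sigma_k$ decaying at least polynomially (or even just along $\sigma_k=1/k$), so by Borel--Cantelli the expansion $F(\pi(z))=\pi(z)+\tfrac d2 H_{\pi(z)}\sigma_k^2+O(\sigma_k^3)$ holds for all sufficiently large $k$ almost surely; hence the stated limit holds almost surely along such sequences. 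Alternatively, and more simply, the statement can be read as a statement about the population-level quantity or as convergence in probability, either of which follows directly from Theorem~\ref{thm:main} (or from Theorem~\ref{thm:Population-level output manifold} at $z=\pi(z)$, where $\mu_{\pi(z)}=\pi(z)+\tfrac d2 H_{\pi(z)}\sigma^2+O(\sigma^2)$, combined with the sample-to-population bound of Theorem~\ref{thm:Concentrate_with_n}).

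I expect no real obstacle here: the corollary is essentially a restatement of Theorem~\ref{thm:main} specialized to $v_z=0$ and divided by $\sigma^2$. The main point worth making carefully in the write-up is the interpretation of the limit (almost surely along a sequence, or in probability, under the sample-size scaling $N\ge O(\sigma^{-3d-5})$), and that the $O(\sigma^3)$ error divided by $\sigma^2$ is $O(\sigma)\to 0$. If one wishes a cleaner deterministic statement, I would phrase it as: for any sequence $\sigma_k\to0^+$ with the associated sample sizes $N_k\ge O(\sigma_k^{-3d-5})$, one has $\frac{F(\pi(z))-\pi(z)}{\sigma_k^2}\to\tfrac d2 H_{\pi(z)}$ with probability tending to $1$, and almost surely if $\sum_k \exp(-C_2\sigma_k^{-1})<\infty$.
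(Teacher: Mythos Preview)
Your proposal is correct and matches the paper's approach: the paper treats this corollary as an immediate consequence of Theorem~\ref{thm:main} applied at $z=\pi(z)\in\mathcal{M}\subset\Gamma$, and does not supply a separate proof. Your additional care about the probabilistic interpretation of the limit (via Borel--Cantelli or convergence in probability) is more than the paper itself provides, but is a reasonable and correct elaboration.
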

\begin{rem}
    (1) The operator $F-id$ on the left-hand side can be regarded as a normalized graph Laplacian constructed with truncated kernel weights of bandwidth $r=c_D\sigma$. The limiting behavior indicates that the normalized graph Laplacian, when applied to the coordinate functions, converges to the mean curvature vector field on the manifold. This result overlaps with the theory of diffusion maps \cite{COIFMAN20065}.

    (2) The estimator $F$ on $\mathcal{M}$ acts as a discrete  mean curvature flow for a time scale of $\sigma^2$, which inherently enhances the geometric and topological structure of the data.
   
\end{rem}

\subsection{Output manifold}
We now show that small perturbations of a smooth manifold preserve its geometric regularity, in the sense that the reach of the perturbed manifold remains close to that of the original manifold.

\begin{lem}\label{lem:reach of perturbed manifold}
Let $\mathcal{M} \subset \mathbb{R}^n$ be a compact $C^3$ $d$-dimensional submanifold with reach $\tau > 0$, and $f: \mathcal{M} \to \mathbb{R}^n$ be a $C^1$ normal vector field
\[
f(x) = \frac{d}{2} H_x \sigma^2 +O(\sigma^3),
\]
where $H_x$ is the mean curvature vector of $\mathcal{M}$ at $x$.
Then for the perturbed manifold
\[
\widehat{\mathcal{M}} = \{ x + f(x) \mid x \in \mathcal{M} \},
\]
there exists a constant $c > 0$ depending only on $\mathcal{M}$ such that for sufficiently small $\sigma$,
\[
\text{reach}(\widehat{\mathcal{M}}) \geq \tau(1 - c\sigma^2)>0.
\]
\end{lem}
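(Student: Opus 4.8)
The plan is to verify Federer's reach condition (Lemma~\ref{Lemma:ReachCond}) for $\widehat{\mathcal{M}}$ by comparing it with $\mathcal{M}$ through the map $\Phi\colon\mathcal{M}\to\mathbb{R}^D$, $\Phi(x)=x+f(x)$. First I would record the quantitative regularity of $f$ that the argument needs: besides $\|f\|_{C^0}=O(\sigma^2)$, one uses $\|\nabla f\|_{\text{op}}=O(\sigma^2)$ and that $\nabla f$ is Lipschitz with constant $O(\sigma^2)$ (equivalently $\|\nabla^2 f\|_{\text{op}}=O(\sigma^2)$). This is where the argument genuinely uses slightly more than the bare $C^1$ hypothesis; the extra control is available because $f=F|_{\mathcal{M}}-\mathrm{id}$, with $F$ being $C^\infty$ on a neighbourhood of $\Gamma\supset\mathcal{M}$ (the kernel denominator in \eqref{eq:def:F(z)} is bounded below there on the relevant high-probability event), its leading correction $\tfrac d2 H\sigma^2$ involving the bounded $C^1$ field $H$, so the first two differentials of $f$ inherit the $O(\sigma^2)$ order. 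With these bounds, $\Phi$ is for small $\sigma$ a $C^2$ diffeomorphism onto $\widehat{\mathcal{M}}$ (a routine check: $d\Phi_x$ equals the inclusion $T_x\mathcal{M}\hookrightarrow\mathbb{R}^D$ plus $df_x$, hence is uniformly invertible onto its image, and $\|f(x)-f(y)\|=O(\sigma^2)$ gives global injectivity once $\sigma$ is small). Hence $\widehat{\mathcal{M}}$ is a compact $C^2$ submanifold, and from $T_{\Phi(x)}\widehat{\mathcal{M}}=d\Phi_x(T_x\mathcal{M})$ one gets, for the orthogonal projections $P_\bullet$ onto a tangent space and $Q_\bullet=I-P_\bullet$ onto its complement, the uniform comparison $\|Q_{\Phi(x)}-Q_x\|_{\text{op}}=O(\sigma^2)$.

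By Federer's characterization (applicable to the $C^2$ manifold $\widehat{\mathcal{M}}$) it then suffices to bound
\[
\sup_{\widehat a\neq\widehat b\in\widehat{\mathcal{M}}}\ \frac{2\,d\big(\widehat b,\ \widehat a+T_{\widehat a}\widehat{\mathcal{M}}\big)}{\|\widehat a-\widehat b\|^2}\ \le\ \frac1\tau\bigl(1+c'\sigma^2\bigr),
\]
which yields $\operatorname{reach}(\widehat{\mathcal{M}})\ge\tau(1+c'\sigma^2)^{-1}\ge\tau(1-c\sigma^2)>0$ for $\sigma$ small. Write $\widehat a=\Phi(a)$, $\widehat b=\Phi(b)$ and note $d(\widehat b,\widehat a+T_{\widehat a}\widehat{\mathcal{M}})=\|Q_{\widehat a}(\widehat b-\widehat a)\|$, where $Q_{\widehat a}$ is the projection onto the normal space of $\widehat{\mathcal{M}}$ at $\widehat a$. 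Fix once and for all a small threshold $\delta=\delta(\mathcal{M})>0$, below the injectivity radius $\mathrm{inj}_{\mathcal{M}}\ge\pi\tau$ (Lemma~\ref{lem:injectivity radius}). In the \emph{far} regime $\|a-b\|\ge\delta$, crude bounds suffice: $\|\widehat a-\widehat b\|=\|a-b\|\bigl(1+O(\sigma^2)\bigr)$ and
\[
\|Q_{\widehat a}(\widehat b-\widehat a)\|\ \le\ \|Q_a(b-a)\|+\|Q_{\widehat a}-Q_a\|_{\text{op}}\|b-a\|+\|f(b)-f(a)\|\ \le\ \frac{\|a-b\|^2}{2\tau}+O(\sigma^2),
\]
using Lemma~\ref{Lemma:ReachCond} for $\mathcal{M}$ (so $\|Q_a(b-a)\|=d(b,T_a\mathcal{M})\le\|a-b\|^2/2\tau$) and $\|f\|_{C^1}=O(\sigma^2)$; dividing and using $\|a-b\|\ge\delta$ bounds the ratio by $\tfrac1\tau(1+O(\sigma^2))$ with a $\delta$-dependent (hence $\mathcal{M}$-dependent) constant.

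The crux is the \emph{near} regime $\|a-b\|<\delta$, where the term $O(\sigma^2)\|a-b\|$ above, divided by $\|a-b\|^2$, would blow up as $b\to a$; here the tangential information must be subtracted exactly before projecting. Setting $u=\exp_a^{-1}(b)\in T_a\mathcal{M}$ (so $\|u\|=d_{\mathcal{M}}(a,b)\le C\|a-b\|$), Lemma~\ref{lem:exponential map expansion} gives $u=(b-a)^\top+O(\|a-b\|^3)$ and $\|(b-a)^\perp\|=d(b,T_a\mathcal{M})$, while a second-order Taylor expansion of $f\circ\exp_a$ — the step that uses $\|\nabla^2 f\|_{\text{op}}=O(\sigma^2)$ — gives $f(b)-f(a)=df_a(u)+O(\sigma^2\|a-b\|^2)=df_a\bigl((b-a)^\top\bigr)+O(\sigma^2\|a-b\|^2)$. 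Hence
\[
\widehat b-\widehat a\ =\ d\Phi_a\bigl((b-a)^\top\bigr)\ +\ (b-a)^\perp\ +\ O(\sigma^2\|a-b\|^2),
\]
where $(\cdot)^\top,(\cdot)^\perp$ denote the $T_a\mathcal{M}$- and $T_a^\perp\mathcal{M}$-components. Since $d\Phi_a\bigl((b-a)^\top\bigr)\in T_{\widehat a}\widehat{\mathcal{M}}$ it is annihilated by $Q_{\widehat a}$, so
\[
\|Q_{\widehat a}(\widehat b-\widehat a)\|\ \le\ \|(b-a)^\perp\|+O(\sigma^2\|a-b\|^2)\ =\ d(b,T_a\mathcal{M})+O(\sigma^2\|a-b\|^2)\ \le\ \frac{\|a-b\|^2}{2\tau}\bigl(1+O(\sigma^2)\bigr)
\]
by Lemma~\ref{Lemma:ReachCond} for $\mathcal{M}$; combined with $\|\widehat a-\widehat b\|^2\ge\|a-b\|^2\bigl(1-O(\sigma^2)\bigr)$ (from $\|f(b)-f(a)\|\le O(\sigma^2)\|a-b\|$), dividing closes the near regime, and the two regimes together give the displayed supremum bound.

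I expect the near regime to be the real obstacle. A purely $C^1$-small normal perturbation can destroy positive reach (an $O(\sigma^2)$-amplitude, $O(\sigma)$-wavelength ripple has $C^0$-norm $O(\sigma^2)$ yet curvature of order $\sigma^{-1}$), so the estimate truly rests on the second-order control $\|\nabla^2 f\|_{\text{op}}=O(\sigma^2)$ together with the cancellation obtained by splitting off the $T_{\widehat a}\widehat{\mathcal{M}}$-tangential part of $\widehat b-\widehat a$; note that only this norm bound on $f$, not the explicit mean-curvature form, is actually used. A secondary point requiring care is the uniform-in-$x$ comparison $\|Q_{\Phi(x)}-Q_x\|_{\text{op}}=O(\sigma^2)$ of the normal projections, which feeds into the far regime.
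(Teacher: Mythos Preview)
Your approach is correct and matches the paper's: both verify Federer's criterion (Lemma~\ref{Lemma:ReachCond}) for $\widehat{\mathcal{M}}$ by subtracting the tangential component $d\Phi_a\bigl((b-a)^\top\bigr)\in T_{\widehat a}\widehat{\mathcal{M}}$ from $\widehat b-\widehat a$ and then bounding the normal remainder via Federer's inequality for $\mathcal{M}$ together with the smallness of $f$ and its derivatives. Your write-up is in fact more rigorous than the paper's on precisely the two points you flag: the paper writes the Taylor remainder of $f$ as $O(\|\Delta\|^2)$ ``for $f\in C^1$'', which silently uses the $\|\nabla^2 f\|_{\mathrm{op}}=O(\sigma^2)$ bound you take care to justify; and the paper treats all scales at once, invoking an inequality $\|\Delta\|^2\le C\|\Delta^\perp\|$ to absorb that remainder, which is false in general (take $a,b$ on any flat portion of $\mathcal{M}$)---your near/far split is the clean way around this, since in the near regime the $O(\sigma^2\|a-b\|^2)$ term divides directly by $\|a-b\|^2$, while in the far regime the crude $O(\sigma^2)$ terms are controlled by $\|a-b\|\ge\delta$.
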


Next, we can demonstrate that the estimator $F$ constructed based on observation points can output an estimated manifold $F(\Gamma)$ that satisfies regularity and is sufficiently close to the original manifold. 

\begin{thm}
\label{thm:sample-level output manifold}
Let $F:\Gamma \to \mathbb{R}^D$ be the estimator as defined in \eqref{eq:def:F(z)}.  
If the sample size satisfies
$N \ge O(\sigma^{-3d-5})$,
then  
$F(\Gamma)$ is a smooth $d$-dimensional manifold with positive reach and satisfies
\[
\operatorname{dist}_H(F(\Gamma),\mathcal{M}) \le C'\sigma^2
\]
with probability at least $1 - C'_1 \exp(-C'_2 \sigma^{-c'})$ for constants  $C',C'_1,C'_2,c'>0$. 
\end{thm}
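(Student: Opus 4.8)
The plan is to combine the three ingredients already established: the pointwise/uniform sample-level expansion (Corollary~\ref{cor:uniform estimate} and Theorem~\ref{thm:main}), the population-level geometry (Theorem~\ref{thm:Population-level output manifold}), and the perturbation stability of the reach (Lemma~\ref{lem:reach of perturbed manifold}). First I would fix $\varepsilon\in(0,\tau)$ and $\sigma\le\sigma_0$ so that $\Gamma\subset T(\tau-\varepsilon)$, so all uniform estimates apply. The global structure of the proof is: (i) show $F$ factors through the base-point projection $\pi$ up to a controlled error, so that $F(\Gamma)$ is a graph over $\mathcal{M}$ of a normal vector field; (ii) apply Lemma~\ref{lem:reach of perturbed manifold} to that vector field to get the reach bound; (iii) read off the Hausdorff distance from the size of the normal displacement.

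For step (i), the key observation is that by Theorem~\ref{thm:main}, for every $z\in\Gamma$ we have $F(z)=\pi(z)+\tfrac{d}{2}H_{\pi(z)}\sigma^2+O(\sigma^3)$ with high probability, and the $O(\sigma^3)$ is uniform over $z$ by the union-bound argument behind Corollary~\ref{cor:uniform estimate} (one needs a covering-net argument: cover $\Gamma$ by $O(\sigma^{-cD})$ balls of radius $\sigma^4$, control $F$ at net points by the pointwise bound, and use that $F$ is Lipschitz on $\Gamma$ with a constant independent of $\sigma$ — this Lipschitz bound follows because the kernel $\varphi_r$ has derivatives bounded by $C r^{-D-1}$ and the normalization $\bE\varphi_r(Y-z)$ is bounded below by $cr^d$ via Lemma~\ref{Lemma:prob_in_a_ball}, so the stochastic fluctuations are likewise uniformly controlled on the net). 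Thus, on the high-probability event, $F$ restricted to $\mathcal{M}$ itself (i.e.\ $z=x\in\mathcal{M}$, $\pi(x)=x$) defines a map $f:\mathcal{M}\to\mathbb{R}^D$, $f(x):=F(x)-x=\tfrac{d}{2}H_x\sigma^2+O(\sigma^3)$, which is a normal vector field up to the $O(\sigma^3)$ error. Here one must check that $f$ is $C^1$: this follows from the smoothness of $F$ inherited from $\varphi_r$ (as noted in the remark after Theorem~\ref{thm:Population-level output manifold}), together with uniform control of $\nabla F - \nabla\mu$ on the net, again via concentration for the derivative kernel $\nabla\varphi_r$.

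For step (ii), I would invoke Lemma~\ref{lem:reach of perturbed manifold} directly with $n=D$ and this $f$: it yields $\mathrm{reach}(\widehat{\mathcal{M}})=\mathrm{reach}(\{x+f(x):x\in\mathcal{M}\})\ge\tau(1-c\sigma^2)>0$ for $\sigma$ small, hence $\ge c'\tau$ for any fixed $c'<1$ and $\sigma$ small enough. It then remains to identify $\widehat{\mathcal{M}}=\{x+f(x):x\in\mathcal{M}\}$ with $F(\Gamma)$. The inclusion $\widehat{\mathcal{M}}\subseteq F(\Gamma)$ is immediate since $\mathcal{M}\subset\Gamma$. For the reverse, given any $z\in\Gamma$, Theorem~\ref{thm:main} gives $F(z)=\pi(z)+f(\pi(z))+O(\sigma^3)=F(\pi(z))+O(\sigma^3)$; since $\pi(z)\in\mathcal{M}$, $F(z)$ lies within $O(\sigma^3)$ of $\widehat{\mathcal{M}}$. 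Strictly, $F(\Gamma)$ need not equal $\widehat{\mathcal{M}}$, so I would instead argue that $F(\Gamma)$ is itself a $C^1$ graph over $\mathcal{M}$: the composition $x\mapsto$ (the fiber $\{F(z):\pi(z)=x\}$) has diameter $O(\sigma^3)$, and since $F$ is $C^1$ with $\nabla F$ close to $\nabla\mu$ which is close to $\nabla\pi$ (itself a submersion onto $T\mathcal{M}$ with the normal directions in its kernel to leading order), $F$ maps $\Gamma$ onto a tube of thickness $O(\sigma^3)$ around $\widehat{\mathcal{M}}$; this tube is a smooth $d$-manifold provided $\sigma^3\ll\mathrm{reach}(\widehat{\mathcal{M}})\asymp\tau$, which holds for small $\sigma$. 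Equivalently — and more cleanly — one restricts $F$ to the zero section $\mathcal{M}$ to get the claimed $d$-manifold $\widehat{\mathcal{M}}=F(\mathcal{M})$ with $\mathrm{reach}\ge c'\tau$, and notes that this is the natural ``output manifold'' of the procedure; I would state the theorem in this form.

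For step (iii), the Hausdorff distance: for the $\sup_{y\in F(\Gamma)}d(y,\mathcal{M})$ direction, each $F(z)=\pi(z)+\tfrac{d}{2}H_{\pi(z)}\sigma^2+O(\sigma^3)$ has $d(F(z),\mathcal{M})\le\|F(z)-\pi(z)\|\le\tfrac{d}{2}\|H_{\pi(z)}\|\sigma^2+O(\sigma^3)\le C\sigma^2$ since $\|H_x\|\le\tau^{-1}$ is uniformly bounded by Lemma~\ref{lem:second fundamental form bound}. For the reverse direction, $\sup_{x\in\mathcal{M}}d(x,F(\Gamma))\le\sup_{x\in\mathcal{M}}\|x-F(x)\|=\sup_x\|f(x)\|\le C\sigma^2$. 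Hence $\operatorname{dist}_H(F(\Gamma),\mathcal{M})\le C'\sigma^2$. Collecting the probability: the only failure event is that of Corollary~\ref{cor:uniform estimate}, with probability at most $C_1'\exp(-C_2'\sigma^{-c'})$, which is exactly the bound claimed. The main obstacle I anticipate is step (i) — making the passage from the \emph{pointwise} high-probability expansion to a \emph{uniform}, \emph{$C^1$} statement rigorous, i.e.\ proving the uniform $C^1$-closeness $\|F-\mu\|_{C^1(\Gamma)}=O(\sigma^3)$ with the stated exponential probability; this needs a careful net argument with a matching Lipschitz bound for $\nabla F$, using concentration (Lemma~\ref{lem:nonasymp_hoeffding}) applied to the weight function $\nabla\varphi_r$ and the lower bound on the local mass from Lemma~\ref{Lemma:prob_in_a_ball} and Corollary~\ref{Col:Local_sample_size}.
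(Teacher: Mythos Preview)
Your approach is essentially the same as the paper's: a covering-net argument over $\Gamma$ combined with the Lipschitz continuity of $F$, $\pi$, and $H_{\pi(\cdot)}$ to promote the pointwise expansion of Theorem~\ref{thm:main} to a uniform one, followed by Lemma~\ref{lem:reach of perturbed manifold} for the reach and the direct two-sided Hausdorff bound from $\|F(z)-\pi(z)\|\le C\sigma^2$. You are in fact more careful than the paper on two points it glosses over---the $d$-dimensionality of $F(\Gamma)$ (the paper simply asserts it after the uniform expansion, whereas you correctly note that $F(\Gamma)$ need not coincide with $\widehat{\mathcal M}=F(\mathcal M)$ and propose restricting to the zero section) and the $C^1$ regularity of the random perturbation $f$ required by Lemma~\ref{lem:reach of perturbed manifold}.
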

The proof uses a standard finite-covering argument over \(\Gamma\) to state the geometric properties of estimated manifold $F(\Gamma)$ with high probabilities.

\section{Numerical experiments}
This section evaluates the performance of the discrete estimator \( F \) for manifold fitting on simulated data, following an experimental setup similar to \cite{mohammed2017manifold,yao2023manifold}. Data generation and the evaluation of the reconstructed manifold proceed as follows.

First, an i.i.d. sample $\{x_i\}_{i=1}^{N}$ is generated uniformly from the $d$-dimensional submanifold $\mathcal{M}\subset \mathbb{R}^D$ with respect to the induced volume measure on $\mathcal{M}$. Independent Gaussian noise is then added to produce the observed data $\{y_i\}_{i=1}^N \subset \mathbb{R}^D$ as
 \[y_i = x_i + \xi_i, \qquad \xi_i \sim \mathcal{N}(0,\sigma^2 I_D).\] 
We then independently generate a set of test points $\{z_i\}_{i=1}^{N_0}$ within a fixed tubular neighborhood of $\mathcal{M}$, such that $\sigma/2 \leq d(z_i, \mathcal{M}) \leq 2\sigma$ (these points could include the observed sample points and are used for evaluating the algorithm).
The estimation of these test points by the estimator \( F \) is performed via the following Algorithm 1.
\begin{algorithm}[ht]\label{algorithm:1}
{\color{black}
\caption{yl25: Manifold fitting estimator.}
\raggedright \textbf{Input:} Test points $\{z_i\}_{i=1}^{N_0}$, observed samples $\{y_i\}_{i=1}^{N}$, ambient dimension $D$, noise bandwidth $\sigma$.\\
\raggedright \textbf{Output:} Estimated points $\{F(z_i)\}_{i=1}^{N_0}$ of manifold $\mathcal{M}$.\\
\begin{itemize}
    \item[1.] Calculate the truncation radius $r=\frac{D\gamma(D/2, 1)}{2\gamma(D/2 + 1, 1)}\sigma$.
    \item[2.] For each $z\in \{z_i\}_{i=1}^{N_0}$:
    \begin{itemize}
    \item Determine the index set $I_z = \bigl\{j : d(z, y_j) \leq \sqrt{2}\,r \bigr\}$.
    \item Compute the Gaussian weights $w_j=e^{\frac{-\|y_j-z\|^2}{2r^2}}$ for all $j \in I_z$.
    \item Obtain the estimated point $F(z)=\frac{\sum_{j\in I_z}^{}w_jy_j}{\sum_{j\in I_z}^{}w_j}$.
\end{itemize}
\end{itemize}
}
\end{algorithm}

Algorithm 1 requires the noise bandwidth \(\sigma\) as an input. In Sections 6.1–6.3, we use the ground-truth bandwidth from the model in order to verify the asymptotic behaviors predicted by Theorem \ref{thm:main}. In practice, \(\sigma\) can be estimated from the noisy observations \cite{li2023point,liu2006noise}.
The computational complexity of Algorithm 1 is \(O(N_0 N D)\) in time and \(O((N_0 + N) D)\) in space.
Performance evaluation is based on the supremum approximation error, defined as $\max_{j}d(F(z_j), \mathcal{M}),$ to estimate the Hausdorff distance to the true manifold $\mathcal{M}$, as well as the computational CPU time.
\subsection{Asymptotic analysis.}
In the following we employ the Hausdorff distance as an indicator to verify the asymptotic relationship established in Theorem \ref{thm:main} with respect to the noise bandwidth $\sigma$, sample size $N$, and mean curvature $H$.

\begin{figure}[ht]
    \centering   
    \includegraphics[width = 1\textwidth]{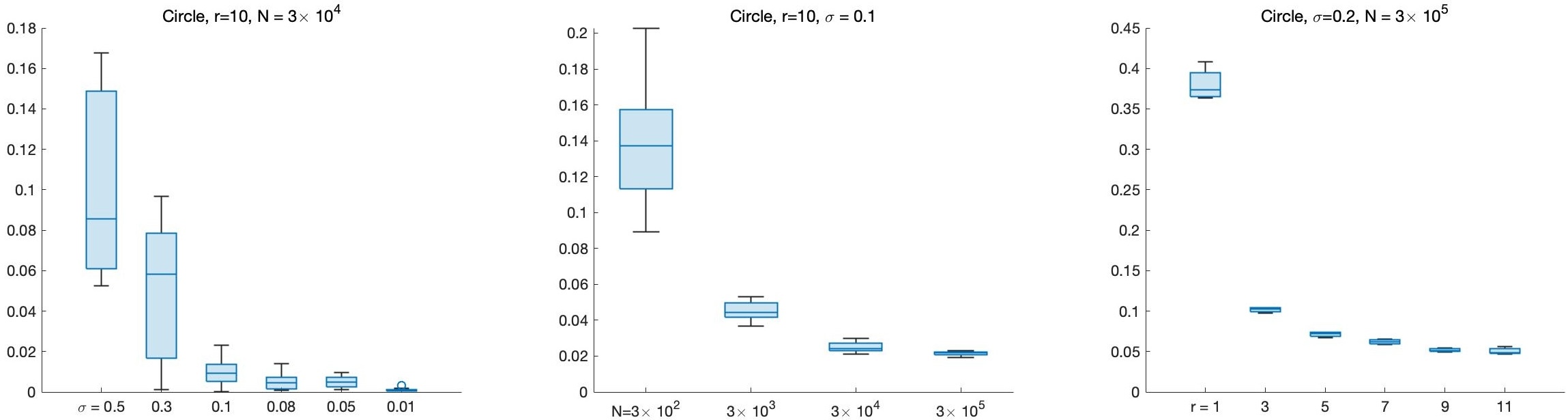}
    \caption{The asymptotic performance of yl25 with respect to the noise bandwidth $\sigma$, sample size $N$, and mean curvature $H$.}\label{Fig:asym_circle}
\end{figure}

For the case of a circle with radius $r$, we fixed \(N = 3 \times 10^4\), \(N_0 = 100\) and \(r = 10\), while varying the noise level \(\sigma \in \{0.5, 0.3, 0.1, 0.08, 0.05, 0.01\}\). For each \(\sigma\), we randomly generated 50 different sets \(\{z_i\}_{i=1}^{N_0}\) and applied the algorithm yl25 to each. The Hausdorff distances between the reconstructed manifold and the underlying manifold are displayed on the left panel of Figure \ref{Fig:asym_circle}. The results indicate that the Hausdorff distance decays quadratically as \(\sigma\) decreases, consistent with the asymptotic behavior stated in Theorem \ref{thm:main}. 
 Next, fixing \(\sigma = 0.1\) and \(r = 10\), we evaluated the performance of yl25 in different sample sizes \(N \in \{3 \times 10^2, 3 \times 10^3, 3 \times 10^4, 3 \times 10^5\}\). The corresponding Hausdorff distances are shown in the middle panel of Figure \ref{Fig:asym_circle}. As \(N\) increases, a pronounced reduction in the Hausdorff distance is observed, which can be attributed to more accurate estimations of the local geometric structure.
 Finally, setting \(N = 3 \times 10^5\) and \(\sigma = 0.2\), we examined how the Hausdorff distance varies with the mean curvature \(\kappa = r^{-1} \in \{1^{-1}, 3^{-1}, 5^{-1}, 7^{-1}, 9^{-1}, 11^{-1}\}\). The results presented in the right panel of Figure \ref{Fig:asym_circle} demonstrate that the Hausdorff distance decreases slightly as the curvature diminishes.  


\subsection{Comparison of other manifold fitting methods}
For the circle with radius $r$, we set \(N = 3000\), \(N_0 = 300\), \(r = 5\), and $\sigma = 0.1$. The parameters for each estimator were selected as follows: for ysl23 \cite{yao2023manifold}, we set \(r_0 = r_1 = 5\sigma/\lg(N)\) and \(r_2 =10\sigma\sqrt{\log(1/\sigma)}/\lg(N)\); for yx19 \cite{yao2019manifold}, cf18 \cite{fefferman2018fitting}, and km17 \cite{mohammed2017manifold}, the parameter was chosen as \(r = 2\sqrt{\sigma}\) in accordance with their papers.  
This highlights that the selection of local neighborhood parameters in these methods often requires empirical tuning, whereas our approach is guided by theoretical principles for choosing the truncation radius.

\begin{figure}[ht]
    \centering
    \includegraphics[width = 1\linewidth]{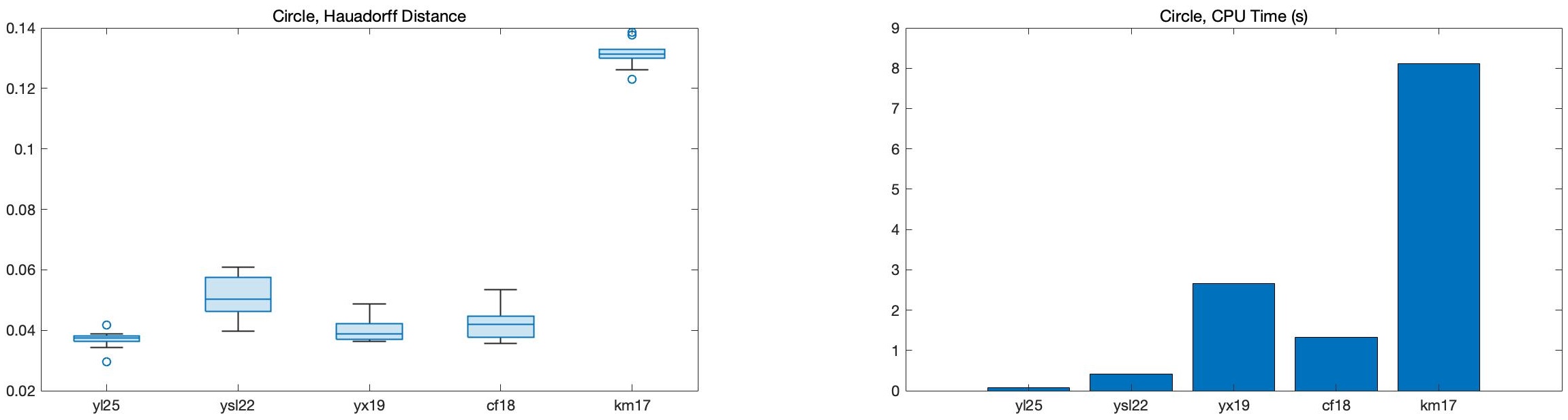}        
    \caption{The Hausdorff distance and CPU time of fitting a circle ($N = 3000$, $r = 5$, $\sigma = 0.1$) using yl25, ysl23, yx19, cf18, and km17. }\label{Fig:box_circle_sphere}
\end{figure}

Each method was executed 10 independent trials, and their performance is summarized in Figure \ref{Fig:box_circle_sphere}. In terms of Hausdorff distance, yl25 and yx19 exhibit slightly superior performance compared to ysl23 and cf18, while all four methods significantly outperform km17. Regarding computational efficiency, yl25 also demonstrates advantage, with substantially lower running times than the other four methods.

\subsection{Fitting of other manifolds}
We present the fitting performance on other non-trivial manifolds. 
For the torus, we set $N=2.5\times10^4$, $N_0=100$ and execute $10$ independent runs. 
The left and middle panels in Figure \ref{Fig:box_com_torus} compare the observed point cloud with the  fitting points obtained by our method yl25, while the right panel depicts the monotonic decay of the Hausdorff distance as the noise level decreases.

\begin{figure}[ht]
    \centering
    \includegraphics[width = 1\linewidth]{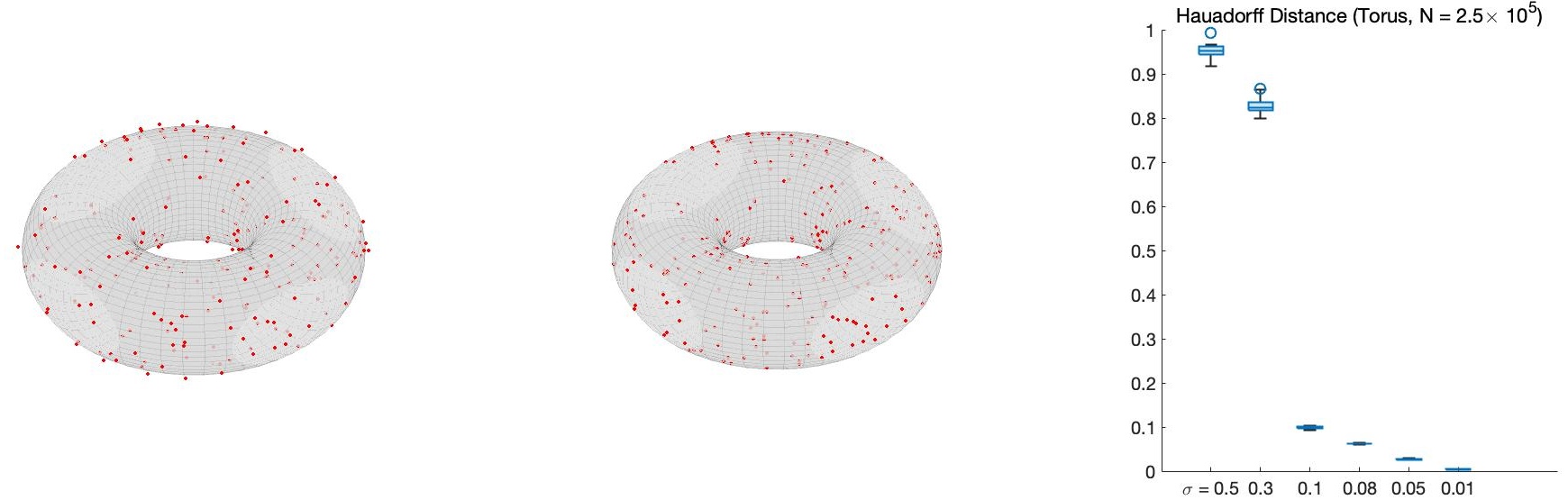}
    \caption{The fitting performance of yl25 on a torus ($N=2.5\times10^4$).}\label{Fig:box_com_torus}
\end{figure}

Analogous results and trends were also observed for the Calabi–Yau manifolds—compact Ricci-flat Kähler manifolds that are of particular interest in theoretical physics \cite{calabi2015kahler}. We consider the Fermat quartic as a concrete example, which reduces to the normalized equation \(x^4 + y^4 = 1, x, y \in \mathbb{C}\) in local coordinates. The resulting 4D surface is visualized in 3D space through dimensionality reduction and a suitable projection. Its projected morphology displayed in Figure \ref{Fig:box_com_CY}.

\begin{figure}[ht]
    \centering
    \includegraphics[width = 1\linewidth]{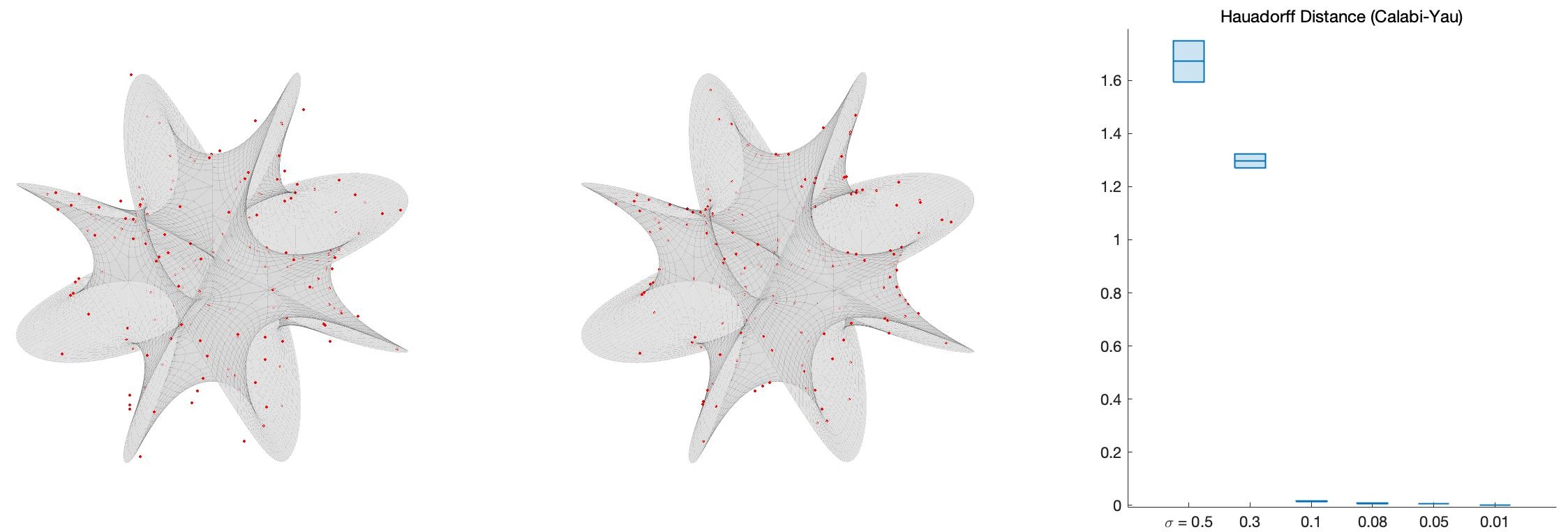}
    \caption{The fitting performance of yl25 on a Calabi–Yau manifold ($N=3\times10^5$).}\label{Fig:box_com_CY}
\end{figure}

Data points were sampled uniformly and generating \(3\times10^5\) samples in \(\mathbb{R}^4\) with added Gaussian noise. As shown in the left panel of Figure \ref{Fig:box_com_CY}, the initial noisy  lies far from the Calabi–Yau manifold, whereas the output of algorithm yl25 produces a substantially closer reconstruction (middle panel). This demonstrates the method's efficacy on complex geometric structures. Furthermore, experiments confirm that the Hausdorff distance decays as the noise level \(\sigma\) decreases. 

Numerical experiments demonstrate that our algorithm provides theoretical guidance for selecting the radius of local averaging in mean shift estimator, leading to significant improvements in both fitting accuracy and computational efficiency. However, the theoretical sample size requirement $N=O(\sigma^{-3d-5})$ increases substantially with the dimension of the underlying manifold $d$, which poses a challenge on high-dimensional latent manifolds.

\section{Conclusion}

This paper introduces an efficient method for fitting a low-dimensional manifold from noisy high-dimensional samples. Under the Gaussian noise addition model, we propose a local mean-shift estimator based on kernel-weighted averaging with a theoretically derived bandwidth \( r = c_D \sigma \). The estimator projects points in a tubular neighborhood of the underlying manifold toward it, achieving a uniform Hausdorff error of order \( O(\sigma^2) \) with high probability. It improves earlier methods relying on tangent space approximations.

Theoretical analysis shows that the leading bias is proportional to the mean curvature vector, a second-order extrinsic geometric quantity, while tangential errors are of higher order \(O(\sigma^3)\). This geometric insight not only refines the error characterization but also reveals a connection to discrete mean curvature flow
, bridging manifold learning with geometric analysis. Practically, the method requires only the noise level \(\sigma\) and avoids iterative schemes or implicit representations, offering clear input–output correspondence and computational efficiency.

Numerical experiments on circles, tori, and Calabi–Yau manifolds confirm the theoretical results that the Hausdorff distance decays quadratically with \(\sigma\). The estimator outperforms existing methods in both accuracy and runtime, and the proposed choice of bandwidth \(r = c_D \sigma\) eliminates heuristic tuning and enhances reproducibility.
However, sample complexity \( N = O(\sigma^{-3d-5}) \) grows rapidly with intrinsic dimension \(d\), posing challenges in very high-dimensional latent manifolds. 

Future work could consider more general distributional models, including non-uniform sampling on the underlying manifold and general unbounded noise. Another promising direction is the study of the discrete mean curvature flow on point cloud models and the analysis of its convergence properties.

In summary, this work provides a principled, geometrically interpretable, and practically effective framework for manifold fitting, with theoretical guarantees that align closely with empirical performance. It advances the state-of-the-art in statistical manifold estimation and offers a foundation for further research in geometric data analysis.

\section{Proofs}

\subsection{Proofs of lemmas}

\begin{proof}[Proof of Lemma ~\ref{lem:exponential map expansion}]
     Recall that $\text{exp}_{x}(u) = \gamma_u(1)$, where $\gamma_u: [0,1] \to \mathcal{M}$ is the  geodesic in $\mathcal{M}$ with initial condition $\gamma_u(0) = x$, $\gamma_u'(0) = u$. Consider the Taylor expansion of $\text{exp}_x$ around the origin of $T_x\mathcal{M}$, it will be written as
    \[
    \text{exp}_x(u) = x + I_x(u) + \frac{1}{2} Q_x(u) + \dots,
    \]
where $I_x$, $Q_x$ are respectively linear and quadratic forms in $T_x\mathcal{M}$. 
Therefore 
\[
\gamma_u(t) = x + I_x(u)t + \frac{1}{2} Q_x(u)t^2 +O(t^3). 
\]
Hence, $I_x(u) = \gamma_u'(0) = u$ and $ Q_x(u) = \gamma_u''(0)$. Since $\gamma$ is a geodesic, we have $\nabla^\mathcal{M}_{\gamma'(0)} \gamma'(0)=0$, which implies $\gamma_u''(0) = \Pi_x(u,u)$. This finishes the proof. 
\end{proof}

\begin{proof}[Proof of Lemma ~\ref{lem:volume element expansion}]
     Since $g_{ij} | _x  = \delta_{ij}$ follows that $D\text{exp}_x$ is the identity map at the origin of $T_x\mathcal{M}$, which implies $\frac{\partial}{\partial{u_i}} = d(\text{exp}_x)_0(u_i) = u_i.$ The geodesic parametrization in the exponential coordinates is given by 
\[
c(t) = t(u^1,u^2,\dots,u^d).
\]
Putting this into the geodesic equation, we get for any $k$, 
\[
0 = \ddot{c^k(t)} + \dot{c^i(t)}\dot{c^j(t)}\Gamma_{ij}^k(\text{exp}(c(t))) = u^iu^j\Gamma_{ij}^k(\text{exp}(c(t))).
\]
Let $t=0$, we have the quadratic form
\[
u^iu^j\Gamma_{ij}^k(x) = 0
\]
for any $u^iu^j$ and symmetric $\Gamma_{ij}^k$. Therefore $\Gamma^k_{ij}(0) = 0$ for all $i,j,k$, which implies $\partial_i g_{jk} + \partial_j g_{ik} - \partial_k g_{ij} = 0 $ at the origin. So $\partial_k g_{ij}|_x = 0$ follows from 
\[
\partial_k g_{ij}|_x = \frac{1}{2}(\partial_k g_{ij} + \partial_j g_{ik} - \partial_i g_{jk}+ \partial_k g_{ij} + \partial_i g_{jk} - \partial_j g_{ik})|_x = 0.
\]
Hence $\sqrt{\text{det} g_{ij}(x)} = 1$ and its first derivative vanish. Taking the Taylor expansion at $x$ we get 
\[
\sqrt{\text{det} g_{ij}(u)} = 1 + O(\|u\|^2).
\]
This finishes the proof.
\end{proof}

\begin{proof}[Proof of Lemma \ref{lem:nonasymp_hoeffding}]
Write
\[
\widehat\mu_n-\widehat\mu_w
= \frac{S_n}{B_n}-\frac{s}{b}
= \frac{S_n-s}{B_n} - \frac{s}{b}\cdot\frac{B_n-b}{B_n}.
\]
Fix the event
\[
\mathcal{E}:=\{|B_n-b|\le b/2\}.
\]
On $\mathcal{E}$ we have $B_n\ge b/2$, hence the elementary bound
\begin{equation}\label{eq:deterministic_bound}
\|\widehat\mu_n-\widehat\mu_w\|
\le \frac{2\|S_n-s\|}{b} + \frac{2\|s\|}{b^2}\,|B_n-b|.
\end{equation}
To enforce $\|\widehat\mu_n-\widehat\mu_w\|\le\varepsilon$ on $\mathcal{E}$ it suffices to require the two deterministic inequalities
\[
\|S_n-s\| \le \frac{b\varepsilon}{4},\qquad
|B_n-b| \le \frac{b^2\varepsilon}{4\|s\|}.
\]
Therefore, by the union bound,
\begin{align}
\bP\big(\|\widehat\mu_n-\widehat\mu_w\| > \varepsilon\big)
&\le \bP(\mathcal{E}^c)
+ \bP\Big(\|S_n-s\| > \frac{b\varepsilon}{4}\Big)
+ \bP\Big(|B_n-b| > \frac{b^2\varepsilon}{4\|s\|}\Big). \label{eq:union_decomp}
\end{align}

We bound the three terms on the right-hand side separately using Hoeffding's inequality (Lemma~\ref{lem:Hoeffding}).

\medskip\noindent\textbf{(i) Bound for }\(\bP(\mathcal{E}^c)=\bP(|B_n-b|>b/2)\).  
Since $0\le W\le M$, apply Hoeffding to the scalar average $B_n=\tfrac1n\sum W_i$ with range length $M$ to obtain
\begin{equation}\label{eq:Bn_tail}
\bP(|B_n-b|>b/2)\ \le\ 2\exp\!\Big(-\frac{n b^2}{2M^2}\Big).
\end{equation}
\medskip\noindent\textbf{(ii) Bound for }\(\bP\big(\|S_n-s\| > b\varepsilon/4\big)\).  
By assumption each coordinate $(W(Y)Y)^{(j)}$ is almost surely within a fixed interval of length at most $2K$ around its mean. Thus, for every coordinate $j$ Hoeffding yields
\[
\bP\Big(\big|(S_n-s)^{(j)}\big| > t\Big)
\le 2\exp\!\Big(-\frac{2n t^2}{(2K)^2}\Big) = 2\exp\!\Big(-\frac{n t^2}{2K^2}\Big).
\]
Applying the union bound over the $D$ coordinates and using $\|v\|\le\sqrt{D}\|v\|_\infty:=\sqrt{D}\max_j|v^{(j)}|$ we obtain, for any $\tau>0$,
\begin{equation*}\label{eq:Sn_tail_general}
\bP\big(\|S_n-s\| > \tau\big)
\le 2D\exp\!\Big(-\frac{n \tau^2}{2D K^2}\Big).
\end{equation*}
Putting $\tau=b\varepsilon/4$ yields
\begin{equation}\label{eq:Sn_tail}
\bP\Big(\|S_n-s\| > \frac{b\varepsilon}{4}\Big)
\le 2D\exp\!\Big(-\frac{n b^2\varepsilon^2}{32 D K^2}\Big).
\end{equation}

\medskip\noindent\textbf{(iii) Bound for }\(\bP\big(|B_n-b| > b^2\varepsilon/(4\|s\|)\big)\).  
Again applying Hoeffding to $B_n$ with threshold $b^2\varepsilon/(4\|s\|)$ yields
\begin{equation}\label{eq:Bn_tail_second}
\bP\Big(|B_n-b| > \frac{b^2\varepsilon}{4\|s\|}\Big)
\le 2\exp\!\Big(-\frac{2n s^2}{M^2}\Big)
= 2\exp\!\Big(-\frac{n b^4\varepsilon^2}{8\|s\|^2 M^2}\Big).
\end{equation}

Combining the three bounds \eqref{eq:Bn_tail}, \eqref{eq:Sn_tail}, \eqref{eq:Bn_tail_second} with \eqref{eq:union_decomp} we obtain the explicit finite-sample tail bound
\begin{equation*}
\bP\big(\|\widehat\mu_n-\widehat\mu_w\| > \varepsilon\big)
\le 2\exp\!\Big(-\frac{n b^2}{2M^2}\Big)
+ 2D\exp\!\Big(-\frac{n b^2\varepsilon^2}{32 D K^2}\Big)
+ 2\exp\!\Big(-\frac{n b^4\varepsilon^2}{8\|s\|^2 M^2}\Big).
\end{equation*}
This completes the proof.
\end{proof}

\subsection{Proofs of Section 3}

\begin{proof}[Proof of Theorem \ref{thm: loc average}]
We expand $p_\sigma(y)$ in a Taylor series around $z$ as
\begin{align*}
     p_\sigma(y)
    &= p_\sigma(z)
    + \sum_{k=1}^D (y^{(k)} - z^{(k)})\,\partial_{k} p_\sigma(z)\\
    &+ \frac{1}{2}\sum_{k,l=1}^D (y^{(k)} - z^{(k)})(y^{(l)} - z^{(l)})\,\partial_{kl}^2 p_\sigma(z)
    + O(\|y-z\|^3).
\end{align*}
Since both the kernel $\varphi_r(y-z)$ and the integration domain $B_D(z,\sqrt{2}r)$ are radially symmetric, all terms that are odd in $(y-z)$ vanish upon integration. Hence,
\begin{align}
   \label{eq:denominator} &\int_{B_D(z,\sqrt{2}r)} \varphi_r(y-z)\,p_\sigma(y)\,dy \nonumber\\
    &= \int_{B_D(z,\sqrt{2}r)} \varphi_r(y-z)
      \Big(p_\sigma(z) + \tfrac{1}{2}|y^{(1)}-z^{(1)}|^2\,\Delta p_\sigma(z) + O(\|y-z\|^4)\Big) dy.
\end{align}
Letting $y = z + \sqrt{2}r\,u$ and using $dy = ( \sqrt{2}r )^D du$, we obtain
\begin{align*}
    &2^{-\frac{D}{2}}\int_{B_D(z,\sqrt{2}r)} \varphi_r(y-z)\,p_\sigma(y)\,dy\\
    &= \!\int_{B_D(0,1)} \varphi_1(\sqrt{2}u)\,
       \Big(p_\sigma(z) + r^2 (u^{(1)})^2 \Delta p_\sigma(z) + O(r^4)\Big) du \\
    &= Ap_\sigma(z) + Br^2 \Delta p_\sigma(z) + O(r^4),
\end{align*}
where
\[
    A = \int_{B_D(0,1)} \varphi_1(\sqrt{2}u)\,du, \qquad
    B = \!\int_{B_D(0,1)}  (u^{(1)})^2\,\varphi_1(\sqrt{2}u)\, du.
\]
Similarly, since $\Delta(y\,p_\sigma(y)) = 2\nabla p_\sigma(y) + y\,\Delta p_\sigma(y)$, we have
\begin{equation}\label{eq:numerator}
\begin{aligned}
    &2^{-\frac{D}{2}}\int_{B_D(z,\sqrt{2}r)} \varphi_r(y-z)\,y\,p_\sigma(y)\,dy\\
    &= Az\,p_\sigma(z)
      + Br^2\big(2\nabla p_\sigma(z) + z\,\Delta p_\sigma(z)\big)
      + O(r^4).
\end{aligned}
\end{equation}
Then \eqref{eq:denominator} and \eqref{eq:numerator} yields
\begin{align*}
    \mu_z 
    &= \frac{Azp_\sigma(z) + Br^2(2\nabla p_\sigma(z) + z\Delta p_\sigma(z)) + O(r^4)}
           {Ap_\sigma(z) + Br^2\Delta p_\sigma(z) + O(r^4)} \\
    &= z + r^2\,\frac{2B\,\nabla p_\sigma(z)}{A\,p_\sigma(z)} + O(r^4),
\end{align*}
which concludes the proof.
\end{proof}

\begin{proof}[Proof of Theorem \ref{th:probability noisy}]
Fix any $\varepsilon\in(0,\tau)$ and $y\in T(\tau-\varepsilon)$. We assume $0<\sigma<\sigma_0$ for some $\sigma_0>0$. Let $x_0=\pi(y)\in \mathcal{M}$, $v:=y-x_0\in T^\bot_{x_0}\mathcal{M}$ so that $\|v\|\le \tau-\varepsilon$. Set
\[
C_\sigma:=\big(\operatorname{Vol}(\mathcal{M})(2\pi\sigma^2)^{D/2}\big)^{-1},\quad 
p_\sigma(y)=C_\sigma\int_\mathcal{M} \exp\!\Big(-\tfrac{1}{2\sigma^2}\|y-x\|^2\Big)\,d\mu(x).
\]
Define the symmetric endomorphism $\langle v,\Pi_{x_0}\rangle$ on $T_{x_0}\mathcal{M}$ via
$\langle \langle v,\Pi_{x_0}\rangle u,w\rangle=\langle v,\Pi_{x_0}(u,w)\rangle$, and set
\(
A:=I-\langle v,\Pi_{x_0}\rangle.
\)
Using $\|\Pi_{x_0}\|_{\mathrm{op}}\le \tau^{-1}$ (Lemma~\ref{lem:second fundamental form bound}) and $\|v\|\le \tau-\varepsilon$, then
\[
\lambda_{\min}(A)\ \ge\ 1-\|v\|/\tau\ \ge\ \varepsilon/\tau\ =:\ c_\varepsilon>0.
\]
Hence, $A$ is uniformly positive definite on $T(\tau-\varepsilon)$ with a spectral gap $c_\varepsilon$. 

In the exponential coordinates at $x_0$, we write $x(u):=\exp_{x_0}(u)$ for $u\in T_{x_0}\mathcal{M}\simeq\mathbb{R}^d$. By Lemma~\ref{lem:injectivity radius}, there exists $r_0>0$ such that $u\mapsto x(u)$ is a diffeomorphism from $B_d(0,r_0)$ onto $E_0:=\exp_{x_0}(B_d(0,r_0))$. We have the following expansion on $B_d(0,r_0)$,
\[
\sqrt{\det g(u)}=1+Q_g(u),\qquad |Q_g(u)|\le C_g\,\|u\|^2, \quad(\text{Lemma}~\ref{lem:volume element expansion})
\]
\[
x(u)=x_0+u+\tfrac12\,\Pi_{x_0}(u,u)+Q_e(u),\qquad \|Q_e(u)\|\le C_e\,\|u\|^3, \quad(\text{Lemma}~\ref{lem:exponential map expansion})
\]
with constants $C_g,C_e$ depending only on $\mathcal{M}$. By the expansion of $x(u)$ we obtain
\[
\|y-x(u)\|^2 = \|v\|^2 + u^\top A u + R(u),\qquad R(u) = O\bigl(\|v\|\,\|u\|^3 + \|u\|^4\bigr),
\]
with constants depending only on $\mathcal{M},\varepsilon$. 
For $r=r_0$ and let
\[
E := \exp_{x_0}\bigl(B_d(0,r)\bigr)\subset \mathcal{M},\qquad
P_\sigma(y) = C_\sigma\,(I_1 + I_2),
\]
where
\[
I_1 := \int_E e^{-\|y-x\|^2/(2\sigma^2)}\,d\mu(x),\qquad
I_2 := \int_{\mathcal{M}\setminus E} e^{-\|y-x\|^2/(2\sigma^2)}\,d\mu(x).
\]

Using the exponential coordinates $x=x(u)$ and the above expansions, we have
\[
I_1
=
\int_{\|u\|\le r}
\exp\Bigl(-\frac{1}{2\sigma^2}\bigl(\|v\|^2 + u^\top A u + R(u)\bigr)\Bigr)
\bigl(1+Q_g(u)\bigr)\,du.
\]
 Change variables $z = u/\sigma$, so that $du=\sigma^d dz$. Using the bounds of  $R(u)=O(\|v\|\|u\|^3+\|u\|^4)$, $Q_g(u)=O(\|u\|^2)$ and the Taylor expansion of the exponential, we obtain
\[
\begin{aligned}
I_1
&=
e^{\frac{-\|v\|^2}{2\sigma^2}}\sigma^d
\int_{\|z\|\le r/\sigma}
e^{-\frac{1}{2} z^\top A z}
\Bigl(1 - \frac{R(\sigma z)}{2\sigma^2} + O\Bigl(\frac{R(\sigma z)^2}{\sigma^4}\Bigr)\Bigr)
\bigl(1+Q_g(\sigma z)\bigr)\,dz \\
&=
e^{\frac{-\|v\|^2}{2\sigma^2}}\sigma^d
\int_{\|z\|\le r/\sigma}
e^{-\frac{1}{2} z^\top A z}
\Bigl(1 + \sigma\|v\|\phi_1(y,z,\sigma) + \sigma^2\phi_2(y,z,\sigma)\Bigr)\,dz,
\end{aligned}
\]
where $\phi_1,\phi_2$ are smooth functions of $(y,z,\sigma)$ and satisfy polynomial bounds of the form
\[
|\phi_i(y,z,\sigma)| \le C\|z\|^m
\]
for some $m$ independent of $(y,\sigma)$. Using $\mathcal{M}$ is a compact $C^3$ submanifold and the tubular neighborhood theorem, all curvature quantities and the projection $\pi(\cdot)$, as well as their derivatives, are uniformly bounded on
$T(\tau - \varepsilon)$. Thus for $|\alpha|\le1$, there exist an integer $m_0$ and constants $C_\alpha>0$ such that
\[
\bigl|\partial_y^\alpha\phi_i(y,z,\sigma)\bigr|
\;\le\;
C_\alpha\,\|z\|^{m_0}.
\]
Since $\lambda_{\min}(A)\ge c_\varepsilon$, the Gaussian tail is exponentially small:
\[
\int_{\|z\|>r/\sigma}e^{-\frac12 z^\top A z}dz
\ \le\ e^{-\frac{c_\varepsilon r^2}{4\sigma^2}}\int_{\mathbb{R}^d}e^{-\frac14 z^\top A z}dz
\ \le C e^{-c/\sigma^2}.
\]
Thus we may extend the integral to all of $\mathbb{R}^d$ with exponentially small error and write
\[
I_1
=
e^{-\|v\|^2/(2\sigma^2)}\sigma^d
\Bigl(
J_0(y) + \sigma\|v\| J_1(y,\sigma) + \sigma^2 J_2(y,\sigma)
\Bigr),
\]
where
\[
J_0(y) := \int_{\mathbb{R}^d} e^{-\frac{1}{2} z^\top A z}\,dz
=
\frac{(2\pi)^{d/2}}{\sqrt{\det A}},
\]
and
\[
J_i(y,\sigma) :=
\int_{\mathbb{R}^d} e^{-\frac{1}{2} z^\top A z}\,\phi_i(y,z,\sigma)\,dz,
\quad i=1,2.
\]
Using the boundedness of $\phi_i(y,z,\sigma)$, for the Gaussian moments in the integral we obtain
\[
\bigl|\partial_y^\alpha\bigl(e^{-\frac12 z^\top A_y z}\phi_i(y,z,\sigma)\bigr)\bigr|
\;\le\;
C_\alpha'\,e^{-\frac{c_\varepsilon}{2}\|z\|^2}\,\|z\|^{m_0},
\]
Hence by dominated convergence we obtain
\[
\sup_{(y,\sigma)\in T(\tau-\varepsilon)\times(0,\sigma_0]}
\bigl|\partial_y^\alpha J_i(y,\sigma)\bigr|
\;\le\; C_\alpha'',
\]
for some constant $C_\alpha''<\infty$. In particular, $J_i$ are smooth in $y$, and all $y$–derivatives are uniformly bounded on $T(\tau-\varepsilon)\times(0,\sigma_0]$.

For $I_2$, on the boundary $\|u\| = r$,  by $\lambda_{\min}(A)\ge c_\varepsilon$ we obtain
\[
\|y-x(u)\|^2=\|v\|^2+u^\top A u+R(u)\ \ge\ \|v\|^2+ \delta_\varepsilon,\qquad
\delta_\varepsilon:=\tfrac{c_\varepsilon}{2}\,r^2=\tfrac{\varepsilon}{2\tau}\,r^2.
\]
Then the set $\{\|y-x\|^2<\|v\|^2+\delta_\varepsilon\}$ is contained in $E$. Hence for  $x\in \mathcal{M}\setminus E$, $\|y-x\|^2\ge \|v\|^2+\delta_\varepsilon$, and 
\[
I_2\le\ \mathrm{Vol}(\mathcal{M})\,\exp\!\Big(-\frac{\|v\|^2+\delta_\varepsilon}{2\sigma^2}\Big)
\ =\ e^{-\frac{\|v\|^2}{2\sigma^2}}\;O\!\big(e^{-\delta_\varepsilon/(2\sigma^2)}\big),
\]
which is exponentially smaller than any power of $\sigma$, the term $I_2$ can be absorbed into the $\sigma^2 J_2$ contribution by adjusting $J_2$.

Finally, combine $I_1$ and $I_2$ together. Let $F_i(y,\sigma) := J_i(y,\sigma)/J_0(y)$ and using $C_\sigma\sigma^d J_0(y)
= (\operatorname{Vol}(\mathcal{M})(2\pi\sigma^2)^{\frac{D-d}{2}})^{-1} \det(A)^{-1/2}$, we obtain
\[
p_\sigma(y)
=
\frac{e^{-\|v\|^2/(2\sigma^2)}}{\operatorname{Vol}(\mathcal{M})(2\pi\sigma^2)^{\frac{D-d}{2}}}
\frac{1}{\sqrt{\det A}}
\Bigl(
1 + \sigma\|v\| F_1(y,\sigma) + \sigma^2 F_2(y,\sigma)
\Bigr),
\]
holds for all $y\in T(\tau-\varepsilon)$ and $0<\sigma\le\sigma_0$. Since $\lambda_{\min}(A)\ge c_\varepsilon$, $J_0(y)$ is bounded away from zero and smooth in $y$, then $F_1,F_2$ with all derivatives are uniformly bounded. 
\end{proof}

\begin{proof}[Proof of Corollary \ref{co:log density}]
Fix any $\varepsilon\in(0,\tau)$ and consider the set $K:=T(\tau-\varepsilon)\times(0,\sigma_0]$.
By Theorem~\ref{th:probability noisy}, we have
\begin{equation}
p_\sigma(y)
=
C_\sigma\,
\frac{e^{-\frac{\|v_y\|^2}{2\sigma^2}}}{\sqrt{\det A_y}}\,
\bigl(1+R_P(y,\sigma)\bigr),\;C_\sigma:=\big(\operatorname{Vol}(\mathcal{M})(2\pi\sigma^2)^{D/2}\big)^{-1},
\label{eq:P-factor}
\end{equation}
where
\begin{equation}
R_P(y,\sigma)
=
\sigma\|v_y\|\,G_1(y,\sigma)
+
\sigma^2 G_2(y,\sigma).
\label{eq:RP-structure}
\end{equation}
Here $G_1,G_2$ are smooth functions of $(y,\sigma)$ on $K$, arising from
Gaussian integrals of smooth kernels in the proof of Theorem~\ref{th:probability noisy}. In particular, there exist constants $C_m>0$ such that for $m= 1,2$,
\begin{equation}
\sup_{(y,\sigma)\in K}
\bigl\|\nabla_y^m G_i(y,\sigma)\bigr\|
\le C_m,
\qquad i=1,2.
\label{eq:Fi-bounded}
\end{equation}
By the tubular neighborhood theorem, the projection
$\pi:T(\tau-\varepsilon)\to \mathcal{M}$ and the normal component $v_y=y-\pi(y)$ are
$C^{2}$ in $y$, then their derivatives are uniformly bounded
on $T(\tau-\varepsilon)$. That is, for $m= 1,2$ there exists $B_m>0$ such that
\begin{equation}
\sup_{y\in T(\tau-\varepsilon)} \bigl\|\nabla_y^m v_y\bigr\|
\le B_m.
\label{eq:v-bounded}
\end{equation}
The same holds for $\|v_y\|$, since it is obtained from $v_y$ by smooth
operators.

We now bound $\nabla_y^m R_P$ using \eqref{eq:Fi-bounded} and \eqref{eq:v-bounded}. For $m=0$, we have
\[
|R_P(y,\sigma)|
\le
\sigma\|v_y\|\bigl|G_1(y,\sigma)\bigr|
+
\sigma^2\bigl|G_2(y,\sigma)\bigr|
\le
C_0\bigl(\sigma\|v_y\|+\sigma^2\bigr),
\]
for some constant $C_0$. For $m= 1$, since each derivative $\nabla_y R_P$ is a finite sum of terms of the form
\[
\sigma\,
\nabla_y^{k_1}\|v_y\|\,
\nabla_y^{k_2}G_1(y,\sigma)
\quad\text{or}\quad
\sigma^2\,\nabla_y^{k}G_2(y,\sigma),
\qquad k_1+k_2\le 1,\ k\le 1,
\]
there exist constants $C_{\varepsilon,1}>0$ depending only on $\varepsilon$ and $\mathcal{M}$ such that
\begin{equation}
\bigl\|\nabla_y R_P(y,\sigma)\bigr\|
\le
C_{\varepsilon,1}\bigl(\sigma\|v_y\|+\sigma^2\bigr).
\label{eq:RP-derivative-bound}
\end{equation}

Taking logarithms in \eqref{eq:P-factor}, we obtain
\[
\log p_\sigma(y)
=
\log C_\sigma
-
\frac{\|v_y\|^2}{2\sigma^2}
-
\frac{1}{2}\log\det A_y
+
\log\bigl(1+R_P(y,\sigma)\bigr).
\]
Let 
\[
R_0(y,\sigma) := \log\bigl(1+R_P(y,\sigma)\bigr).
\]
Shrinking $\sigma_0$ if necessary, we may assume
\[
|R_P(y,\sigma)| \le \frac{1}{2},
\qquad (y,\sigma)\in K.
\]
Then $1+R_P$ is bounded away from zero. For $m=0$, simple inequality yields
\[
|R_0(y,\sigma)|
=
\bigl|\log(1+R_P(y,\sigma))\bigr|
\le
2\,|R_P(y,\sigma)|
\le
2C_{\varepsilon,0}\bigl(\sigma\|v_y\|+\sigma^2\bigr).
\]
For $m= 1$, derivative $\nabla_y R_0(y,\sigma)$ is a finite sum of terms of the
form
\[
c_{\ell}
\partial_y^{l}R_P(y,\sigma),
\]
where $c_{\ell}$ are smooth functions involving derivatives of logarithm. 

By \eqref{eq:RP-derivative-bound} we obtain
\[
\bigl\|\nabla_y R_0(y,\sigma)\bigr\|
\le
C_{\varepsilon,1}'\bigl(\sigma\|v_y\|+\sigma^2\bigr),
\]
for some constants $C_{\varepsilon,1}'$ depending only on $\varepsilon$ and $\mathcal{M}$.
\end{proof}

\begin{proof}[Proof of Lemma \ref{lem:derivative of v pi}]
   For any $y \in \mathcal T(\tau)$, write $x := \pi(y) \in \mathcal{M}$ and $v := v_y = y - x \in
T_x^\perp \mathcal{M}$. Let $S_v : T_x\mathcal{M} \to T_x\mathcal{M}$ denote the shape operator associated
with the normal vector $v$, defined by 
\[
\langle S_v (u), w \rangle = \langle \Pi_x(u,w), v \rangle, \qquad S_v(u) = -(\nabla_u v)^\top,
\]
for $u,w\in T_x\mathcal{M}$. Then we have $A_y = I_d - \langle v_y,\Pi_{x}\rangle = I_d - S_v$ on $T_x\mathcal{M}$.

Let $\omega \in \mathbb{R}^D$ be arbitrary, and choose a smooth curve $\gamma : (-\delta,\delta) \to T(\tau)$ such that $\gamma(0) = y$ and $\gamma'(0) = \omega$. Set
\[
\pi(t) := \pi(\gamma(t)) \in \mathcal{M}, \qquad v(t) := v_{\gamma(t)} = \gamma(t) - \pi(t)
\in T_{\pi(t)}^\perp \mathcal{M}.
\]
Then for any tangent vector field $Y(t) \in T_{\pi(t)}\mathcal{M}$ along $\pi(t)$ we have
\[
\langle v(t), Y(t) \rangle = 0, \qquad \forall t.
\]
Differentiating this identity with respect to $t$ and evaluating at $t=0$, we obtain
\begin{equation}\label{eq:parametrized_derivative} 
0 = \frac{d}{dt}\langle v(t), Y(t)\rangle\big|_{t=0}
= \langle v'(0), Y(0)\rangle + \langle v(0), \nabla_{\pi'(0)} Y\rangle,
\end{equation}
where $v'(0) = \nabla_\omega v$ and $\pi'(0) = \nabla_\omega \pi(y) \in T_{\pi(y)}\mathcal{M}$.
Using the definition of the shape operator (and extending $v$ locally if
necessary), we have
\[
\langle v(0), \nabla_{\pi'(0)} Y \rangle=-\langle (\nabla_{\pi'(0)}v)^\top, Y(0) \rangle
= - \langle S_v(\pi'(0)), Y(0) \rangle.
\]
Substituting this into (\ref{eq:parametrized_derivative}) gives
\[
\langle v'(0), Y(0)\rangle = - \langle S_{v}(\pi'(0)), Y(0)\rangle,
\]
which implies that the tangential component of $\nabla_\omega v$ satisfies
\begin{equation}\label{eq:tangent_derivative}
(\nabla_\omega v)^\top = - S_v(\nabla_\omega \pi(y)).
\end{equation}
On the other hand, by the definition $v(t) = \gamma(t) - \pi(t)$, 
\begin{equation}\label{eq:basic_derivative}
\nabla_\omega v = \nabla_\omega(\gamma - \pi) = \omega - \nabla_\omega \pi(y).
\end{equation}
Taking the tangential component of \eqref{eq:basic_derivative} and combining it with \eqref{eq:tangent_derivative} yields
\[
\omega^\top - \nabla_\omega \pi(y) = - S_v(\nabla_\omega \pi(y)).
\]
Hence,
\begin{equation}\label{eq:main_relation}
\omega^\top = (I_d - S_v)\big(\nabla_\omega \pi(y)\big).
\end{equation}
By Lemma~\ref{lem:second fundamental form bound} and the reach assumption, we have $\|S_v\|_{\mathrm{op}} \le \| \Pi_x\|_{\mathrm{op}} \|v\| \le \tau^{-1}\|v\| < 1$ for $y \in \mathcal T(\tau)$, hence $A_y = I - S_v$ is invertible on $T_x\mathcal{M}$. Therefore
\[
\nabla_\omega \pi(y) = A_y^{-1} \omega^\top.
\]
Finally, substituting this back into \eqref{eq:basic_derivative}, we obtain
\[
\nabla_\omega v_y = \omega - A_y^{-1}\omega^\top.
\]
\end{proof}

\begin{proof}[Proof of Theorem \ref{th:score_noisy}]
For any $\varepsilon \in (0,\tau)$, let $\sigma_0 > 0$ be as in Corollary~\ref{co:log density}.
For $y \in T(\tau-\varepsilon)$ and $0 < \sigma \le \sigma_0$, Corollary~\ref{co:log density} gives the expansion
\begin{equation}\label{eq:logP-expansion}
\log p_\sigma(y)
= C_\sigma - \frac{\|v_y\|^2}{2\sigma^2}
  - \frac{1}{2}\log\det A_y + R(y,\sigma),
\end{equation}
where $C_\sigma = \log\big( \operatorname{Vol}(\mathcal{M}) (2\pi\sigma^2)^{(D-d)/2}\big)^{-1}$ and the remainder $R$ satisfies
\[
\|\nabla_y^m R(y,\sigma)\|_{\mathrm{op}}
\le C_{\varepsilon,m} \bigl( \sigma \|v_y\| + \sigma^2\bigr),\quad m = 0,1.
\]
Taking the ambient gradient of \eqref{eq:logP-expansion} yields
\begin{equation}\label{eq:score-decomposition}
\nabla \log p_\sigma(y)
= - \frac{1}{2\sigma^2}\nabla\|v_y\|^2
  - \frac{1}{2}\nabla \log\det A_y + \nabla R(y,\sigma).
\end{equation}
Using Lemma~\ref{lem:derivative of v pi}, for any $\omega \in \mathbb{R}^D$ we have
\[
\nabla_\omega v_y = \omega - A_y^{-1}\omega^\top.
\]
Since $v_y \in T_{\pi(y)}^\perp \mathcal{M}$ and $A_y^{-1}\omega^\top \in T_{\pi(y)}\mathcal{M}$, we obtain
\[
D_\omega \|v_y\|^2
= 2\langle v_y, \nabla_\omega v_y \rangle
= 2\langle v_y, \omega \rangle.
\]
By the definition of the gradient, this implies
\[
\nabla \|v_y\|^2 = 2 v_y.
\]
Consequently, the first term in \eqref{eq:score-decomposition} is exactly
\[
- \frac{1}{2\sigma^2}\nabla\|v_y\|^2 = - \frac{v_y}{\sigma^2}.
\]
Define
\[
G(y) := -\frac{1}{2}\log\det A_y, \qquad y \in T(\tau-\varepsilon).
\]
For $x \in \mathcal{M}$ we have $v_x = 0$ and $A_x = I_d$, so $\det A_x = 1$ and
$G(x) = 0$. Let $u \in T_x^\perp \mathcal{M}$ with $\|u\| = 1$ and consider the curve
\[
\gamma(t) := x + t u, \qquad |t| \ll 1.
\]
For $|t|$ small, $\gamma(t) \in T(\tau-\varepsilon)$, $\pi(\gamma(t)) = x$ and
$v_{\gamma(t)} = t u$. Let $S_u$ be the shape operator defined at $x$ associated with $u$. By linearity of the shape operator in the normal argument, we have
\[
A_{\gamma(t)} = I_d - \langle t u, \Pi_x \rangle
= I_d - t\, S_u,
\]
Therefore
\[
G(\gamma(t)) = -\frac{1}{2}\log\det\bigl(I_d - t S_u\bigr).
\]
Differentiating at $t=0$ gives
\[
\frac{d}{dt} G(\gamma(t))\Big|_{t=0}
= -\frac{1}{2}\,\mathrm{Tr}(-S_u)
= \frac{1}{2}\mathrm{Tr}(S_u).
\]
For mean curvature $H_x = \frac{1}{d}\mathrm{Tr}(\Pi_x)$, we have $\mathrm{Tr}(S_u) = d \langle H_x, u\rangle$. Hence
\[
\frac{d}{dt} G(\gamma(t))\Big|_{t=0}
= \frac{d}{2} \langle H_x, u\rangle,
\]
which shows that
\[
\nabla G(x) = \frac{d}{2} H_x \in T_x^\perp \mathcal{M}.
\]
Now let $y \in T(\tau-\varepsilon)$. According to Lemma~\ref{lem:Lip-derivative}, we have 
\[
\|\nabla G(y) - \nabla G(\pi(y))\| \le C \|v_{y}\|
\]
for some constant $C > 0$ depending only on $\mathcal{M}$ and $\varepsilon$. Therefore
\[
-\frac{1}{2}\nabla \log\det A_y
= \nabla G(y)
= \frac{d}{2} H_{\pi(y)} + O(\|v_y\|).
\]
According to the regularity of $A_y$, $\nabla F(y)$ is bounded uniformly on $T(\tau-\varepsilon)$.

by Corollary~\ref{co:log density}
\[
\|\nabla R(y,\sigma)\|
\le C_{\varepsilon,1}\bigl(\sigma \|v_y\| + \sigma^2\bigr).
\]
Substituting the above gradients into \eqref{eq:score-decomposition}, we obtain
\[
\nabla \log p_\sigma(y)
= - \frac{v_y}{\sigma^2}
  + \frac{d}{2}H_{\pi(y)}
  + O(\|v_y\| +\sigma \|v_y\| + \sigma^2),
\]
where the $O(\cdot)$ term is uniform in $y \in T(\tau-\varepsilon)$ and $0 < \sigma \le \sigma_0$. 
\end{proof}

\begin{proof}[Proof of Theorem \ref{thm:Population-level output manifold}]
For any fix $\varepsilon \in (0,\tau)$, let $\sigma_0 > 0$ be as in Theorem~\ref{th:score_noisy}.
For $0 < \sigma \le \sigma_0$,
by Theorem~\ref{thm: loc average} with $r^2=\tfrac{A}{2B}\sigma^2=c_D^2\sigma^2$, we may estimate the population mapping $\mu_z$
at each $z\in T(\tau-\varepsilon)$ as
\begin{align*}
\mu_z
&=\frac{\mathbb E[\varphi_r(Y-z)Y]}{\mathbb E[\varphi_r(Y-z)]}
=\frac{\int_{B_D(z,\sqrt{2}r)} \varphi_\sigma(y-z)\,y\,p_\sigma(y)\,dy}
       {\int_{B_D(z,\sqrt{2}r)} \varphi_\sigma(y-z)\,p_\sigma(y)\,dy}\\
&=z+\sigma^2 \nabla\log p_\sigma(z)+O(\sigma^4)\\
&=z-v_z+\sigma^2\frac{d}{2}H_{\pi(z)}+\sigma^2O(\|v_z\|+\sigma\|v_z\|+\sigma^2)\\
&=\pi(z)+\sigma^2\frac{d}{2}H_{\pi(z)}+O(\sigma^2).
\end{align*}

Hence $\mu(T(\tau-\varepsilon))$ is contained in a tubular neighborhood of $\mathcal M$ 
with radius $C\sigma^2$ for some constant $C$, that is, $\mu(T(\tau-\varepsilon))\subset T(C\sigma^2)$ and
\[
\sup_{z\in T(\tau-\varepsilon)} d(\mu_z,\mathcal{M}) \le C\sigma^2.
\]
In contrast, for every $x\in\mathcal{M}$ there exists $z=x\in T(\tau-\varepsilon)$ such that $\mu_z=x+O(\sigma^2)v\in T_x^\perp \mathcal{M}$, which implies that
\[
\sup_{x\in\mathcal{M}} d(x,\mu(T(\tau-\varepsilon)))\le C\sigma^2.
\]
Then $\mu(T(\tau-\varepsilon))$ is a smooth submanifold satisfying
\[
\operatorname{dist}_H(\mu(T(\tau)),\mathcal M)\le C\sigma^2.
\]
This completes the proof.
\end{proof}

\subsection{Proofs of Section 4}

\begin{proof}[Proof of Theorem \ref{thm:Concentrate_with_n}]
    For any $z\in \Gamma$, suppose $\sigma\in(0,\sigma_0]$. By Lemma \ref{lem:nonasymp_hoeffding}, let $W(y)=\varphi_r(y-z)$, then $M=O(r^{-D})=O(\sigma^{-D})$ and $K:=2rM=O(\sigma^{1-D})$. 
   Using the argument of Theorem \ref{thm: loc average}, we obtain 
    \begin{align*}
    b&=\int_{B_D(z,\sqrt{2}r)} \varphi_r(y-z)\,p_\sigma(y)\,dy\\
    &= \!\int_{B_D(0,1)} \!\varphi_1(\sqrt{2}u)\,
       \Big(p_\sigma(z) + r^2 (u^{(1)})^2 \Delta p_\sigma(z) + O(r^4)\Big) 2^{\frac{D}{2}}du \\
    &=Cp_\sigma(z)  + O(r^2),
\end{align*}
where $C =  2^{\frac{D}{2}}\int_{B_D(0,1)} \varphi_1(\sqrt{2}u)\,du$.
Similarly, 
\begin{equation}
    s=\int_{B_D(z,\sqrt{2}r)} \varphi_r(y-z)\,y\,p_\sigma(y)\,dy
    = Cz\,p_\sigma(z)+ O(r^2).
\end{equation}
 Recall that in Theorem \ref{th:probability noisy} we have
\begin{equation*}
 p_\sigma(y)
    = \frac{1}{\operatorname{Vol}(\mathcal{M}) (2\pi\sigma^2)^{\frac{D-d}{2}}}
    e^{-\frac{\|v_y\|^2}{2\sigma^2}}
    \frac{1}{\sqrt{\det A_y}}
      \left(1 + O(\sigma\|v_y\| + \sigma^2)\right).
\end{equation*}
Since  $r=O(\sigma)$ and $d(z,\mathcal{M})\le O(\sigma)$, we obtain 
$b=O(\sigma^{-D+d})$ and $s\le O(\sigma^{1-D+d})$. 

By Lemma \ref{lem:nonasymp_hoeffding},
set $\varepsilon=c\,\sigma^{3}$ and choose
$n=C_0\,\sigma^{-2d-5}$.
With this choice, the exponential terms in \eqref{eq:combined_tail} have exponents
\[
-\frac{n b^2}{2M^2}=-\frac{O(\sigma^{-2d-5})O(\sigma^{-2D+2d)})}{O(\sigma^{-2D})}=-O(\sigma^{-5}),
\]
\[
-\frac{n b^2\varepsilon^2}{32 D K^2}
= -\frac{O(\sigma^{-2d-5})O(\sigma^{-2D+2d)})O(\sigma^{6})}{O(\sigma^{2-2D})}
= -O(\sigma^{-1}),
\]
\[
-\frac{n b^4\varepsilon^2}{8\|s\|^2 M^2}
\le -\frac{O(\sigma^{-2d-5})O(\sigma^{-4D+4d)})O(\sigma^{6})}{O(\sigma^{2-2D+2d})O(\sigma^{-2D})}
= -O(\sigma^{-1}).
\]
Hence there exist constants $C_1$, $C_2$ such that 
\[
\bP\big(\|\widehat\mu_n-\widehat\mu_w\| > \varepsilon\big)
\le C_1\exp({-C_2\sigma^{-1}}),
\]
where $\widehat\mu_n = \frac{\sum_{i=1}^n W(y_i)y_i}{\sum_{i=1}^n W(y_i)}=F(z)$ and $\widehat\mu_w=\frac{\mathbb{E}[W(Y)Y]}{\mathbb{E}[W(Y)]}=\mu_z$,
which yields the stated bound
\[
\bP\big(\|F(z)-\mu_z\| \le c\,\sigma^{3}\big)
\ge 1 - C_1\exp({-C_2\sigma^{-1}}),
\]
provided $n\ge C_0\,\sigma^{-2d-5}$. This completes the proof.
\end{proof}

\begin{proof}[Proof of Lemma \ref{Lemma:prob_in_a_ball}]
Recall the local expansion for any $y\in T(\tau-\varepsilon)$ in Theorem~\ref{th:probability noisy}
\begin{equation}\label{eq:local_expansion-lem4.2}
p_\sigma(y)
    = \frac{1}{\operatorname{Vol}(\mathcal{M}) (2\pi\sigma^2)^{\frac{D-d}{2}}}
    e^{-\frac{\|v_y\|^2}{2\sigma^2}}
    \frac{1}{\sqrt{\det A_y}}
      \left(1 + O(\sigma\|v_y\| + \sigma^2)\right).
\end{equation}

For $z\in \Gamma \subset T(\tau-\varepsilon)$ and around its projection $\pi(z)\in\mathcal{M}$, introduce coordinates $(u,n)$ where $u\in\mathbb{R}^d$ are local tangent coordinates on $\mathcal{M}$ and $n\in\mathbb{R}^{D-d}$ represent normal coordinates. 
The corresponding volume element satisfies $dy = J(u,n)\,du\,dn$.
Then
\begin{equation}\label{eq:ball_coord}
\begin{aligned}
\mathbb{P}(Y\in B_D(z,\sqrt{2}r))
&= \int_{B_D(z,\sqrt{2}r)} p_\sigma(y)\,dy\\
&= \int_{\|u-u_z\|\le r'}\!\!\!\int_{\|n-n_z\|\le r''(u)}\!\!\!
p_\sigma(u,n)\,J(u,n)\,dn\,du
\end{aligned}
\end{equation}
where $(u_z,n_z)$ are the coordinates of $z$, and $r',r''(u)$ are quantities comparable to $r$ up to geometric constants.

By~\eqref{eq:local_expansion-lem4.2}, the dominant dependence on $n$ is Gaussian:
\[
(2\pi\sigma^2)^{-(D-d)/2}\,e^{-\|n\|^2/(2\sigma^2)}.
\]
Since $r=O(\sigma)$, the integration in the normal direction over $\|n-n_z\|\le c\sigma$ captures a fixed fraction of the total Gaussian mass.
Hence, 
\begin{align}\label{eq:normal_integral}
c_0\,e^{-\|n_z\|^2/(2\sigma^2)}
\le
\int_{\|n-n_z\|\le c\sigma} (2\pi\sigma^2)^{-(D-d)/2}
 e^{-\|n\|^2/(2\sigma^2)}\,dv.
\end{align}

Substituting~\eqref{eq:normal_integral} into~\eqref{eq:ball_coord}, 
and using that $J(u,n)=1+O(r)$ and 
$\frac{1}{\sqrt{\det A_{\pi(y)}}} = \frac{1}{\sqrt{\det A_{\pi(z)}}} + O(r)$,
we obtain
\[
\mathbb{P}(Y\in B_D(z,\sqrt{2}r))
\ge \frac{c_0e^{-\|n_z\|^2/(2\sigma^2)}}{\text{Vol}(\mathcal{M})\sqrt{\det A_{\pi(z)}}}
   \Big(1+O(r)\Big)\!\!\int_{\|u-u_z\|\le r'} du,
\]
where $c_0$ is the constant from~\eqref{eq:normal_integral}.  
The tangential integral corresponds to the $d$-dimensional Euclidean volume of a ball of radius $\simeq r$, 
that is, $\omega_d r^d$.

Collecting the estimates, we arrive at
\begin{equation}
\mathbb{P}(Y\in B_D(z,\sqrt{2}r))
\ge \frac{\omega_d\,c_0e^{-\|n_z\|^2/(2\sigma^2)}}{\text{Vol}(\mathcal{M})\sqrt{\det A_{\pi(z)}}}
  \,r^d\,(1+O(r)).
\end{equation}
This completes the proof.
\end{proof}

\begin{proof}[Proof of Corollary \ref{Col:Local_sample_size}]
     The number of points $n$ can be viewed as a binomial random variable with size $N$ and probability parameter $p \ge c\sigma^d$. Then by the Chernoff bound (Lemma~\ref{lem:Chernoff bound}),
     $$\bP(n\ge C_1 \sigma^{-5-2d})\geq 1 - 2\exp({ -C_3\sigma^{-5-2d}}).$$
    When $\sigma$ is sufficiently small, the probability will be close to $1$.
\end{proof}

\subsection{Proofs of Section 2}

\begin{proof}[Proof of Theorem \ref{thm:main}]
   By the argument in Theorem \ref{thm:Population-level output manifold}, we may estimate the population mapping $\mu_z$ at each $z\in \Gamma$ as 
\begin{align*}
\mu_z=\frac {\bE(\varphi_r(Y-z)Y)}{\bE(\varphi_r(Y-z))}&=\pi(z)+\frac{d}{2}H_{\pi(z)}\sigma^2+\sigma^2O(\|v_z\|+\sigma\|v_z\|+\sigma^2)\\
&=\pi(z)+\frac{d}{2}H_{\pi(z)}\sigma^2+O(\sigma^3). 
\end{align*}
By Theorem \ref{thm:Concentrate_with_n} and Corollary \ref{Col:Local_sample_size}, for the sample size $N \ge O(\sigma^{-3d-5})$, we prove the equivalent property between $F(z)$ and $\mu_z$, that is,
$$\bP(\|F(z) - \mu_z\|\leq c\sigma^{3}) \geq 1 - C_1\exp\left(-C_2 \sigma^{-1}\right).$$
Hence,     
$$F(z)=\pi(z)+ \frac{d}{2}H_{\pi(z)}\sigma^{2}+O(\sigma^3).$$
This finishes the proof.
\end{proof}

\begin{proof}[Proof of Corollary \ref{cor:uniform estimate}]
    Theorem~\ref{thm:main} ensures that for each fixed $z \in \mathcal Y^o$, the event
    \[E_z:=\{F(z) = \pi(z) + \tfrac{d}{2} H_{\pi(z)} \sigma^2 + O(\sigma^3)\}\]
    holds with probability at least $1 - C_1 \exp({-C_2 \sigma^{-1}})$. Then the joint event
    \[E := \bigcap_{\substack{z \in \mathcal Y^o}} E_z\]
    holds with probability at least
    \[
   P(E) = 1 - P\left(\bigcup_{z \in \mathcal Y^o} E_z^c\right) \ge 1 - \sum_{z \in \mathcal Y^o} P(E_z^c) \ge 1 - n C_1 e^{-C_2 \sigma^{-1}}.
   \]
   Note that
    $$1 - nC_1 e^{-C_2 \sigma^{-1}}\ge 1 - C_1' \exp(-C_2' \sigma^{-c'})$$
    after absorbing the polynomial factor $n\le N=O(\sigma^{-3d-5})$ into the constants $c', C_1', C_2'$.
\end{proof}

\begin{proof}[Proof of Lemma \ref{lem:reach of perturbed manifold}]
Recall Federer's criterion (Lemma \ref{Lemma:ReachCond}) for the reach, without loss of generality, we may assume
\[
0<\tau^{-1} = \sup\left\{ \frac{2 \cdot d(b, T_a\mathcal{M})}{\|a - b\|^2} \;\middle|\; a, b \in \mathcal{M}, a \neq b \right\}.
\]

Let $p, q \in \widehat{\mathcal{M}}$ with $p = x + f(x)$, $q = y + f(y)$ for $x, y \in \mathcal{M}$. Define $\Delta = y - x$ and decompose it as $\Delta = \Delta^\top + \Delta^\perp$, where $\Delta^\top \in T_x \mathcal{M}$ and $\Delta^\perp \in T^{\perp}_x \mathcal{M}$. Note that $\|\Delta^\perp\| = d(y, T_x\mathcal{M})$.

The tangent space of $\widehat{\mathcal{M}}$ at $p$ is given by
\[
T_p \widehat{\mathcal{M}} = \{ x + f(x) + v + Df_x(v) \mid v \in T_x \mathcal{M} \}.
\]
We now estimate the distance from $q$ to $T_p \widehat{\mathcal{M}}$ by
\[
d(q, T_p \widehat{\mathcal{M}}) = \min_{v \in T_x \mathcal{M}} \|\Delta + f(y) - f(x) - v - Df_x(v)\|.
\]
Choosing $v = \Delta^\top$, we obtain
\[
d(q, T_p \widehat{\mathcal{M}}) \le \|\Delta^\perp + f(y) - f(x) - Df_x(\Delta^\top)\|.
\]
Using Taylor expansion for $f\in C^1$, we obtain
\[
f(y) - f(x) = Df_x(\Delta) + O(\|\Delta\|^2) = Df_x(\Delta^\top) + Df_x(\Delta^\perp) + O(\|\Delta\|^2).
\]
Substituting into the distance expression
\[
d(q, T_p \widehat{\mathcal{M}}) \le \|\Delta^\perp + Df_x(\Delta^\perp) + O(\|\Delta\|^2)\|.
\]

Since $f(x) = \frac{d}{2} H_x \sigma^2 + O(\sigma^3)$ and $\mathcal{M}$ is compact, we have
$\|Df_x\| = O(\sigma^2)$ uniformly on $\mathcal{M}$. Then $\|Df_x(\Delta^\perp)\| \leq O(\sigma^2) \cdot \|\Delta^\perp\|$.
   
By Federer's criterion (Lemma \ref{Lemma:ReachCond}) applied to $\mathcal{M}$, we have
$C^{-1}\|\Delta^\perp\|\le \|\Delta\|^2 \leq C\|\Delta^\perp\|$ for some $C>0$.

Combining these estimates, we obtain
\[
d(q, T_p \widehat{\mathcal{M}}) \leq \|\Delta^\perp\|(1 + C_1\sigma^2 + C_2),
\]
for constants $C_1, C_2 > 0$ depending on $\mathcal{M}$.

Now consider the distance between $p$ and $q$ as
\[
\|p - q\| = \|\Delta + f(y) - f(x)\| = \|\Delta\|(1 + O(\sigma^2)),
\]
since $\|f(y) - f(x)\| = O(\sigma^2\|\Delta\|)$.

Applying Federer's criterion to $\widehat{\mathcal{M}}$, we obtain
\[
\frac{\|p - q\|^2}{2 \cdot d(q, T_p \widehat{\mathcal{M}})} \ge \frac{\|\Delta\|^2(1 + O(\sigma^2))}{2\|\Delta^\perp\|(1 + O(1))} = \frac{\|\Delta\|^2}{2\|\Delta^\perp\|} \cdot \frac{1 + O(\sigma^2)}{1 + O(1)}.
\]

Since $\frac{\|\Delta\|^2}{2\|\Delta^\perp\|} \geq \tau$ and $\frac{1 + O(\sigma^2)}{1 + O(1)} \geq 1 - C\sigma^2$ for sufficiently small $\sigma$, we conclude
\[
\frac{\|p - q\|^2}{2 \cdot d(q, T_p \widehat{\mathcal{M}})} \geq \tau(1 - C\sigma^2).
\]

Taking the infimum over all $p \neq q \in \widehat{\mathcal{M}}$ completes the proof.
\end{proof}

\begin{proof}[Proof of Theorem \ref{thm:sample-level output manifold}]
Since $\Gamma$ is a $D$-dimensional tubular neighborhood of $\mathcal{M}$ with radius $C\sigma$, then it admits a finite covering by Euclidean balls
\[
\Gamma \subset \bigcup_{j=1}^M B_D(z_j, \rho),
\]
where $\rho = c_0 \sigma^3$ for some constant $c_0>0$, and the covering number satisfies $M \le C_{\mathrm{cov}} \rho^{-D} = O(\sigma^{-3D})$.

By Corollary~\ref{cor:uniform estimate}, for each center $z_j\in \Gamma$ one has the local expansion
\[
F(z_j) = \pi(z_j) + \tfrac{d}{2} H_{\pi(z_j)} \sigma^2 + O(\sigma^3)
\]
with probability at least $1 - C_1\exp(-C_2\sigma^{-1})$, where $\pi(z_j)$ denotes the projection of $z_j$ onto $\mathcal{M}$ and $H_{\pi(z_j)}$ is the mean curvature vector at $\pi(z_j)$.  
Applying a union bound over all centers and noting that $M\le O(\sigma^{-3D})$ grows only polynomially in $\sigma^{-1}$, we obtain modified constants $C'_1,C'_2,c'>0$ such that
\[
\bP\!\left(
  \forall j \le M, \;
  \|F(z_j) - \pi(z_j) - \tfrac{d}{2}H_{\pi(z_j)}\sigma^2\| \le C\sigma^3
\right)
\ge 1 - C'_1 \exp(-C'_2 \sigma^{-c'}).
\]

Next, we extend this uniform bound from centers $\{z_j\}$ to all $z \in \Gamma$.  
Since $F$ is $C^{\infty}$ on the compact set $\Gamma$, then $F$ is Lipschitz continuous with a constant $L_F $.  
Similarly, both the projection map $\pi$ and the mean curvature field $H_{\pi(\cdot)}$ are $C^1$ on $\Gamma$, implying Lipschitz continuity with constants $L_\pi, L_H$.  
Therefore, for any $z,z'\in\Gamma$ with $\|z-z'\|\le\rho$, we have
\[
\|F(z)-F(z')\| = O(\rho), \quad
\|\pi(z)-\pi(z')\| = O(\rho), \quad
\|H_{\pi(z)}-H_{\pi(z')}\| = O(\rho).
\]
Combining these with the expansion at the centers yields
\[
F(z) = \pi(z) + \tfrac{d}{2} H_{\pi(z)}\sigma^2 + O(\sigma^3),
\quad \text{uniformly for all } z\in\Gamma,
\]
with probability at least $1 - C'_1 \exp(-C'_2 \sigma^{-c'})$. Then the image $F(\Gamma)$ is a smooth $d$-dimensional submanifold of $\mathbb{R}^D$ with high probabilities. 
Since $\|F(z)-\pi(z)\| \le C\sigma^2$ uniformly, we obtain
\[
\sup_{z\in\Gamma} d(F(z),\mathcal{M}) \le C\sigma^2.
\]
In contrast, for every $x\in\mathcal{M}$ there exists $z=x+O(\sigma)v_x\in T_x^\perp \mathcal{M}\subset\Gamma$ such that $F(z)=x+O(\sigma^2)v_x$, which implies that
\[
\sup_{x\in\mathcal{M}} d(x,F(\Gamma)) \le C\sigma^2.
\]
Combining the two bounds establishes
\[
\operatorname{dist}_H(F(\Gamma),\mathcal{M}) \le C\sigma^2
\]
with probability at least $1 - C'_1 \exp(-C'_2 \sigma^{-c'})$.
Moreover, by the argument of Lemma \ref{lem:reach of perturbed manifold}, the reach of $F(\Gamma)$ satisfies 
\[\tau_F\ge \tau(1 - C\sigma^2)>0\]
for some constant $C>0$. 
\newline

\textbf{Acknowledgments.} We are grateful to Junhao Chen for helpful discussions regarding the proofs of Theorems 3.3 and 3.5. Z.Y. has been supported by the	
Singapore Ministry of Education Tier 2 grant A-8001562-00-00 and the Tier 1 grant (A-8000987-00-00 and A-8002931-00-00) at the National University of Singapore; R.L. is a research fellow supported by Tier 2 grant A-8001562-00-00.

\end{proof}

\bibliographystyle{plain}
\bibliography{reference_mf}

\end{document}